\definecolor{niceblue}{rgb}{0.0,0.19,0.56}
\definecolor{BrickRed}{rgb}{0.80,0.25,0.33}
\definecolor{PineGreen}{rgb}{0.00,0.47,0.44}
\newcommand{\R}{\mathbb{R}}
\newcommand{\eqdef}{\stackrel{\text{def}}{=}}
\def\<#1,#2>{\left\langle #1,#2\right\rangle}
\theoremstyle{plain}  %Makes text in theorems italic
\newtheorem{lemma}{Lemma}[section]
\newtheorem{theorem}{Theorem}[section]
\newtheorem{definition}{Definition}[section]
\newtheorem{corollary}{Corollary}[section]
\newcommand{\circledOne}{\text{\ding{172}}}
\newcommand{\circledTwo}{\text{\ding{173}}}
\newcommand{\circledThree}{\text{\ding{174}}}
\newcommand{\circledFour}{\text{\ding{175}}}
\newcommand{\circledFive}{\text{\ding{176}}}
\newcommand{\cM}{{\cal M}}
\newcommand{\cN}{{\cal N}}
\newcommand{\cO}{{\cal O}}
\newcommand{\mA}{{\bf A}}
\newcommand{\EE}{\mathbf{E}}
\def\R{\mathbb{R}}
\def\R{\mathbb R}
\def\EE{\mathbb E}
\def\PP{\mathbb P}
\def\la{\langle}
\def\ra{\rangle}
\def\tnabla{\widetilde{\nabla}}
\def\Bxi{\boldsymbol{\xi}}
\def\Bxi{\boldsymbol{\xi}}
\def\clip{\text{clip}}
\newcommand{\algname}[1]{{\sf  #1}\xspace}
\newcommand{\algnames}[1]{{\sf \small #1}\xspace}
\newcommand{\cmark}{{\color{PineGreen}\ding{51}}}%
\newcommand{\xmark}{{\color{BrickRed}\ding{55}}}%
\definecolor{bgcolor}{rgb}{0.8,1,1}
\definecolor{bgcolor2}{rgb}{0.8,1,0.8}
\newlength{\dhatheight}
\definecolor{mycolor}{RGB}{0,150,70}
\def\revision#1{{\color{black}#1}} %Revision
\begin{document}

\title{High Probability Complexity Bounds for Non-Smooth Stochastic Optimization with Heavy-Tailed Noise}
\author{Eduard Gorbunov$^{1}$ \quad Marina Danilova$^{2, 3}$ \quad Innokentiy Shibaev$^{2,4}$ \\
Pavel Dvurechensky$^{5}$\quad Alexander Gasnikov$^{6, 2, 7}$}
\date{$^1$ Mohamed bin Zayed University of Artificial Intelligence, United Arab Emirates\\
$^2$ Moscow Institute of Physics and Technology, Russian Federation\\
$^3$ Institute of Control Sciences RAS, Russian Federation\\
$^4$ National Research University Higher School of Economics, Russian Federation \\
$^5$ Weierstrass Institute for Applied Analysis and Stochastics, Germany\\
$^6$ Innopolis University, Russian Federation\\
$^7$ Institute for Information Transmission Problems RAS, Russian Federation}
\maketitle

\begin{abstract}
Stochastic first-order methods are standard for training large-scale machine learning models. Random behavior may cause a particular run of an algorithm to result in a highly suboptimal objective value, whereas theoretical guarantees are usually proved for the expectation of the objective value. Thus, it is essential to theoretically guarantee that algorithms provide small objective residuals with high probability. Existing methods for non-smooth stochastic convex optimization have complexity bounds with the dependence on the confidence level that is either negative-power or logarithmic but under an additional assumption of sub-Gaussian (light-tailed) noise distribution that may not hold in practice. In our paper, we resolve this issue and derive the first high-probability convergence results with logarithmic dependence on the confidence level for non-smooth convex stochastic optimization problems with non-sub-Gaussian (heavy-tailed) noise. To derive our results, we propose novel stepsize rules for two stochastic methods with gradient clipping. Moreover, our analysis works for generalized smooth objectives with H\"older-continuous gradients, and for both methods, we provide an extension for strongly convex problems. Finally, our results imply that the first (accelerated) method we consider also has optimal iteration and oracle complexity in all the regimes, and the second one is optimal in the non-smooth setting.
\end{abstract}

\section{Introduction}\label{sec:intro}
Stochastic first-order optimization methods like \algname{SGD} \citep{robbins1951stochastic}, \algname{Adam} \citep{kingma2014adam}, and their various modifications are extremely popular in solving a number of different optimization problems, especially those appearing in statistics \citep{spokoiny2012parametric}, machine learning, and deep learning \citep{Goodfellow-et-al-2016}. The success of these methods in real-world applications motivates the researchers to investigate the theoretical properties of the methods and to develop new ones with better convergence guarantees. Typically, stochastic methods are analyzed in terms of the convergence in expectation (see \revision{\citep{ghadimi2013stochastic, gower2019sgd, moulines2011non}} and references therein), whereas high-probability complexity results are established \revision{more} rarely. However, as illustrated in \citep{gorbunov2020clipped_sstm}, guarantees in terms of the convergence in expectation have a much worse correlation with the real behavior of the methods than high-probability convergence guarantees when the noise in the stochastic gradients has \textit{heavy-tailed distribution}.

Recent studies \revision{\citep{csimcsekli2019heavy, simsekli2019tail,zhang2020why}} show that in several popular problems such as training {\tt BERT} \citep{devlin2018bert} on \revision{the} {\tt Wikipedia} dataset the noise in the stochastic gradients is heavy-tailed. Moreover, \citep{zhang2020why} justify empirically that in such cases, \algname{SGD} works significantly worse than \algname{clipped-SGD} \citep{pascanu2013difficulty} and \algname{Adam}. Therefore, it is important to theoretically study the methods' convergence when the noise is heavy-tailed.

For convex and strongly convex problems with Lipschitz continuous gradient, i.e., smooth convex and strongly convex problems, this question was properly addressed in \revision{\citep{davis2021low,gorbunov2020clipped_sstm, nazin2019algorithms}}, where the first high-probability complexity bounds with logarithmic dependence on the confidence level were derived for the stochastic problems with heavy-tailed noise. However, a number of practically important problems are non-smooth \textit{on the whole space} \revision{\citep{mai2021stability, zhang2020gradient}}. For example, in deep neural network training, the loss function often grows polynomially fast when the norm of the network's weights goes to infinity. {Moreover, non-smoothness of the activation functions such as ReLU or loss functions such as hinge loss implies the non-smoothness of the whole problem.} 
\revision{While being well-motivated by practical applications, the existing high-probability convergence guarantees for stochastic first-order methods applied to solve non-smooth convex optimization problems with heavy-tailed noise have a drawback. Namely, the existing complexity bounds depend on the negative power of the confidence level. This dramatically increases the number of iterations required to obtain high accuracy  {of the solution} with probability close to one.}
% While being well-motivated by practical applications, the existing high-probability convergence guarantees for stochastic first-order methods applied to solve non-smooth convex optimization problems with heavy-tailed noise depend on the negative power of the confidence level that dramatically increases the number of iterations required to obtain high accuracy  {of the solution} with probability close to one. 
Such a discrepancy in the theory between algorithms for stochastic smooth and non-smooth problems leads us to the natural question: \textit{is it possible to obtain high-probability complexity bounds with logarithmic dependence on the confidence level for \textbf{non-smooth} convex stochastic problems with heavy-tailed noise?} In this paper, we give a positive answer to this question. Moreover, we derive the corresponding bounds under much weaker assumptions than the ones used in the previous works. {To achieve this we focus on gradient clipping methods\revision{, as in} \revision{\citep{gehring2017convolutional,mai2021stability,menon2020can,pascanu2013difficulty, zhang2020gradient,zhang2020why}}.}

\subsection{Preliminaries}\label{sec:prelim}
Before we describe our contributions in detail, we formally state the considered setup.

\paragraph{Notation and standard definitions.} We use standard notation for stochastic optimization literature. For all $x\in\R^n$ we use $\|x\|_2 = \sqrt{\la x,x\ra}$ to denote standard Euclidean norm, where $\la x,y \ra = x_1y_1 + x_2y_2 + \ldots + x_ny_n$, \revision{$x = (x_1,\ldots,x_n)^\top \in \R^n$}. Next, we use $\EE[\xi]$ and $\EE[\xi \mid \eta]$ to denote expectation of $\xi$ and expectation of $\xi$ conditioned on $\eta$ respectively. In some places of the paper, we also use $\EE_\xi[\cdot]$ to denote conditional expectation taken w.r.t.\ the randomness coming from $\xi$. The probability of event $E$ is defined as $\PP\{E\}$.
Finally, we use the following definition.
\begin{definition}\label{eq:str_cvx_def}
    \revision{A} differentiable function $f:Q\subseteq \R^n \to \R$ is called $\mu$-strongly convex for some $\mu \geq 0$ if for all $x,y \in Q$
    \begin{equation*}
        f(y) \ge f(x) + \langle\nabla f(x), y-x \rangle + \frac{\mu}{2}\|y-x\|_2^2.
    \end{equation*}
    When $\mu = 0$\revision{, the function} $f$ is called convex.
\end{definition}

\paragraph{Stochastic optimization.} We focus on the following problem
\begin{equation}
    \min\limits_{x\in \R^n}f(x),\quad f(x) = \EE_\xi\left[f(x,\xi)\right],\label{eq:main_problem}
\end{equation}
where \revision{$f$} is a convex but possibly non-smooth function. Next, we assume that at each point $x\in \R^n$ we have an access to the unbiased estimator $\nabla f(x,\xi)$ of $\nabla f(x)$ such that $\EE_\xi\left[\left\|\nabla f(x,\xi) - \nabla f(x)\right\|_2^2\right] < \infty$ and if additionally $x \in Q\subseteq \R^n$, then
\begin{equation}
    \EE_\xi[\nabla f(x,\xi)] = \nabla f(x),\quad
    \EE_\xi\left[\left\|\nabla f(x,\xi) - \nabla f(x)\right\|_2^2\right] \le \sigma^2,\quad \sigma > 0.\label{eq:bounded_variance_clipped_SSTM}
\end{equation}
This assumption with $Q = \R^n$ on the stochastic {oracle} is widely used in stochastic optimization literature \citep{ghadimi2012optimal, ghadimi2013stochastic, juditsky2011first, lan2012optimal, nemirovski2009robust}. In contrast, in our theoretical results, we assume that $Q$ is the ball centered at \revision{some}\footnote{\revision{Our proofs work for any $x^*$. In particular, one can choose $x^*$ being a projection of $x^0$ on the solutions set.}} solution $x^*$ of \eqref{eq:main_problem} with radius $\sim R_0 \ge \|x^0 - x^*\|_2$, where $x^0$ is a starting point of the method, i.e., \textit{our analysis does not require \eqref{eq:bounded_variance_clipped_SSTM} to hold on $\R^n$.} That is, our assumption on the noise is much more general than those used in previous works in the area. Finally, we emphasize that we do not assume that the stochastic gradients have so-called ``light tails'' \citep{lan2012optimal}, i.e., sub-Gaussian noise distribution meaning that $\PP\{\|\nabla f(x,\xi) - \nabla f(x)\|_2 > b\} \le 2\exp(-\nicefrac{b^2}{(2\sigma^2)})$ for all $b > 0$. 

\paragraph{Level of smoothness.} Finally, we assume that function $f$ has $(\nu, M_{\nu})$-H\"older continuous gradients\footnote{By default, we always write ``gradients'', though our analysis works for non-differentiable convex functions as well (when $\nu = 0$): \revision{at any point where the gradient is now calculated, it is sufficient to use any subgradient at this point.}
%one just needs to use replace gradients by subgradients. 
This remark is valid for Definition~\ref{eq:str_cvx_def} as well.} on a compact set $Q \subseteq \R^n$ for some $\nu \in [0,1]$, $M_\nu > 0$ meaning that
\begin{equation}
    \|\nabla f(x) - \nabla f(y)\|_2 \le M_\nu \|x- y\|_2^\nu\quad \forall x,y \in Q.\label{eq:holder_def}
\end{equation}
When $\nu = 1$ inequality \eqref{eq:holder_def} implies $M_1$-smoothness of $f$, and when $\nu = 0$ we have that $\nabla f(x)$
has bounded variation which is equivalent to being uniformly bounded.
%is uniformly upper bounded by $M_0$. 
Moreover, when $\nu = 0$ differentiability of $f$ is not needed: one can assume uniform boundedness of the subgradients of $f$ throughout the proofs. Linear regression in the case when the noise has generalized Gaussian distribution (Example 4.4 from \citep{Chaux_2007}) serves as a natural example of the situation with $\nu \in (0,1)$. Moreover, when \eqref{eq:holder_def} holds for $\nu = 0$ and $\nu = 1$ simultaneously then it holds for all $\nu\in[0,1]$ with $M_\nu \le M_0^{1-\nu}M_1^\nu$ \citep{nesterov2015universal}. As we show in our results, it is sufficient to assume that the set $Q$ is the ball centered at the solution $x^*$ of \eqref{eq:main_problem} with radius $\sim R_0 \ge \|x^0 - x^*\|_2$, where $x^0$ is a starting point of the method, i.e., \textit{our analysis does not require \eqref{eq:holder_def} to hold on $\R^n$.}

\revision{In addition to inequality \eqref{eq:holder_def}, we also assume that
\begin{equation}
        \|\nabla f(x)\|_2 \le \left(\frac{1+\nu}{\nu}\right)^{\frac{\nu}{1+\nu}}M_\nu^{\frac{1}{1+\nu}} \left(f(x) - f(x^*)\right)^{\frac{\nu}{1+\nu}},\quad \forall x \in Q, \label{eq:holder_gradient_bound}
\end{equation}
where for $\nu = 0$ we use $\left[\left(\frac{1+\nu}{\nu}\right)^{\frac{\nu}{1+\nu}}\right]_{\nu=0} := \lim_{\nu\to 0}\left(\frac{1+\nu}{\nu}\right)^{\frac{\nu}{1+\nu}} = 1$, and
\begin{equation}
        \|\nabla f(x)\|_2^2 \le
        2\left(\frac{1}{\delta}\right)^{\frac{1-\nu}{1+\nu}}M_{\nu}^{\frac{2}{1+\nu}}\left(f(x)-f(x^*)\right) + \delta^{\frac{2\nu}{1+\nu}} M_{\nu}^{\frac{2}{1+\nu}},\quad \forall x \in Q. \label{eq:holder_gradient_bound_2}
\end{equation}
As we prove in Lemmas~\ref{lem:gradient_bound} and \ref{lem:gradient_bound_2}, inequalities \eqref{eq:holder_gradient_bound} and \eqref{eq:holder_gradient_bound_2} follow from \eqref{eq:holder_def} when $Q = \R^n$. However, when $Q \neq \R^n$ minimal value of $M_\nu$ such that \eqref{eq:holder_def} holds on $Q$ can be smaller than the minimal value of $M_\nu$ such that \eqref{eq:holder_gradient_bound} and \eqref{eq:holder_gradient_bound_2} hold (see also the discussion in Appendix B from \cite{sadiev2023high}).
}

\paragraph{High-probability convergence.} For a given accuracy $\varepsilon > 0$ and confidence level $\beta \in (0,1)$ we are interested in finding $\varepsilon$-solutions of problem \eqref{eq:main_problem} with probability at least $1-\beta$, i.e., such $\widehat x$ that $\PP\{f(\widehat x) - f(x^*) \le \varepsilon\} \ge 1 - \beta$. For brevity, we will call such (in general, random) points $\widehat x$ as $(\varepsilon,\beta)$-solution of \eqref{eq:main_problem}. Moreover, by \revision{the} high-probability iteration/oracle complexity of a stochastic method $\cM$ we mean a sufficient number of iterations/oracle calls (number of $\nabla f(x,\xi)$ computations) needed to guarantee that $\cM$ returns an $(\varepsilon,\beta)$-solution of \eqref{eq:main_problem}.

\subsection{Contributions}
We summarize our main contributions below.

\begin{table}[t]
    \centering
    \scriptsize
    \caption{\small Summary of known and new high-probability complexity bounds for solving \eqref{eq:main_problem} with $f$ being \textbf{convex} and having $(\nu, M_\nu)$-H\"older continuous gradients. Columns: ``Complexity'' = high-probability complexity ($\varepsilon$ -- accuracy, $\beta$ -- confidence level, numerical constants, and logarithmic factors are omitted), ``HT'' = heavy-tailed noise, ``UD'' = unbounded domain, ``CS'' = bound on the variance of the stochastic gradient and H\"older continuity of the gradient is required only on the compact set. Notation: $R_0 = \|x^0 - x^*\|_2$, where $x^*$ is the closest solution to $x^0$; $\sigma^2$ = upper bound on the variance (see \eqref{eq:bounded_variance_clipped_SSTM}); $D$ = diameter of the set where optimization problem is defined. The results labeled by $^{\clubsuit}$ are obtained from the convergence guarantees in expectation via Markov's inequality. Negative-power dependencies on the confidence level $\beta$ are colored in red. Our results are highlighted in green.}
    \label{tab:cvx_case_comparison}
    {
    \begin{tabular}{|c|c|c|c|c|c|}
        \hline
        Method & {\centering Complexity} & $\nu$ & HT? & UD? & CS?\\
        \hline
        \hline
        \begin{tabular}{c}
            \algnames{SGD} \\ \citep{nemirovski2009robust}
        \end{tabular} & $\max\left\{\frac{M_0^2D^{2}}{\varepsilon^2},\frac{\sigma^2D^2}{\varepsilon^2}\right\}$ & $0$ & \xmark & \xmark & \xmark\\
        \hline
        \begin{tabular}{c}
            \algnames{AC-SA} \\
            \citep{ghadimi2012optimal,lan2012optimal}
        \end{tabular} & $\max\left\{\sqrt{\frac{M_1R_0^{2}}{\varepsilon}},\frac{\sigma^2R_0^2}{\varepsilon^2}\right\}$ & $1$ & \xmark & \xmark & \xmark\\
        \hline
        \begin{tabular}{c}
            \algnames{SIGMA} \\ \citep{dvurechensky2016stochastic}
        \end{tabular} & $\max\left\{\frac{M_\nu^{\frac{2}{1+3\nu}}R_0^{\frac{2(1+\nu)}{1+3\nu}}}{\varepsilon^{\frac{2}{1+3\nu}}},\frac{\sigma^2R_0^2}{\varepsilon^2}\right\}$ & $[0,1]$ & \xmark & \xmark & \xmark\\
        \hline
        \begin{tabular}{c}
            \algnames{SGD} \\  \citep{nemirovski2009robust}$^{\clubsuit}$
        \end{tabular} & $\max\left\{\frac{M_0^2R_0^{2}}{{\color{BrickRed}\beta^2}\varepsilon^2},\frac{\sigma^2R_0^2}{{\color{BrickRed}\beta^2}\varepsilon^2}\right\}$ & $0$ & \cmark & \xmark & \xmark\\
        \hline
        \begin{tabular}{c}
            \algnames{AC-SA} \\ \citep{ghadimi2012optimal,lan2012optimal}$^{\clubsuit}$
        \end{tabular} & $\max\left\{\sqrt{\frac{M_1R_0^{2}}{{\color{BrickRed}\beta}\varepsilon}},\frac{\sigma^2R_0^2}{{\color{BrickRed}\beta^2}\varepsilon^2}\right\}$ & $1$ & \cmark & \cmark & \xmark\\
        \hline
        \begin{tabular}{c}
            \algnames{SIGMA} \\ \citep{dvurechensky2016stochastic}$^{\clubsuit}$
        \end{tabular} & $\max\left\{\frac{M_\nu^{\frac{2}{1+3\nu}}R_0^{\frac{2(1+\nu)}{1+3\nu}}}{{\color{BrickRed}\beta^{\frac{2}{1+3\nu}}}\varepsilon^{\frac{2}{1+3\nu}}},\frac{\sigma^2R_0^2}{{\color{BrickRed}\beta^2}\varepsilon^2}\right\}$ & $[0,1]$ & \cmark & \cmark & \xmark\\
        \hline
        \begin{tabular}{c}
            \algnames{clipped-SSTM} \\ \citep{gorbunov2020clipped_sstm}
        \end{tabular} & $\max\left\{\sqrt{\frac{M_1R_0^{2}}{\varepsilon}},\frac{\sigma^2R_0^2}{\varepsilon^2}\right\}$ & $1$ & \cmark & \cmark & \xmark \\
        \hline
        \begin{tabular}{c}
            \algnames{clipped-SGD} \\ \citep{gorbunov2020clipped_sstm}
        \end{tabular} & $\max\left\{\frac{M_1R_0^2}{\varepsilon},\frac{\sigma^2R_0^2}{\varepsilon^2}\right\}$ & $1$ & \cmark & \cmark & \xmark \\
        \hline
        \rowcolor{bgcolor2}\begin{tabular}{c}
            \algnames{clipped-SSTM} \\ (Theorem~\ref{thm:main_result_clipped_SSTM_main})
        \end{tabular} & $\max\left\{\frac{M_\nu^{\frac{2}{1+3\nu}}R_0^{\frac{2(1+\nu)}{1+3\nu}}}{\varepsilon^{\frac{2}{1+3\nu}}},\frac{\sigma^2R_0^2}{\varepsilon^2}\right\}$ & $[0,1]$ & \cmark & \cmark & \cmark \\
        \hline
        \rowcolor{bgcolor2}\begin{tabular}{c}
            \algnames{clipped-SGD} \\ (Theorem~\ref{thm:main_result_clipped_SGD_main})
        \end{tabular} & $\max\left\{\frac{M_\nu^{\frac{2}{1+\nu}}R_0^2}{\varepsilon^{\frac{2}{1+\nu}}},\frac{\sigma^2R_0^2}{\varepsilon^2}\right\}$ & $[0,1]$ & \cmark & \cmark & \cmark \\
        \hline
    \end{tabular}}
\end{table}

\begin{table}[t]
    \centering
    \scriptsize
    \caption{\small Summary of known and new high-probability complexity bounds for solving \eqref{eq:main_problem} with $f$ being \textbf{$\mu$-strongly convex} and having $(\nu, M_\nu)$-H\"older continuous gradients. Columns: ``Complexity'' = high-probability complexity ($\varepsilon$ -- accuracy, $\beta$ -- confidence level, numerical constants, and logarithmic factors are omitted), ``HT'' = heavy-tailed noise, ``UD'' = unbounded domain, ``CS'' = bound on the variance of the stochastic gradient and H\"older continuity of the gradient is required only on the compact set. Notation: $R_0 = \|x^0 - x^*\|_2$, where $x^*$ is the closest solution to $x^0$; $\sigma^2$ = upper bound on the variance (see \eqref{eq:bounded_variance_clipped_SSTM}). The results labeled by $^{\clubsuit}$ are obtained from the convergence guarantees in expectation via Markov's inequality. Negative-power dependencies on the confidence level $\beta$ are colored in red. Our results are highlighted in green.}
    \label{tab:str_cvx_case_comparison}
    {
    \begin{tabular}{|c|c|c|c|c|c|}
        \hline
        Method & {\centering Complexity} & $\nu$ & HT? & UD? & CS?\\
        \hline
        \hline
        \begin{tabular}{c}
            \algnames{SGD} \\ \citep{nemirovski2009robust}
        \end{tabular} & $\max\left\{\frac{M_0^2}{\mu\varepsilon},\frac{\sigma^2}{\mu\varepsilon}\right\}$ & $0$ & \xmark & \xmark & \xmark\\
        \hline
        \begin{tabular}{c}
            \algnames{AC-SA} \\
            \citep{ghadimi2012optimal,lan2012optimal}
        \end{tabular} & $\max\left\{\sqrt{\frac{M_1}{\mu}},\frac{\sigma^2}{\mu\varepsilon}\right\}$ & $1$ & \xmark & \xmark & \xmark\\
        \hline
        \begin{tabular}{c}
            \algnames{SIGMA} \\ \citep{dvurechensky2016stochastic}
        \end{tabular} & \makecell{$\max\left\{\hat{N},\frac{\sigma^2}{\mu\varepsilon}\right\}$,\\ $\hat{N} = \max\!\left\{\! \left(\!\frac{M_\nu}{\mu R_0^{1\!-\!\nu}}\!\right)^{\!\frac{2}{1\!+\!3\nu}}\!,\! \left(\!\frac{M_\nu^2}{\mu^{1\!+\!\nu}\varepsilon^{1\!-\!\nu}}\!\right)^{\frac{1}{1\!+\!3\nu}}\!\right\}$} & $[0,1]$ & \xmark & \xmark & \xmark\\
        \hline
        \begin{tabular}{c}
            \algnames{SGD} \\  \citep{nemirovski2009robust}$^{\clubsuit}$
        \end{tabular} & $\max\left\{\frac{M_0^2}{\mu{\color{BrickRed}\beta}\varepsilon},\frac{\sigma^2}{\mu{\color{BrickRed}\beta}\varepsilon}\right\}$ & $0$ & \cmark & \xmark & \xmark\\
        \hline
        \begin{tabular}{c}
            \algnames{AC-SA} \\ \citep{ghadimi2012optimal,lan2012optimal}$^{\clubsuit}$
        \end{tabular} & $\max\left\{\sqrt{\frac{M_1}{\mu}},\frac{\sigma^2}{\mu{\color{BrickRed}\beta}\varepsilon}\right\}$ & $1$ & \cmark & \cmark & \xmark\\
        \hline
        \begin{tabular}{c}
            \algnames{SIGMA} \\ \citep{dvurechensky2016stochastic}$^{\clubsuit}$
        \end{tabular} & \makecell{$\max\left\{\hat{N},\frac{\sigma^2}{\mu\hat{\varepsilon}}\right\}$, $\hat{\varepsilon} = {\color{BrickRed}\beta}\varepsilon$,\\ $\hat{N} \!=\! \left(\!\frac{M_\nu}{\mu R_0^{1\!-\!\nu}}\!\right)^{\!\frac{2}{1\!+\!3\nu}}\!+\! \left(\!\frac{M_\nu^2}{\mu^{1\!+\!\nu}\hat{\varepsilon}^{1\!-\!\nu}}\!\right)^{\frac{1}{1\!+\!3\nu}}$} & $[0,1]$ & \cmark & \cmark & \xmark\\
        \hline
        \begin{tabular}{c}
            \algnames{R-clipped-SSTM} \\ \citep{gorbunov2020clipped_sstm}
        \end{tabular} & $\max\left\{\sqrt{\frac{M_1}{\mu}},\frac{\sigma^2}{\mu\varepsilon^2}\right\}$ & $1$ & \cmark & \cmark & \xmark \\
        \hline
        \begin{tabular}{c}
            \algnames{R-clipped-SGD} \\ \citep{gorbunov2020clipped_sstm}
        \end{tabular} & $\max\left\{\frac{M_1}{\mu},\frac{\sigma^2}{\mu\varepsilon^2}\right\}$ & $1$ & \cmark & \cmark & \xmark \\
        \hline
        \rowcolor{bgcolor2}\begin{tabular}{c}
            \algnames{R-clipped-SSTM} \\ (Theorem~\ref{thm:main_result_clipped_SSTM_str_cvx_main})
        \end{tabular} & \makecell{$\max\left\{\hat{N},\frac{\sigma^2}{\mu\varepsilon}\right\}$,\\ $\hat{N} = \max\!\left\{\! \left(\!\frac{M_\nu}{\mu R_0^{1\!-\!\nu}}\!\right)^{\!\frac{2}{1\!+\!3\nu}}\!,\! \left(\!\frac{M_\nu^2}{\mu^{1\!+\!\nu}\varepsilon^{1\!-\!\nu}}\!\right)^{\frac{1}{1\!+\!3\nu}}\!\right\}$} & $[0,1]$ & \cmark & \cmark & \cmark \\
        \hline
        \rowcolor{bgcolor2}\begin{tabular}{c}
            \algnames{R-clipped-SGD} \\ (Theorem~\ref{thm:main_result_clipped_SGD_str_cvx_main})
        \end{tabular} & $\max\left\{\frac{M_\nu^{\frac{2}{1+\nu}}}{\mu^{\frac{2}{1+\nu}}R_0^{\frac{2(1-\nu)}{1+\nu}}}, \frac{M_\nu^{\frac{2}{1+\nu}}}{\mu \varepsilon^{\frac{1-\nu}{1+\nu}}}, \frac{\sigma^2}{\mu\varepsilon}\right\}$ & $[0,1]$ & \cmark & \cmark & \cmark \\
        \hline
    \end{tabular}}
\end{table}

\begin{itemize}
    \item[$\diamond$] \textbf{The first near-optimal high-probability bounds for non-smooth problems with heavy-tailed noise.} We propose novel stepsize rules for \algname{clipped-SSTM} \citep{gorbunov2020clipped_sstm} to  {handle} problems with the objective having \revision{a} $(\nu, M_\nu)$-H\"older continuous gradient and derive in this setting high-probability complexity guarantees for convex stochastic optimization problems without using \revision{the} ``light tails'' assumption, i.e., we prove that  our version of \algname{clipped-SSTM} has
    \begin{equation*}
        \cO\left(\max\left\{D\ln^{\frac{2(1+\nu)}{1+3\nu}}\frac{D}{\beta},\frac{\sigma^2R_0^2}{\varepsilon^2}\ln\frac{D}{\beta}\right\}\right), \quad D = \frac{M_\nu^{\frac{2}{1+3\nu}}R_0^{\frac{2(1+\nu)}{1+3\nu}}}{\varepsilon^{\frac{2}{1+3\nu}}}
    \end{equation*}
    high-probability complexity. Unlike all previous high-probability complexity results in this setup with $\nu < 1$ (see Table~\ref{tab:cvx_case_comparison}), our result depends only logarithmically on the confidence level $\beta$ that is highly important when $\beta$ is small. Moreover, up to the difference in logarithmic factors, the derived complexity guarantees meet the known lower bounds \revision{\citep{guzman2015lower, lan2012optimal}} obtained \revision{for problems} with light-tailed noise.  {In particular, when $\nu = 1$\revision{,} we recover \revision{the} accelerated convergence rate \revision{\citep{lan2012optimal, nesterov1983method}}}. That is, neglecting the logarithmic factors, our results are unimprovable  {and,  {surprisingly}, coincide with the best-known results in the ``light-tailed case''.}
    
    \item[$\diamond$] \textbf{New high-probability bounds for \algname{clipped-SGD}.} We derive the first high-probability complexity bounds for \algname{clipped-SGD} when the objective function is convex with $(\nu, M_\nu)$-H\"older continuous gradient and the noise is heavy tailed., i.e., we derive
    \begin{equation*}
        \cO\left(\max\left\{D^2,\max\left\{D^{1+\nu},\frac{\sigma^2R_0^2}{\varepsilon^2}\right\}\ln\frac{D^2 + D^{1+\nu}}{\beta}\right\}\right),\quad D = \frac{M_\nu^{\frac{1}{1+\nu}}R_0}{\varepsilon^{\frac{1}{1+\nu}}}
    \end{equation*}
    \revision{the} high-probability complexity bound. Interestingly, when $\nu = 0$, the derived bound for \algname{clipped-SGD} has better dependence on the logarithms than the corresponding one for \algname{clipped-SSTM}. Moreover, neglecting the dependence on $\varepsilon$ under the logarithm, our bound for \algname{clipped-SGD} has the same dependence on the confidence level as the tightest known result in this case under the ``light tails'' assumption \citep{guigues2017non}.
    
    \item[$\diamond$] \textbf{Extensions to the strongly convex case.} Using \revision{the} restarts technique we extend the obtained results for \algname{clipped-SSTM} and \algname{clipped-SGD} to the strongly convex case (see Table~\ref{tab:str_cvx_case_comparison}). As in the convex case, the obtained results are superior to all previously known results in the general setup we consider. Moreover, the results derived for \algname{clipped-SSTM} are optimal up to logarithmic factors \revision{\citep{guzman2015lower, lan2012optimal}}.
    
    \item[$\diamond$] \textbf{Generality of the results.} As one of the key contributions of this work, we emphasize that in our theoretical results it is sufficient to assume boundedness of the variance of the stochastic gradient \eqref{eq:variance_bound_clipped_SSTM} and H\"older continuity of the gradients of $f$ only on the ball with radius $\sim R_0 = \|x^0 - x^*\|_2$ and centered at the closest to the starting point solution of the problem. This makes our results applicable to a much wider class of problems than functions with H\"older continuous gradients on $\R^n$, e.g., our analysis works even for polynomially growing objectives. Moreover, this feature of our analysis allows us to consider strongly convex functions. Indeed, the class of strongly convex functions on $\R^n$ with H\"older continuous gradients on $\R^n$ with $\nu < 1$ is empty. Therefore, it is crucial to assume H\"older continuity of gradients only on a bounded set\footnote{It is also worth mentioning that some functions have H\"older continuous gradients for multiple $\nu$ simultaneously \citep{nesterov2015universal}. Therefore, if constants $M_\nu$ are available, one can choose the best $\nu$ in terms of the iteration/oracle complexity of a method.}. Next, we do not require the variance of the stochastic gradient to be uniformly bounded on the whole space, e.g., we allow the variance at point $x$ to grow when $\|x - x^*\|_2 \to \infty$. We emphasize that even for smooth problems ($\nu = 1$) all previous works in the area rely on the uniform boundedness of the variance on the whole space (see Tables~\ref{tab:cvx_case_comparison} and \ref{tab:str_cvx_case_comparison}). Next, in the works focusing on the ``light tails'' case, the uniform boundedness of sub-Gaussian variance and H\"older continuity of the gradients are also assumed on $\R^n$. All of these facts emphasize the generality of our results.
    
    \item[$\diamond$] \textbf{Experiments.} To test the performance of the considered methods, we conduct several numerical experiments on image classification and NLP tasks and observe that 1) \algname{clipped-SSTM} and \algname{clipped-SGD} show comparable performance with \algname{SGD} on the image classification task, when the noise distribution is almost sub-Gaussian, 2) converge much faster than \algname{SGD} on the NLP task, when the noise distribution is heavy-tailed, and 3) \algname{clipped-SSTM} achieves a comparable performance with \algname{Adam} on the NLP task enjoying both the best known theoretical guarantees and good practical performance. We also compare \algname{clipped-SSTM}, \algname{clipped-SGD}, \algname{SGD}, and \algname{Adam} on solving the convex problem, corresponding to the linear regression with the noise having generalized Gaussian distribution. Our codes are publicly available: \url{https://github.com/ClippedStochasticMethods/clipped-SSTM}.
\end{itemize}

\subsection{Related Work}
\paragraph{Light-tailed noise.} The theory of high-probability complexity bounds for convex stochastic optimization with light-tailed noise is well-developed. Lower bounds and optimal methods for the problems with $(\nu,M_\nu)$-H\"older continuous gradients are obtained in \citep{nemirovski2009robust} for $\nu = 0$, and in \citep{ghadimi2012optimal} for $\nu = 1$. Up to the logarithmic dependencies, these high-probability convergence bounds coincide with the corresponding results for the convergence in expectation (see first two rows of Table~\ref{tab:cvx_case_comparison}).
%and are given in Table~\ref{tab:cvx_case_comparison} in the first two rows. 
While not being directly derived in the literature, the lower bound for the case when $\nu \in (0,1)$ can be obtained as a combination of lower bounds in the deterministic \revision{\citep{guzman2015lower, nemirovsky1983problem}} and smooth stochastic settings \citep{ghadimi2012optimal}. The corresponding optimal methods are analyzed in \citep{devolder2013exactness,dvurechensky2016stochastic} \revision{based on the concept of inexact oracle.}
\vspace{-1em}
\paragraph{Heavy-tailed noise.} Unlike in the ``light-tailed'' case, the first theoretical guarantees with reasonable dependence on both the accuracy $\varepsilon$ and the confidence level $\beta$ appeared just recently. In \citep{nazin2019algorithms}, the first such results \revision{without the kind of acceleration appearing in \citep{nesterov1983method}} were derived for Mirror Descent with special truncation technique for smooth ($\nu = 1$) convex problems on a bounded domain, and then were accelerated and extended in \citep{gorbunov2020clipped_sstm}. \revision{For strongly} convex problems, the first accelerated high-probability convergence guarantees were obtained in \citep{davis2021low} for the special method called \algname{proxBoost} \revision{requiring the solving of nontrivial auxiliary} problems at each iteration. These bounds were tightened in \citep{gorbunov2020clipped_sstm}.

In contrast, for the case when $\nu < 1$ and, in particular, when $\nu = 0$ the best-known high-probability complexity bounds suffer from the negative-power dependence on the confidence level $\beta$, i.e., have a factor $\nicefrac{1}{\beta^{\alpha}}$ for some $\alpha >0$, that affects the convergence rate dramatically for small enough $\beta$. Without additional assumptions on the tails these results are obtained via Markov's inequality $\PP\{f(x) - f(x^*) > \varepsilon\} < \nicefrac{\EE[f(x) - f(x^*)]}{\varepsilon}$ from the guarantees for the convergence in expectation to the accuracy $\varepsilon\beta$, see the results labeled by $^\clubsuit$ in Table~\ref{tab:cvx_case_comparison}. Under an additional assumption on noise tails that $\PP\{\|\nabla f(x,\xi) - \nabla f(x)\|_2^2 > s\sigma^2\} = \revision{\cO}(s^{-\alpha})$ for $\alpha > 2$ these results can be tightened \citep{gasnikov2015efficiency} when $\nu = 0$ as 
$\sim \max\left\{ \nicefrac{\ln\left(\beta^{-1}\right)}{\varepsilon^2},\left(\nicefrac{1}{\beta\varepsilon^{\alpha}}\right)^{\nicefrac{2}{(3\alpha - 2)}}\right\}$
without removing the negative-power dependence on the confidence level $\beta$. Different stepsize policies allow to change the last term in $\max$ to $\beta^{-\frac{1}{2\alpha - 1}}\varepsilon^{-\frac{2\alpha}{2\alpha - 1}}$ without removing the negative-power dependence on $\beta$.
\vspace{-1em}
\paragraph{Comparison with \citep{gorbunov2020clipped_sstm}.} Although our results and proof technique are based on the ones proposed in \citep{gorbunov2020clipped_sstm}, our work extends and significantly differs from \citep{gorbunov2020clipped_sstm}. First of all, we consider problems with H\"{o}lder continuous gradients, while the authors of \citep{gorbunov2020clipped_sstm} obtain their results only for the smooth functions. To derive a proper generalization of the results from \citep{gorbunov2020clipped_sstm}, we propose different stepsizes for \algname{clipped-SSTM} and \algname{clipped-SGD} and we also modify the proofs significantly to circumvent the additional issues arising due to the partial smoothness of the problem, especially in the part where we prove high-probability bound for the norm of the gradient (see the derivation of inequality \eqref{eq:main_thm_clipped_SSTM_technical_4}). Since this part is one of the most important ones in the proof, this fact highlights the difference between two approaches. Moreover, \citep{gorbunov2020clipped_sstm} assume that the variance is uniformly upper bounded and the gradient is Lipschitz-continuous on $\R^n$, while our analysis relies on much weaker assumptions that \eqref{eq:bounded_variance_clipped_SSTM} and \eqref{eq:holder_def} hold on a ball around the solution, i.e., on a compact set. Thus, our results are proven for a much wider class of problems, including ones with polynomially growing objective function/variance $\EE_\xi\left[\left\|\nabla f(x,\xi) - \nabla f(x)\right\|_2^2\right]$ when $\|x - x^*\|_2 \to \infty$.

\paragraph{Gradient clipping.} The methods based on gradient clipping \citep{pascanu2013difficulty} and normalization \citep{hazan2015beyond} are popular in different machine learning and deep learning tasks due to their robustness in practice to the noise in the stochastic gradients and rapid changes of the objective function \citep{Goodfellow-et-al-2016}. In \revision{\citep{mai2021stability, zhang2020gradient}}, \algname{clipped-GD} and \algname{clipped-SGD} are theoretically studied in applications to non-smooth problems with an objective that can grow polynomially fast when $\|x - x^*\|_2 \to \infty$ showing the superiority of gradient clipping methods to the methods without clipping. The results from \citep{zhang2020gradient} are obtained for non-convex problems with almost surely bounded noise, and in \citep{mai2021stability}, the authors derive the stability and expectation convergence guarantees for strongly convex objectives  {under an assumption that the central $p$-th moment of the stochastic gradient is bounded for $p \ge 2$. Since \citep{mai2021stability} do not provide convergence guarantees with explicit dependencies on all important parameters of the problem, it complicates direct comparison with our results. Nevertheless, convergence guarantees from \citep{mai2021stability} are sub-linear and are given for the convergence in expectation, and, as a consequence, the corresponding high-probability convergence results obtained via the Markov's inequality also suffer from negative-power dependence on the confidence level.} Next, the authors of \citep{zhang2020why} establish several expectation convergence guarantees for \algname{clipped-SGD} and prove their optimality in the non-convex case under the assumption that the central $\alpha$-moment of the stochastic gradient is uniformly bounded, where $\alpha \in (1,2]$. It turns out that \algname{clipped-SGD} is able to converge even when $\alpha < 2$, whereas vanilla \algname{SGD} can diverge in this setting.

\subsection{Paper Organization}
We present and discuss the simplified versions of our main results on \algname{clipped-SSTM} (Theorems~\ref{thm:main_result_clipped_SSTM_main} and \ref{thm:main_result_clipped_SSTM_str_cvx_main}) and \algname{clipped-SGD} (Theorems~\ref{thm:main_result_clipped_SGD_main} and \ref{thm:main_result_clipped_SGD_str_cvx_main}) in Sections~\ref{sec:clipped_SSTM_main} and \ref{sec:clipped_SGD_main} respectively. The detailed statements of the main results, complete proofs, and the corollaries for unit batch sizes (Corollaries~\ref{cor:SSTM_cvx_small_batch} and \ref{cor:SGD_cvx_small_batch}) are given in Sections~\ref{sec:clipped_SSTM_appendix} and \ref{sec:clipped_SGD_appendix}. Finally, Section~\ref{sec:experiments} contains the results of our numerical experiments. In Appendix~\ref{sec:basic_facts}, we give some useful auxiliary lemmas and prove few technical results. In Appendix~\ref{sec:extra_experiments}, we provide a detailed description of the setup for numerical experiments and extra numerical results. 

\section{Clipped Stochastic Similar Triangles Method}\label{sec:clipped_SSTM_main}
In this section, we propose a novel variation of Clipped Stochastic Similar Triangles Method \citep{gorbunov2020clipped_sstm} adjusted to the class of objectives with H\"older continuous gradients (\algname{clipped-SSTM}, see Algorithm~\ref{alg:clipped-SSTM}).

The method is based on the clipping of the stochastic gradients:
\begin{equation}
    \clip(\nabla f(x,\Bxi), \lambda) = \min\left\{1,\frac{\lambda}{\|\nabla f(x,\Bxi)\|_2}\right\}\nabla f(x,\Bxi) \label{eq:clip_operator}
\end{equation}
where $\nabla f(x,\Bxi)=\frac{1}{m}\sum_{i=1}^m\nabla f(x,\xi_i)$ is a mini-batched stochastic gradient \revision{and for shortness we denote by $\Bxi$ the collection $\{ \xi_i\}_{i=1}^m$ of samples.} Gradient clipping ensures that the resulting vector has a norm bounded by the clipping level $\lambda$. Since the clipped stochastic gradient cannot have an arbitrary large norm, the clipping helps to avoid unstable behavior of the method when the noise is heavy-tailed, and the clipping level $\lambda$ is properly adjusted.

However, unlike the stochastic gradient, the clipped stochastic gradient is a \textit{biased} estimate of $\nabla f(x)$: the smaller the clipping level, the larger the bias. The biasedness of the clipped stochastic gradient complicates the analysis of the method. On the other hand, to circumvent the negative effect of the heavy-tailed noise on the high-probability convergence, one should choose $\lambda$ to be not too large. Therefore, the question on the appropriate choice of the clipping level is highly non-trivial.

Fortunately, there exists a simple but insightful observation that helps us to obtain the right formula for the clipping level $\lambda_{k}$ \revision{at iteration $k$} in \algname{clipped-SSTM}: if $\lambda_k$ is chosen in such a way that $\|\nabla f(x^{k})\|_2 \le \nicefrac{\lambda_k}{2}$ with high probability, then for the realizations $\nabla f(x^{k+1},\Bxi^{k})$ of the mini-batched stochastic gradient such that $\|\nabla f(x^{k+1},\Bxi^{k}) - \nabla f(x^{k+1})\|_2 \le \nicefrac{\lambda_k}{2}$ the clipping is an identity operator. Next, if the probability mass of such realizations is big enough, then the bias of the clipped stochastic gradient is properly \revision{bounded, which helps derive
the needed} convergence guarantees. It turns out that the choice $\lambda_k \sim \nicefrac{1}{\alpha_k}$ ensures the method convergence with the needed rate and high enough probability.

\begin{algorithm}[h]
\caption{Clipped Stochastic Similar Triangles Method (\algname{clipped-SSTM}): case $\nu \in [0,1]$}
\label{alg:clipped-SSTM}   
\begin{algorithmic}[1]
\Require starting point $x^0$, number of iterations $N$, batch sizes $\{m_k\}_{k=1}^N $, stepsize parameter $\alpha$, clipping parameter $B$, H\"older exponent $\nu \in [0,1]$.
\State Set $A_0 = \alpha_0 = 0$, $y^0 = z^0 = x^0$
\For{\revision{$k=0,1,\ldots, N-1$}}
% \State Set $\alpha_{k+1} = \frac{\varepsilon}{2aM_0^2}$, $A_{k+1} = A_k + \alpha_{k+1} = \frac{(k+1)\varepsilon}{2aM_0^2}$, $\lambda_{k+1} = \frac{B}{\alpha_{k+1}}$
\State Set $\alpha_{k+1} = \alpha (k+1)^{\frac{2\nu}{1+\nu}}$, $A_{k+1} = A_k + \alpha_{k+1}$, $\lambda_{k+1} = \frac{B}{\alpha_{k+1}}$
\State $x^{k+1} = \nicefrac{(A_k y^k + \alpha_{k+1} z^k)}{A_{k+1}}$
\State Draw mini-batch $m_k$ of fresh i.i.d.\ samples $\xi_1^k,\ldots,\xi_{m_{k}}^k$ and compute $\nabla f(x^{k+1},\Bxi^k) = \frac{1}{m_k}\sum_{i=1}^{m_k}\nabla f(x^{k+1},\xi_i^k)$
\State Compute $\tnabla f(x^{k+1},\Bxi^k) = \clip(\nabla f(x^{k+1},\Bxi^k),\lambda_{k+1})$ using \eqref{eq:clip_operator} 
\State $z^{k+1} = z^k - \alpha_{k+1}\tnabla f(x^{k+1},\Bxi^k)$
\State $y^{k+1} = \nicefrac{(A_k y^k + \alpha_{k+1} z^{k+1})}{A_{k+1}}$
\EndFor
\Ensure $y^N$ 
\end{algorithmic}
\end{algorithm}

Guided by this observation, we derive the precise expressions for all the parameters of \algname{clipped-SSTM} and derive high-probability complexity bounds for the method. Below, we provide a simplified version of the main result for \algname{clipped-SSTM} in the convex case. The complete formulation and the full proof of the theorem are deferred to Section~\ref{sec:clipped_SSTM_cvx_appendix} (see Theorem~\ref{thm:main_result_clipped_SSTM}).
\begin{theorem}[Simplified version of Theorem~\ref{thm:main_result_clipped_SSTM}]\label{thm:main_result_clipped_SSTM_main}
    Assume that function $f$ is convex, its stochastic gradient and its gradient satisfy \eqref{eq:bounded_variance_clipped_SSTM} and \eqref{eq:holder_def} respectively with $\sigma > 0$, $\nu \in [0,1]$, $M_\nu > 0$ on $Q = B_{3R_0}(x^*) = \{x\in\R^n\mid \|x-x^*\|_2 \le 3R_0\}$, where $R_0 \ge \|x^0 - x^*\|_2$. Then there exists such a choice of parameters that \algname{clipped-SSTM} achieves $f(y^N) - f(x^*) \le \varepsilon$ with probability at least $1-\beta$ after $\cO\left(D\ln^{\frac{2(1+\nu)}{1+3\nu}}\frac{D}{\beta}\right)$ iterations with $D = \frac{M_\nu^{\frac{2}{1+3\nu}}R_0^{\frac{2(1+\nu)}{1+3\nu}}}{\varepsilon^{\frac{2}{1+3\nu}}}$ and requires
    \begin{equation}
        \cO\left(\max\left\{D\ln^{\frac{2(1+\nu)}{1+3\nu}}\frac{D}{\beta},\frac{\sigma^2R_0^2}{\varepsilon^2}\ln\frac{D}{\beta}\right\}\right)\text{ oracle calls.} \label{eq:clipped_SSTM_oracle_complexity_main}
    \end{equation}
\end{theorem}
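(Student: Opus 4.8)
The plan is to analyze \algname{clipped-SSTM} as an accelerated method driven by an inexact first-order oracle, and then to superimpose a martingale-concentration argument built on a bias/variance decomposition of the clipping error. First I would fix the potential (Lyapunov) function
\[
\Psi_k \;=\; A_k\big(f(y^k) - f(x^*)\big) + \tfrac{1}{2}\|z^k - x^*\|_2^2
\]
and derive the usual one-step inequality for the similar-triangles recursion $x^{k+1},z^{k+1},y^{k+1}$. To absorb the H\"older exponent $\nu\in[0,1]$, I would convert \eqref{eq:holder_def} into the inexact-oracle bound $f(y^{k+1}) \le f(x^{k+1}) + \langle \nabla f(x^{k+1}), y^{k+1}-x^{k+1}\rangle + \tfrac{L_{k+1}}{2}\|y^{k+1}-x^{k+1}\|_2^2 + \delta_{k+1}$ via Young's inequality, choosing $L_{k+1},\delta_{k+1}$ so that the acceleration condition $\alpha_{k+1}^2 L_{k+1} \le A_{k+1}$ holds for the prescribed $\alpha_{k+1} = \alpha(k+1)^{2\nu/(1+\nu)}$ (which gives $A_k \asymp k^{(1+3\nu)/(1+\nu)}$) and so that the accumulated inexactness $\sum_k \delta_{k+1}$ stays below $\varepsilon A_N$ up to constants. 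This is exactly the mechanism reproducing the deterministic H\"older-optimal iteration count $N \asymp D$.

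Second, I would write $\tnabla f(x^{k+1},\Bxi^k) = \nabla f(x^{k+1}) + \theta_{k+1}$ and split the error into a bias part $\theta^b_{k+1} = \EE[\tnabla f\mid x^{k+1}] - \nabla f(x^{k+1})$ and an unbiased martingale-difference part $\theta^u_{k+1} = \tnabla f - \EE[\tnabla f \mid x^{k+1}]$. Unrolling $\Psi_{k+1}\le\Psi_k + (\text{errors})$ then yields, besides the deterministic $\cO(R_0^2/A_N)$ and inexactness contributions, the stochastic terms $\sum_k \alpha_{k+1}\langle \theta^u_{k+1}, x^* - z^k\rangle$, $\sum_k \alpha_{k+1}\langle \theta^b_{k+1}, x^* - z^k\rangle$, and the quadratic residuals $\sum_k \alpha_{k+1}^2\|\theta_{k+1}\|_2^2$, all of which must be shown to be $\lesssim \varepsilon A_N$ with probability at least $1-\beta$.

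Third, and this is the heart of the proof, I would run an induction over $k$ on the high-probability event that every iterate stays in the ball $B_{3R_0}$ where \eqref{eq:holder_def} is assumed, so that $\|\nabla f(x^k)\|_2$ is uniformly bounded. The clipping level $\lambda_{k+1} = B/\alpha_{k+1}$ is tuned so that on this event $\|\nabla f(x^{k+1})\|_2 \le \lambda_{k+1}/2$; then for the ``typical'' samples with $\|\nabla f(x^{k+1},\Bxi^k) - \nabla f(x^{k+1})\|_2 \le \lambda_{k+1}/2$ clipping acts as the identity, so $\|\theta^b_{k+1}\|_2$ is governed by the tail mass of the mini-batched noise (bounded via \eqref{eq:bounded_variance_clipped_SSTM} and Chebyshev) and is driven down by the batch sizes $m_k$. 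The martingale term $\sum_k \alpha_{k+1}\langle \theta^u_{k+1}, x^* - z^k\rangle$ is handled by Bernstein's inequality for martingale differences: on the inductive event the summands are bounded (since $\|z^k - x^*\|_2 = \cO(R_0)$ and $\|\theta^u_{k+1}\|_2 \le 2\lambda_{k+1}$ after clipping) and the conditional variances are $\lesssim \sigma^2/m_k$, which produces precisely the logarithmic factor $\ln(N/\beta)\asymp\ln(D/\beta)$ after a union bound over the $N$ steps. Closing the induction — showing these bounds force $\|z^{k+1}-x^*\|_2 = \cO(R_0)$ and hence keep all iterates in $B_{3R_0}$ — simultaneously validates the use of \eqref{eq:holder_def} on the compact set and produces an event of probability at least $1-\beta$.

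Finally I would collect the parameter choices: $\alpha$ and $N$ from the deterministic H\"older rate ($N\asymp D\,\mathrm{polylog}$), $B$ from the requirement $\|\nabla f(x^k)\|_2 \le \lambda_k/2$, and the (growing) batch sizes $m_k$ so that the variance terms contribute at most $\varepsilon A_N$; summing $\sum_k m_k$ gives the second entry $\tfrac{\sigma^2 R_0^2}{\varepsilon^2}\ln\tfrac{D}{\beta}$ in the oracle complexity \eqref{eq:clipped_SSTM_oracle_complexity_main}. The main obstacle is the circularity in the third step: Bernstein's inequality needs a priori boundedness of $z^k - x^*$ and of the conditional variance to deliver a \emph{logarithmic} $\beta$-dependence, yet that boundedness is exactly what the argument is trying to establish. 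Resolving this through a careful induction, while keeping the clipping bias negligible (which forces the coupled scaling $\lambda_k\sim 1/\alpha_k$ together with the batch-size schedule), is the delicate core that separates the logarithmic dependence obtained here from the negative-power dependence of the Markov-inequality baselines in Tbl.~\ref{tab:cvx_case_comparison}.
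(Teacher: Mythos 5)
Your plan follows the same architecture as the paper's actual proof of Theorem~\ref{thm:main_result_clipped_SSTM} in Appendix~\ref{sec:clipped_SSTM_cvx_appendix}: your telescoped potential inequality is exactly Lemma~\ref{lem:main_opt_lemma_clipped_SSTM} (including the cross term $\sum_k\alpha_{k+1}^2\langle\theta_{k+1},\nabla f(x^{k+1})\rangle$, which you omit from the list of error terms but which is handled by the same Bernstein argument), your bias/variance bounds for the clipped gradient are Lemma~\ref{lem:main_stoch_lemma_clipped_SSTM}, and the induction-plus-Bernstein scheme with a union bound over the $N$ steps is the body of the proof. There is, however, one genuine gap, and it sits exactly at the step you call the heart of the argument. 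You take the inductive event to be ball containment and claim that on it $\|\nabla f(x^k)\|_2$ is \emph{uniformly} bounded (by $M_\nu(3R_0)^\nu$), and that the clipping level can then be ``tuned'' so that $\|\nabla f(x^{k+1})\|_2\le \lambda_{k+1}/2$. For $\nu>0$ this fails: $\lambda_{k+1}=B/\alpha_{k+1}$ with $\alpha_{k+1}=\alpha(k+1)^{2\nu/(1+\nu)}$ decays polynomially in $k$, while $B$ cannot be taken large, because your own Bernstein step needs summands of size $\sim \alpha_{l+1}\lambda_{l+1}(R_0+B)=B(R_0+B)$ to total $\cO(R_0^2)$, forcing $B=\cO\left(R_0/\ln\frac{N}{\beta}\right)$. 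A constant gradient bound therefore cannot stay below $\lambda_{k+1}/2$ for all $k\le N$: already for $\nu=1$ one would need $3M_1R_0\lesssim M_1R_0\ln\frac{N}{\beta}/(k+1)$ for all $k\le N$, i.e.\ $N\lesssim\ln\frac{N}{\beta}$, whereas $N\sim \ln\frac{N}{\beta}\,R_0\sqrt{M_1/\varepsilon}$ is much larger in the relevant regime $\varepsilon\ll M_1R_0^2$.

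The paper escapes this by carrying a strictly stronger inductive hypothesis: the event $E_k$ asserts the whole chain of inequalities, which in particular yields the functional decay $f(y^t)-f(x^*)\le C^2R_0^2/(2A_t)$ along the trajectory (Lemma~\ref{lem:main_opt_lemma_clipped_SSTM} applied with $z=x^*$), not merely $\|z^t-x^*\|_2\le CR_0$. The gradient at the query point is then bounded by splitting $\|\nabla f(x^{t+1})\|_2\le\|\nabla f(x^{t+1})-\nabla f(y^t)\|_2+\|\nabla f(y^t)\|_2$, estimating the first term via H\"older continuity together with the identity $\alpha_{t+1}(x^{t+1}-z^t)=A_t(y^t-x^{t+1})$, and the second via Lemma~\ref{lem:gradient_bound}, $\|\nabla f(y^t)\|_2\lesssim M_\nu^{\frac{1}{1+\nu}}\left(f(y^t)-f(x^*)\right)^{\frac{\nu}{1+\nu}}$; both pieces decay with $t$ at a rate that can be matched against $\lambda_{t+1}$ (the $D_1+D_2\le 1$ computation), at the price of the technical restrictions \eqref{eq:epsilon_clipped_SSTM}--\eqref{eq:epsilon_clipped_SSTM_2} on $\varepsilon$ and $a$. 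So your plan is repairable, but you must fold the convergence estimate itself into the induction --- ball containment alone cannot keep the decaying clipping threshold above the gradient norm, and this is precisely where the $\nu$-dependent delicacy of the proof lives. (Your other flagged concern, the circularity in applying Bernstein, is resolved in the paper by the standard truncation device: the martingale summands are built from variables $\eta_l,\zeta_l$ that are set to zero off the ball, hence bounded almost surely, so Lemma~\ref{lem:Bernstein_ineq} applies unconditionally and the event it produces is intersected with $E_{T-1}$ afterwards.)
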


The obtained result has only logarithmic dependence on the confidence level $\beta$ and optimal dependence on the accuracy $\varepsilon$ up to logarithmic factors \revision{\citep{guzman2015lower, lan2012optimal}} and matches the state-of-the-art results in the light-tailed case \revision{\citep{dvurechensky2016stochastic, ghadimi2012optimal, nemirovski2009robust}} for all $\nu \in [0,1]$. Moreover, the complexity bounds from \revision{\citep{dvurechensky2016stochastic, ghadimi2012optimal, nemirovski2009robust}} are proportional to $\cO\left(\ln^2 \tfrac{1}{\beta}\right)$ (neglecting the dependence on $M_\nu$ and $R_0$), while our bound has better dependence on the power of the logarithm when $\nu > 0$. In particular, when $\nu = 1$, our bound is proportional to $\cO\left(\ln\tfrac{1}{\sqrt{\varepsilon}\beta}\right)$. When $\beta$ is small enough (of the same order with $\varepsilon$ or smaller), our logarithmic factor is much smaller than $\cO\left(\ln^2 \tfrac{1}{\beta}\right)$.

Next, we emphasize that our result does not require $f$ to have $(\nu,M_\nu)$-H\"older continuous gradient and the variance of the stochastic gradient to be uniformly bounded \textit{on the whole space}. To achieve this, we prove that for the proposed choice of parameters the iterates of \algname{clipped-SSTM} stay inside the ball $B_{3R_0} = \{x\in\R^n\mid \|x-x^*\|_2 \le 3R_0\}$ with probability at least $1 - \beta$, and, as a consequence, it is sufficient to assume that \eqref{eq:bounded_variance_clipped_SSTM} and \eqref{eq:holder_def} hold only inside this ball. In particular, this means that the better starting point leads not only to the reduction of $R_0$, but also it can reduce $M_\nu$ and $\sigma$. Moreover, our result is applicable to a much wider class of functions than the standard class of functions with H\"older continuous gradients in $\R^n$, e.g., to the problems with polynomial growth in both the gradient and the variance of the stochastic estimator.

For the strongly convex problems, we consider a restarted version of Algorithm~\ref{alg:clipped-SSTM} (\algname{R-clipped-SSTM}, see Algorithm~\ref{alg:R-clipped-SSTM}) and derive high-probability complexity result for this version.
Below we provide a simplified version of the result. The complete formulation and the full proof of the theorem are deferred to Section~\ref{sec:clipped_SSTM_str_cvx_appendix} (see Theorem~\ref{thm:main_result_clipped_SSTM_str_cvx}).
\begin{theorem}[Simplified version of Theorem~\ref{thm:main_result_clipped_SSTM_str_cvx}]\label{thm:main_result_clipped_SSTM_str_cvx_main}
    Assume that function $f$ is $\mu$-strongly convex,  its stochastic gradient and its gradient satisfy \eqref{eq:bounded_variance_clipped_SSTM} and \eqref{eq:holder_def} respectively with $\sigma > 0$, $\nu \in [0,1]$, $M_\nu > 0$ on $Q = B_{3R_0}(x^*) = \{x\in\R^n\mid \|x-x^*\|_2 \le 3R_0\}$, where $R_0 \ge \|x^0 - x^*\|_2$. Then there exists such a choice of parameters that \algname{R-clipped-SSTM} achieves $f(\hat{x}^\tau) - f(x^*) \le \varepsilon$ with probability at least $1-\beta$ after
    \begin{equation}
        \hat{N} = \revision{\cO}\left(D\ln^{\frac{2(1+\nu)}{1+3\nu}}\frac{D}{\beta}\right),\; D = \max\left\{\left(\frac{M_\nu}{\mu R_0^{1-\nu}}\right)^{\frac{2}{1+3\nu}}\ln\frac{\mu R_0^2}{\varepsilon}, \left(\frac{M_\nu^2}{\mu^{1+\nu}\varepsilon^{1-\nu}}\right)^{\frac{1}{1+3\nu}}\right\} \label{eq:clipped_SSTM_iter_complexity_str_cvx_main}
    \end{equation}
    iterations of Algorithm~\ref{alg:clipped-SSTM} in total and requires
    \begin{equation}
        \revision{\cO}\left(\max\left\{D\ln^{\frac{2(1+\nu)}{1+3\nu}}\frac{D}{\beta},\frac{\sigma^2}{\mu\varepsilon}\ln\frac{D}{\beta}\right\}\right)\text{ oracle calls.} \label{eq:clipped_SSTM_oracle_complexity_str_cvx_main}
    \end{equation}
\end{theorem}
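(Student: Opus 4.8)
The plan is to derive the strongly convex guarantee from the already-established convex one (Theorem~\ref{thm:main_result_clipped_SSTM_main}) by a restart argument. I would fix a geometric schedule of radii $R_t = R_0\cdot 2^{-t}$ and, at the $t$-th restart, run \algname{clipped-SSTM} from $\hat{x}^{t-1}$ aiming at the intermediate accuracy $\varepsilon_t \eqdef \nicefrac{\mu R_{t-1}^2}{8}$. The lever is $\mu$-strong convexity in the form $\tfrac{\mu}{2}\|x-x^*\|_2^2 \le f(x)-f(x^*)$: once a restart certifies $f(\hat{x}^t)-f(x^*)\le \varepsilon_t$, it follows that $\|\hat{x}^t-x^*\|_2^2 \le 2\varepsilon_t/\mu = R_{t-1}^2/4$, i.e. $\|\hat{x}^t-x^*\|_2 \le R_t$. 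Hence every successful restart halves the distance to $x^*$, and I would run $\tau = \cO(\ln(\mu R_0^2/\varepsilon))$ restarts, stopping once $\varepsilon_\tau \le \varepsilon$, which forces $f(\hat{x}^\tau)-f(x^*)\le\varepsilon$.

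For the per-restart step I would invoke Theorem~\ref{thm:main_result_clipped_SSTM_main} with starting radius $R_{t-1}$, target accuracy $\varepsilon_t$, and confidence level $\beta_t=\beta/\tau$, which fixes $N_t$, $\alpha^t$, $B_t$, and the batchsizes $m_k^t$. Substituting $\varepsilon_t=\nicefrac{\mu R_{t-1}^2}{8}$ into the convex complexity $D_t = M_\nu^{2/(1+3\nu)}R_{t-1}^{2(1+\nu)/(1+3\nu)}\varepsilon_t^{-2/(1+3\nu)}$ collapses the $R_{t-1}$-exponent to $-2(1-\nu)/(1+3\nu)$ and yields $D_t = \cO\big((M_\nu/(\mu R_{t-1}^{1-\nu}))^{2/(1+3\nu)}\big)$, so $N_t = \cO\big(D_t\ln^{2(1+\nu)/(1+3\nu)}(D_t\tau/\beta)\big)$. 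A key consistency check is that Theorem~\ref{thm:main_result_clipped_SSTM_main} also guarantees that the iterates of a restart started within radius $R_{t-1}$ stay in the ball of radius $3R_{t-1}\le 3R_0$ around $x^*$; since $R_{t-1}\le R_0$ for every $t$, all iterates of all restarts remain in $B_{3R_0}$, which is precisely where \eqref{eq:holder_def} is assumed, so the convex theorem is legitimately applicable at every stage.

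The probabilistic bookkeeping is the delicate part, and I expect it to be the main obstacle. I would introduce events $A_t = \{\|\hat{x}^t-x^*\|_2\le R_t\}$ and argue inductively that $\Prob(A_t\mid A_{t-1})\ge 1-\beta_t$: conditioned on $A_{t-1}$ and on the value of $\hat{x}^{t-1}$, restart $t$ draws fresh i.i.d.\ samples, so Theorem~\ref{thm:main_result_clipped_SSTM_main} applies verbatim for the (now deterministic) starting point $\hat{x}^{t-1}$ with $\|\hat{x}^{t-1}-x^*\|_2\le R_{t-1}$ and delivers $f(\hat{x}^t)-f(x^*)\le\varepsilon_t$, hence $A_t$, with probability at least $1-\beta_t$. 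The difficulty is exactly that the convex guarantee is phrased for a fixed initial point while here each restart starts at a random point; the resolution is that the guarantee holds uniformly over all admissible starting points, so conditioning on $\hat{x}^{t-1}$ and then applying the tower property is valid. A union bound then gives $\Prob(\cap_{t=1}^\tau A_t)\ge 1-\sum_t\beta_t = 1-\beta$, and on this event the output obeys $f(\hat{x}^\tau)-f(x^*)\le\varepsilon$.

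It remains to total the cost. With $R_{t-1}=R_0 2^{-(t-1)}$ the sum $\sum_{t=1}^\tau D_t$ is geometric with ratio $2^{2(1-\nu)/(1+3\nu)}\ge 1$: for $\nu<1$ bounded away from $1$ it is dominated up to a constant by its last term $D_\tau$, and substituting $R_{\tau-1}\asymp\sqrt{\varepsilon/\mu}$ reproduces $\big(M_\nu^2/(\mu^{1+\nu}\varepsilon^{1-\nu})\big)^{1/(1+3\nu)}$; for $\nu=1$ (and near the boundary) the ratio is one, the series is flat, and one pays the factor $\tau=\cO(\ln(\mu R_0^2/\varepsilon))$ on the per-restart cost, producing $\big(M_\nu/(\mu R_0^{1-\nu})\big)^{2/(1+3\nu)}\ln(\mu R_0^2/\varepsilon)$. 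The maximum of these two terms is the quantity $D$ in \eqref{eq:clipped_SSTM_iter_complexity_str_cvx_main}, and carrying the logarithmic factors $\ln(D_t\tau/\beta)=\cO(\ln(D/\beta))$ through gives $\hat{N}$. For the oracle complexity, the variance contributions $\sigma^2 R_{t-1}^2/\varepsilon_t^2 = \cO(\sigma^2/(\mu^2 R_{t-1}^2))$ form a geometric series with ratio $4$, dominated by the last restart and equal to $\cO(\sigma^2/(\mu\varepsilon))$; combining this with $\hat{N}$ yields \eqref{eq:clipped_SSTM_oracle_complexity_str_cvx_main}.
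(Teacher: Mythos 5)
Your proposal is correct and follows essentially the same route as the paper's proof of Theorem~\ref{thm:main_result_clipped_SSTM_str_cvx}: restart \algname{clipped-SSTM} with geometrically decreasing target accuracies, invoke the convex guarantee (Theorem~\ref{thm:main_result_clipped_SSTM_main}) at each stage with confidence level $\nicefrac{\beta}{\tau}$, use $\mu$-strong convexity to contract the starting radius, combine stages by induction and a union bound, and sum the resulting geometric series to obtain \eqref{eq:clipped_SSTM_iter_complexity_str_cvx_main} and \eqref{eq:clipped_SSTM_oracle_complexity_str_cvx_main}. The only differences are cosmetic (your schedule halves the distance per restart while the paper halves the function accuracy, and you spell out the conditioning/tower-property step that the paper leaves implicit), so they do not change the argument.
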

Again, the obtained result has only logarithmic dependence on the confidence level $\beta$ and, as our result in the convex case, it has optimal dependence on the accuracy $\varepsilon$ up to logarithmic factors depending on $\beta$ \revision{\citep{guzman2015lower, lan2012optimal}} for all $\nu \in [0,1]$.

\begin{algorithm}[h]
\caption{Restarted \algname{clipped-SSTM} (\algname{R-clipped-SSTM}): case $\nu \in [0,1]$}
\label{alg:R-clipped-SSTM}   
\begin{algorithmic}[1]
\Require starting point $x^0$, number of restarts $\tau$, number of steps of \algname{clipped-SSTM} in restarts $\{N_t\}_{t=1}^{\tau}$, batch sizes $\{m_{k}^1\}_{k=1}^{N_1-1}, \{m_{k}^2\}_{k=1}^{N_2-1}, \ldots, \{m_{k}^{\tau}\}_{k=1}^{N_{\tau}-1}$, stepsize parameters $\{\alpha^t\}_{t=1}^\tau$, clipping parameters $\{B_t\}_{t=1}^\tau$, H\"older exponent $\nu \in [0,1]$.
\State $\hat{x}^0 = x^0$
\For{$t=1,\ldots, \tau$}
\State Run \algname{clipped-SSTM} (Algorithm~\ref{alg:clipped-SSTM}) for $N_t$ iterations with batch sizes $\{m_k^t\}_{k=1}^{N_t-1}$, stepsize parameter $\alpha_{t}$, clipping parameter $B_t$, and starting point $\hat{x}^{t-1}$. Define the output of \algname{clipped-SSTM} by $\hat{x}^{t}$.
\EndFor
\Ensure $\hat{x}^\tau$ 
\end{algorithmic}
\end{algorithm}

\section{SGD with Clipping}\label{sec:clipped_SGD_main}

In this section, we present a new variant of \algname{clipped-SGD} \citep{pascanu2013difficulty} properly adjusted to the class of objectives with $(\nu, M_\nu)$-H\"older continuous gradients (see Algorithm~\ref{alg:clipped-SGD}). 

We emphasize that as for \algname{clipped-SSTM} we use clipping level $\lambda$ inversely proportional to the stepsize \revision{$\alpha$}. Below, we provide a simplified version of the main result for \algname{clipped-SGD} in the convex case. The complete formulation and the full proof of the theorem are deferred to Section~\ref{sec:clipped_SGD_cvx_appendix} (see Theorem~\ref{thm:main_result_clipped_SGD}).
\begin{theorem}[Simplified version of Theorem~\ref{thm:main_result_clipped_SGD}]\label{thm:main_result_clipped_SGD_main}
    Assume that function $f$ is convex,  its stochastic gradient and its gradient satisfy \eqref{eq:bounded_variance_clipped_SSTM} and \eqref{eq:holder_def} respectively with $\sigma > 0$, $\nu \in [0,1]$, $M_\nu > 0$ on $Q = B_{7R_0}(x^*) = \{x\in\R^n\mid \|x-x^*\|_2 \le 7R_0\}$, where $R_0 \ge \|x^0 - x^*\|_2$. Then there exists such a choice of parameters that \algname{clipped-SGD} achieves $f(\Bar{x}^N) - f(x^*) \le \varepsilon$ with probability at least $1-\beta$ after
    \begin{equation}
        \cO\left(\max\left\{D^2,D^{1+\nu}\ln\frac{D^2 + D^{1+\nu}}{\beta}\right\}\right),\quad D = \frac{M_\nu^{\frac{1}{1+\nu}}R_0}{\varepsilon^{\frac{1}{1+\nu}}} \label{eq:clipped_SGD_iter_complexity_main}
    \end{equation}
    iterations and requires
    \begin{equation}
       \cO\left(\max\left\{D^2,\max\left\{D^{1+\nu},\frac{\sigma^2R_0^2}{\varepsilon^2}\right\}\ln\frac{D^2 + D^{1+\nu}}{\beta}\right\}\right) \text{ oracle calls.} \label{eq:clipped_SGD_oracle_complexity_main}
    \end{equation}
\end{theorem}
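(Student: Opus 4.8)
The plan is to run the standard ``one-step distance decrease plus martingale concentration'' scheme for stochastic (sub)gradient methods, with the clipping bias handled explicitly. Writing $\tg^k = \tnabla f(x^k,\Bxi^k)$ for the clipped mini-batched gradient, the update $x^{k+1} = x^k - \gamma\tg^k$ gives
\[
\|x^{k+1}-x^*\|_2^2 = \|x^k-x^*\|_2^2 - 2\gamma\langle \tg^k, x^k - x^*\rangle + \gamma^2\|\tg^k\|_2^2 .
\]
I would decompose $\tg^k = \nabla f(x^k) + \theta^k_b + \theta^k_u$, where $\theta^k_b = \EE_k[\tg^k]-\nabla f(x^k)$ is the clipping bias and $\theta^k_u = \tg^k - \EE_k[\tg^k]$ is the zero-mean part (here $\EE_k$ is the conditional expectation given the history). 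Using convexity $\langle \nabla f(x^k),x^k-x^*\rangle \ge f(x^k)-f(x^*)$, telescoping over $k=0,\dots,N-1$, and applying Jensen's inequality to $\bar x^N$, the residual is bounded by
\[
f(\bar x^N)-f(x^*) \le \frac{R_0^2}{2\gamma N} + \frac{1}{N}\sum_{k}\langle -\theta^k_b-\theta^k_u, x^k-x^*\rangle + \frac{\gamma}{2N}\sum_k \|\tg^k\|_2^2 .
\]

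The first term is a purely deterministic $\mathcal{O}(R_0^2/(\gamma N))$ contribution. For the rest I would invoke the standard clipping estimates of \cite{gorbunov2020clipped_sstm}: provided $\|\nabla f(x^k)\|_2 \le \lambda/2$, the bias obeys $\|\theta^k_b\|_2 = \mathcal{O}(\sigma^2/(\lambda m))$, the conditional variance is $\mathcal{O}(\sigma^2/m)$, and the clipped vector satisfies $\|\tg^k\|_2 \le \lambda$ deterministically. The condition $\|\nabla f(x^k)\|_2\le\lambda/2$ is guaranteed as long as $x^k\in B_{7R_0}$: by Hölder continuity and $\nabla f(x^*)=0$ one has $\|\nabla f(x^k)\|_2 \le M_\nu\|x^k-x^*\|_2^\nu \le M_\nu(7R_0)^\nu$, so it suffices to set $\lambda = \nicefrac{B}{\gamma} \gtrsim M_\nu R_0^\nu$. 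For the quadratic term I would split $\|\tg^k\|_2^2 \lesssim \|\nabla f(x^k)\|_2^2 + \|\theta^k\|_2^2$ and use the $(\nu,M_\nu)$-Hölder gradient inequality (which, together with optimality, yields $\|\nabla f(x^k)\|_2^2 \lesssim M_\nu^{\frac{2}{1+\nu}}(f(x^k)-f(x^*))^{\frac{2\nu}{1+\nu}}$) followed by Young's inequality to absorb a linear-in-residual piece into the left-hand side once $\gamma$ is small enough; the leftover constant, summed and multiplied by $\gamma^2 M_\nu^{2/(1+\nu)}$, is precisely what produces the exponent $1+\nu$ in the complexity.

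The heart of the argument is controlling the martingale sum $\sum_k\langle -\theta^k_u, x^k-x^*\rangle$ (and the fluctuation of $\sum_k\|\tg^k\|_2^2$) with only \emph{logarithmic} dependence on $\beta$. Because clipping makes each $\theta^k_u$ bounded by $2\lambda$ and $\|x^k-x^*\|_2$ is controlled on the good event, the martingale increments are uniformly bounded and have bounded conditional variance. I would therefore apply a Bernstein/Freedman-type concentration inequality for martingale differences, giving a deviation of order $\sqrt{(\text{summed variance})\ln(\nicefrac1\beta)} + \lambda R_0\ln(\nicefrac1\beta)$. This is exactly the step where heavy-tailedness is defeated: without clipping the increments would be unbounded and only polynomial-in-$\beta$ tails would be available, which is why every previous non-smooth result in Tbl.~\ref{tab:cvx_case_comparison} carries a negative power of $\beta$.

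The main obstacle, and the thing the whole proof must be organized around, is the circular dependence between ``the iterates stay in $B_{7R_0}$'' (needed for the clipping estimates and for Hölder continuity to apply) and ``the error terms are small'' (needed to prove the iterates do not escape). I would resolve this by induction on $k$: define the event $E_k$ on which $\|x^t-x^*\|_2 \le 7R_0$ for all $t\le k$, and show $\PP(E_k)\ge 1-\nicefrac{k\beta}{N}$ by feeding the one-step distance recursion into the Bernstein bound applied to the martingale \emph{truncated} on $E_{k-1}$ (so the increments are unconditionally bounded). Closing the induction at $k=N$ shows that with probability at least $1-\beta$ all iterates remain in the ball and the displayed residual bound holds. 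Finally I would choose $\gamma$, $B$, $m$, and $N$ to balance the deterministic term $R_0^2/(\gamma N)$, the bias term $\sigma^2 R_0/(\lambda m)$, and the concentration term against $\varepsilon$; optimizing gives the iteration count $\mathcal{O}(\max\{D^2,D^{1+\nu}\ln\frac{D^2+D^{1+\nu}}{\beta}\})$, and choosing the batchsize $m$ to drive the variance contribution down to $\sigma^2R_0^2/\varepsilon^2$ yields the stated oracle complexity \eqref{eq:clipped_SGD_oracle_complexity_main}.
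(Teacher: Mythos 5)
Your proposal is correct and follows essentially the same route as the paper's own proof (Thm.~\ref{thm:main_result_clipped_SGD}): the same one-step distance recursion with the bias/zero-mean decomposition of the clipped gradient, the same H\"older-gradient-plus-Young bound on $\|\nabla f(x^k)\|_2^2$ (which the paper packages as Lemma~\ref{lem:gradient_bound_2}), the same Bernstein inequality for martingale differences applied to increments made unconditionally bounded by truncation, and the same induction on the event that all iterates stay in $B_{7R_0}(x^*)$, followed by the same balancing of $\gamma$, $\lambda$, $m$, and $N$. No gaps.
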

\begin{algorithm}[h]
\caption{Clipped Stochastic Gradient Descent (\algname{clipped-SGD}): case $\nu \in [0,1]$}
\label{alg:clipped-SGD}   
\begin{algorithmic}[1]
\Require starting point $x^0$, number of iterations $N$, batch size $m$, stepsize $\gamma$, clipping parameter $B > 0$.
\For{\revision{$k=0,1,\ldots, N-1$}}
\State Draw mini-batch of $m$ fresh i.i.d.\ samples $\xi_1^k,\ldots,\xi_{m}^k$ and compute $\nabla f(x^{k+1},\Bxi^k) = \frac{1}{m}\sum_{i=1}^{m}\nabla f(x^{k+1},\xi_i^k)$
\State Compute $\tnabla f(x^{k},\Bxi^k) = \clip(\nabla f(x^{k},\Bxi^k),\lambda)$ using \eqref{eq:clip_operator} with $\lambda = \nicefrac{B}{\gamma}$
\State $x^{k+1} = x^k - \gamma\tnabla f(x^{k},\Bxi^k)$
\EndFor
\Ensure $\Bar{x}^N = \frac{1}{N}\sum_{k=0}^{N-1}x^k$ 
\end{algorithmic}
\end{algorithm}
As all our results in the paper, this result for \algname{clipped-SGD} has two important features: 1) the dependence on the confidence level $\beta$ is logarithmic and 2) H\"older continuity and uniformly bounded variance assumptions are required only on the ball $B_{7R_0}(x^*)$ centered at the solution. Moreover, up to the difference in the expressions under the logarithm, the dependence on $\varepsilon$ in the result for \algname{clipped-SGD} is the same as in the tightest known results for non-accelerated \algname{SGD}-type methods \citep{devolder2013exactness, guigues2017non}. Finally, we emphasize that for $\nu < 1$ the logarithmic factors appearing in the complexity bound for \algname{clipped-SSTM} are worse than the corresponding factor in the complexity bound for \algname{clipped-SGD}. Therefore, \algname{clipped-SGD} has the best-known high-probability complexity results in the case when $\nu = 0$ and $f$ is convex. Furthermore, when $\nu =0$, our result has $\cO(\ln\tfrac{1}{\varepsilon^{2}\beta})$ logarithmic factor, while the best-known high-probability results under ``light tails'' assumption are proportional to $\cO(\ln^2\tfrac{1}{\beta})$ \revision{\citep{dvurechensky2016stochastic, nemirovski2009robust}}. When $\beta$ is small enough (of the same order with $\varepsilon$ or smaller), our logarithmic factor is much smaller than $\cO\left(\ln^2 \tfrac{1}{\beta}\right)$.

For the strongly convex problems, we consider a restarted version of Algorithm~\ref{alg:clipped-SGD} (\algname{R-clipped-SGD}, see Algorithm~\ref{alg:R-clipped-SGD}) and derive high-probability complexity result for this version.
\begin{algorithm}[h]
\caption{Restarted \algname{clipped-SGD} (\algname{R-clipped-SGD}): case $\nu \in [0,1]$}
\label{alg:R-clipped-SGD}   
\begin{algorithmic}[1]
\Require starting point $x^0$, number of restarts $\tau$, number of steps of \algname{clipped-SGD} in restarts $\{N_t\}_{t=1}^{\tau}$, batch sizes $\{m_{t}\}_{k=1}^{\tau}$, stepsizes $\{\gamma_t\}_{t=1}^\tau$, clipping parameters $\{B_t\}_{t=1}^\tau$
\State $\hat{x}^0 = x^0$
\For{$t=1,\ldots, \tau$}
\State Run \algname{clipped-SGD} (Algorithm~\ref{alg:clipped-SGD}) for $N_t$ iterations with batch size $m_t$, stepsize $\gamma_{t}$, clipping parameter $B_t$, and starting point $\hat{x}^{t-1}$. Define the output of \algname{clipped-SGD} by $\hat{x}^{t}$.
\EndFor
\Ensure $\hat{x}^\tau$ 
\end{algorithmic}
\end{algorithm}
Below we provide a simplified version of the result. The complete formulation and the full proof of the theorem are deferred to Section~\ref{sec:clipped_SGD_str_cvx_appendix} (see Theorem~\ref{thm:main_result_clipped_SGD_str_cvx}).
\begin{theorem}[Simplified version of Theorem~\ref{thm:main_result_clipped_SGD_str_cvx}]\label{thm:main_result_clipped_SGD_str_cvx_main}
    Assume that function $f$ is $\mu$-strongly convex, its stochastic gradient and its gradient satisfy \eqref{eq:bounded_variance_clipped_SSTM} and \eqref{eq:holder_def} respectively with $\sigma > 0$, $\nu \in [0,1]$, $M_\nu > 0$ on $Q = B_{7R_0}(x^*) = \{x\in\R^n\mid \|x-x^*\|_2 \le 7R_0\}$, where $R_0 \ge \|x^0 - x^*\|_2$. Then there exists such a choice of parameters that \algname{R-clipped-SGD} achieves $f(\Bar{x}^N) - f(x^*) \le \varepsilon$ with probability at least $1-\beta$ after  
    \begin{equation}
        \cO\left(\max\left\{D_1^{\frac{2}{1+\nu}}\ln\frac{\mu R_0^2}{\varepsilon}, D_2^{\frac{2}{1+\nu}}, \max\left\{D_1\ln\frac{\mu R_0^2}{\varepsilon},D_2\right\}\ln\frac{D}{\beta} \right\}\right) \notag
    \end{equation}
    iterations of Algorithm~\ref{alg:clipped-SGD} in total and requires
    \begin{equation*}
        \cO\left(\max\left\{D_1^{\frac{2}{1+\nu}}\ln\frac{\mu R_0^2}{\varepsilon}, D_2^{\frac{2}{1+\nu}}, \max\left\{D_1\ln\frac{\mu R_0^2}{\varepsilon},D_2, \frac{\sigma^2}{\mu\varepsilon}\right\}\ln\frac{D}{\beta} \right\}\right) \; %\text{oracle calls,} %\label{eq:clipped_SGD_oracle_complexity_str_cvx_main}
    \end{equation*}
    oracle calls, where
    \begin{equation*}
        D_1 = \frac{M_\nu}{\mu R_0^{1-\nu}},\quad D_2 = \frac{M_\nu}{\mu^{\frac{1+\nu}{2}}\varepsilon^{\frac{1-\nu}{2}}},\quad D = (D_1^{\frac{2}{1+\nu}}+D_1)\ln\frac{\mu R_0^2}{\varepsilon} + D_2 + D_2^{\frac{2}{1+\nu}}.
    \end{equation*}
\end{theorem}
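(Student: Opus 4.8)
The plan is to derive the strongly convex guarantee from the convex one (Theorem~\ref{thm:main_result_clipped_SGD_main}) by a geometric restart argument, exactly as \algname{R-clipped-SGD} (Alg.~\ref{alg:R-clipped-SGD}) is designed. First I would fix a schedule of target radii $R_t^2 = R_0^2/2^t$ and accuracies $\varepsilon_t = \mu R_{t-1}^2/4$, and run the $t$-th restart of \algname{clipped-SGD} from $\hat{x}^{t-1}$ with the parameters prescribed by the convex theorem for initial radius $R_{t-1}$, accuracy $\varepsilon_t$, and per-restart confidence $\beta_t = \beta/\tau$. The engine of the reduction is $\mu$-strong convexity: on the event that restart $t$ succeeds, $\tfrac{\mu}{2}\|\hat{x}^t-x^*\|_2^2 \le f(\hat{x}^t)-f(x^*)\le \varepsilon_t = \tfrac{\mu}{2}R_t^2$, so $\|\hat{x}^t-x^*\|_2\le R_t$ and the next restart may legitimately invoke the convex bound with the halved squared-radius. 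Since the radii shrink, all balls $B_{7R_{t-1}}$ are nested in $B_{7R_0}$, so H\"older continuity on $B_{7R_0}$ justifies every restart.

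Next I would count restarts and per-restart work. Stopping when $\varepsilon_\tau = \mu R_{\tau-1}^2/4 \le \varepsilon$ gives $\tau = \cO(\ln\tfrac{\mu R_0^2}{\varepsilon})$. For the $t$-th restart the quantity governing Theorem~\ref{thm:main_result_clipped_SGD_main} is $\cD_t = M_\nu^{1/(1+\nu)}R_{t-1}/\varepsilon_t^{1/(1+\nu)}$, and the clean identity $\cD_t^{1+\nu} = 4M_\nu/(\mu R_{t-1}^{1-\nu})$ shows that $\cD_t$ is constant in $t$ when $\nu=1$ and grows geometrically when $\nu<1$. Evaluating at the endpoints, $\cD_1^{1+\nu}\sim D_1$ (so $\cD_1^2\sim D_1^{2/(1+\nu)}$) and, using $R_{\tau-1}^2\sim \varepsilon/\mu$, $\cD_\tau^{1+\nu}\sim D_2$ (so $\cD_\tau^2\sim D_2^{2/(1+\nu)}$). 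The iteration count is $\sum_{t=1}^\tau N_t$ with $N_t = \cO(\max\{\cD_t^2,\ \cD_t^{1+\nu}\ln\tfrac{\cD_t^2+\cD_t^{1+\nu}}{\beta_t}\})$, while the oracle count carries the extra variance term $\sigma^2R_{t-1}^2/\varepsilon_t^2 = 16\sigma^2/(\mu^2R_{t-1}^2)$, whose largest (final) value is $\sim \sigma^2/(\mu\varepsilon)$.

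Summing is the heart of the matter, and the $\nu=1$ versus $\nu<1$ dichotomy is exactly what produces the two kinds of terms in the statement. For $\nu<1$ the sequences $\cD_t^2$ and $\cD_t^{1+\nu}$ are geometric, so the sums are dominated by the final restart, giving $D_2^{2/(1+\nu)}$ and $D_2$; for $\nu=1$ they are constant (and $D_1=D_2$), so each sum picks up the factor $\tau\sim\ln\tfrac{\mu R_0^2}{\varepsilon}$, giving $D_1^{2/(1+\nu)}\ln\tfrac{\mu R_0^2}{\varepsilon}$ and $D_1\ln\tfrac{\mu R_0^2}{\varepsilon}$. Taking maxima over the two regimes yields $\max\{D_1^{2/(1+\nu)}\ln\tfrac{\mu R_0^2}{\varepsilon},\,D_2^{2/(1+\nu)}\}$ from the $\cD_t^2$ part and $\max\{D_1\ln\tfrac{\mu R_0^2}{\varepsilon},\,D_2\}$ from the $\cD_t^{1+\nu}$ part; bounding each per-restart logarithm $\ln\tfrac{\cD_t^2+\cD_t^{1+\nu}}{\beta_t}$ uniformly by $\ln\tfrac{D}{\beta}$ (absorbing $\tau$ and the range of $\cD_t$ into $D$) reproduces the claimed complexity, and the variance contribution collapses to $\tfrac{\sigma^2}{\mu\varepsilon}\ln\tfrac{D}{\beta}$. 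A union bound over the $\tau$ restarts, each failing with probability at most $\beta/\tau$, gives overall success at least $1-\beta$.

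The main obstacle I anticipate is twofold. Analytically, the geometric sums must be controlled uniformly in $\nu$: as $\nu\uparrow 1$ the common ratio $2^{(1-\nu)/(1+\nu)}\downarrow 1$, so the naive ``dominated by the last term'' bound hides a constant $\sim 1/(1-2^{-(1-\nu)/(1+\nu)})$ that blows up, and one must instead use $\sum_{t\le\tau}a\,r^{t-1}\le \min\{\tau a r^{\tau-1},\,ar^{\tau}/(r-1)\}$ so that both the $D_1\ln(\cdot)$ and the $D_2$ branches emerge cleanly and agree at $\nu=1$. Probabilistically, the restarts are not independent — restart $t$ may use the convex guarantee only on the event that restart $t-1$ kept the iterate within $R_{t-1}$ — so the success events must be nested and the radius bound propagated along the chain before the union bound is applied; this is the same conditioning subtlety already present in the convex analysis, where one shows the iterates stay in $B_{7R_0}$ with high probability.
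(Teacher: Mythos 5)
Your proposal is correct and follows essentially the same route as the paper's proof: restart \algname{clipped-SGD} with geometrically halving radii and accuracies $\varepsilon_t = \mu R_0^2/2^{t+1}$, use $\mu$-strong convexity to convert each restart's function-value guarantee into the radius bound needed by the convex theorem (Thm.~\ref{thm:main_result_clipped_SGD}), apply a union bound with per-restart confidence $\beta/\tau$, and sum the per-restart complexities, which are geometric for $\nu<1$ (dominated by the $D_2$-type terms) and constant for $\nu=1$ (picking up the factor $\tau\sim\ln\frac{\mu R_0^2}{\varepsilon}$). Your two flagged obstacles are real but handled the same way implicitly in the paper (the paper's geometric-sum bound likewise hides a $\nu$-dependent constant, and its induction nests the success events exactly as you describe), so nothing essential is missing.
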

As in the convex case, for $\nu < 1$, the log-factors appearing in the complexity bound for \algname{R-clipped-SSTM} are worse than the corresponding factor in the bound for \algname{R-clipped-SGD}. Thus, \algname{R-clipped-SGD} has the best-known high-probability complexity results for strongly convex $f$ and $\nu = 0$.

\section{Clipped Similar Triangles Method: Missing Details and Proofs}\label{sec:clipped_SSTM_appendix}

\subsection{Convergence in the Convex Case}\label{sec:clipped_SSTM_cvx_appendix}
In this section, we provide the full proof of Theorem~\ref{thm:main_result_clipped_SSTM_main} together with a complete statement of the result.
\subsubsection{Two lemmas}
The analysis of \algname{clipped-SSTM} consists of \revision{three} main steps. The first one is an ``optimization lemma'' -- a modification of a standard lemma for Similar Triangles Method (see \citep{gasnikov2016universal} and Lemma~F.4 from \citep{gorbunov2020clipped_sstm}). This result helps to estimate the progress of the method after $N$ iterations.
\begin{lemma}\label{lem:main_opt_lemma_clipped_SSTM}
    Let $f$ be a convex function  with a minimum at some\footnote{ Our proofs are valid for any solution $x^*$ and, for example, one can take as $x^*$ the closest solution to the starting point $x^0$.}  point $x^*$, its gradient be $(\nu, M_\nu)$-H\"older continuous on a ball $B_{3R_0}(x^*)$, where $R_0 \ge \|x^0 - x^*\|_2$, and let stepsize parameter  \revision{$\alpha$ have the form $\alpha=\frac{ (\nicefrac{\varepsilon}{2})^{\frac{1-\nu}{1+\nu}}}{2^{\frac{2\nu}{1+\nu}}aM_\nu^{\frac{2}{1+\nu}}}$, where $a\ge 1$.}
    If $x^{k}, y^k, z^k \in B_{3R_0}(x^*)$ for all $k = 0,1,\ldots,N$, $N \ge 0$, then after $N$ iterations of \algname{clipped-SSTM} for all $z\in \R^n$ we have
    \begin{eqnarray}
        &&A_N\left(f(y^N) - f(z)\right) \le \frac{1}{2}\|z^0 - z\|_2^2 - \frac{1}{2}\|z^{N} - z\|_2^2 + \sum\limits_{k=0}^{N-1}\alpha_{k+1}\left\la \theta_{k+1}, z - z^{k}\right\ra\notag\\
        &&\quad + \sum\limits_{k=0}^{N-1}\alpha_{k+1}^2\left\|\theta_{k+1}\right\|_2^2 + \sum\limits_{k=0}^{N-1}\alpha_{k+1}^2\left\la\theta_{k+1},\nabla f(x^{k+1})\right\ra   + \frac{A_{N}\varepsilon}{4},\label{eq:main_opt_lemma_clipped_SSTM}\\
        &&\theta_{k+1} \eqdef \tnabla f(x^{k+1},\Bxi^k) - \nabla f(x^{k+1}).\label{eq:theta_k+1_def_clipped_SSTM}
    \end{eqnarray}
\end{lemma}
\begin{proof}
    Consider an arbitrary $k \in \{0,1,\ldots,N-1\}$. Using \\$z^{k+1} = z^k - \alpha_{k+1}\tnabla f(x^{k+1},\Bxi^k)$ we get that for all $z\in \R^n$
    \begin{eqnarray}
        &&\alpha_{k+1}\left\la\tnabla f(x^{k+1},\Bxi^k), z^k - z \right\ra \notag \\
        &=& \alpha_{k+1}\left\la \tnabla f(x^{k+1},\Bxi^{k}), z^k - z^{k+1}\right\ra + \alpha_{k+1}\left\la \tnabla f(x^{k+1},\Bxi^{k}), z^{k+1} - z\right\ra\notag\\
        &=& \alpha_{k+1}\left\la \tnabla f(x^{k+1},\Bxi^{k}), z^k - z^{k+1}\right\ra + \left\la z^{k+1}-z^k, z - z^{k+1}\right\ra \notag\\
        &\overset{\eqref{eq:inner_product_representation}}{=}& \alpha_{k+1}\left\la \tnabla f(x^{k+1},\Bxi^{k}), z^k - z^{k+1}\right\ra - \frac{1}{2}\|z^k - z^{k+1}\|_2^2 \notag\\
        &&\quad+ \frac{1}{2}\|z^k - z\|_2^2 - \frac{1}{2}\|z^{k+1}-z\|_2^2.\label{eq:main_olc_clipped_SSTM_technical_2}
    \end{eqnarray}
    Next, we notice that
    \begin{eqnarray}
         y^{k+1} &=& \frac{A_k y^k + \alpha_{k+1}z^{k+1}}{A_{k+1}} = \frac{A_k y^k + \alpha_{k+1}z^{k}}{A_{k+1}} + \frac{\alpha_{k+1}}{A_{k+1}}\left(z^{k+1}-z^k\right)\notag\\
         &=& x^{k+1} + \frac{\alpha_{k+1}}{A_{k+1}}\left(z^{k+1}-z^k\right) \label{eq:main_olc_clipped_SSTM_technical_3}
    \end{eqnarray}
    implying
    \begin{eqnarray}
        &\alpha_{k+1}&\left\la\tnabla f(x^{k+1},\Bxi^k), z^k - z \right\ra \notag \\
        &\overset{\eqref{eq:theta_k+1_def_clipped_SSTM},\eqref{eq:main_olc_clipped_SSTM_technical_2}}{\le} &\alpha_{k+1}\left\la \nabla f(x^{k+1}), z^{k} - z^{k+1}\right\ra - \frac{1}{2}\|z^k - z^{k+1}\|_2^2 \notag\\
        &&\quad + \alpha_{k+1}\left\la \theta_{k+1}, z^{k} - z^{k+1}\right\ra  + \frac{1}{2}\|z^k - z\|_2^2 - \frac{1}{2}\|z^{k+1}-z\|_2^2\notag\\
        &\overset{\eqref{eq:main_olc_clipped_SSTM_technical_3}}{=}& A_{k+1}\left\la \nabla f(x^{k+1}), x^{k+1} - y^{k+1}\right\ra - \frac{1}{2}\|z^k - z^{k+1}\|_2^2 \notag\\
        &&\quad + \alpha_{k+1}\left\la \theta_{k+1}, z^{k} - z^{k+1}\right\ra   + \frac{1}{2}\|z^k - z\|_2^2 - \frac{1}{2}\|z^{k+1}-z\|_2^2\notag\\
        &\overset{\eqref{eq:holder_cor2}}{\le}& A_{k+1}\left(f(x^{k+1}) - f(y^{k+1})\right)   + \frac{A_{k+1}L_{k+1}}{2}\|x^{k+1}-y^{k+1}\|_2^2\notag\\
        &&\quad + \frac{\alpha_{k+1}\varepsilon}{4} - \frac{1}{2}\|z^k - z^{k+1}\|_2^2  + \alpha_{k+1}\left\la \theta_{k+1}, z^{k} - z^{k+1}\right\ra\notag\\
        &&\quad + \frac{1}{2}\|z^k - z\|_2^2 - \frac{1}{2}\|z^{k+1}-z\|_2^2\notag\\
        &\overset{\eqref{eq:main_olc_clipped_SSTM_technical_3}}{=}& A_{k+1}\left(f(x^{k+1}) - f(y^{k+1})\right)  + \frac{1}{2}\left(\frac{\alpha_{k+1}^2L_{k+1}}{A_{k+1}} - 1\right)\|z^k - z^{k+1}\|_2^2\notag\\
        &&\quad + \alpha_{k+1}\left\la \theta_{k+1}, z^{k} - z^{k+1}\right\ra   + \frac{1}{2}\|z^k - z\|_2^2 - \frac{1}{2}\|z^{k+1}-z\|_2^2 + \frac{\alpha_{k+1}\varepsilon}{4},\notag
    \end{eqnarray}
    where in the third inequality we used $x^{k+1}, y^{k+1} \in B_{3R_0}(x^*)$ and \eqref{eq:holder_cor2} with $\delta = \frac{\alpha_{k+1}}{2A_{k+1}}\varepsilon$ and $L(\delta,\nu) = L_{k+1} = \left(\frac{2A_{k+1}}{\varepsilon\alpha_{k+1}}\right)^{\frac{1-\nu}{1+\nu}}M_\nu^{\frac{2}{1+\nu}}$. Since $A_{k+1} \ge aL_{k+1}\alpha_{k+1}^2$ (Lemma~\ref{lem:alpha_k_A_K_lemma}) and $a\ge 1$ we can continue our derivations as follows:
    \begin{eqnarray}
        &\alpha_{k+1}&\left\la\tnabla f(x^{k+1},\Bxi^k), z^k - z \right\ra \notag \\
        &\le& A_{k+1}\left(f(x^{k+1}) - f(y^{k+1})\right) + \alpha_{k+1}\left\la \theta_{k+1}, z^{k} - z^{k+1}\right\ra\notag\\
        &&\quad + \frac{1}{2}\|z^k - z\|_2^2 - \frac{1}{2}\|z^{k+1}-z\|_2^2 + \frac{\alpha_{k+1}\varepsilon}{4}. \label{eq:main_olc_clipped_SSTM_technical_4}
    \end{eqnarray}
    Next, due to the convexity of $f$, we have
    \begin{eqnarray}
        \left\la\tnabla f(x^{k+1},\Bxi^k), y^k - x^{k+1}\right\ra &\overset{\eqref{eq:theta_k+1_def_clipped_SSTM}}{=}& \left\la\nabla f(x^{k+1}), y^k - x^{k+1}\right\ra + \left\la \theta_{k+1}, y^k - x^{k+1}\right\ra\notag\\
        &\le& f(y^k) - f(x^{k+1}) + \left\la \theta_{k+1}, y^k - x^{k+1}\right\ra.\label{eq:main_olc_clipped_SSTM_technical_5}
    \end{eqnarray}
    By definition of $x^{k+1}$ we have $x^{k+1} = \frac{A_k y^k + \alpha_{k+1} z^k}{A_{k+1}}$ implying
    \begin{equation}
        \alpha_{k+1}\left(x^{k+1} - z^k\right) = A_k\left(y^k - x^{k+1}\right)\label{eq:main_olc_clipped_SSTM_technical_6}
    \end{equation}
    since $A_{k+1} = A_k + \alpha_{k+1}$. Putting all together, we derive that
    \begin{eqnarray*}
        &\alpha_{k+1}&\left\la\tnabla f(x^{k+1},\Bxi^k), x^{k+1} - z \right\ra \notag \\
        &=& \alpha_{k+1}\left\la\tnabla f(x^{k+1},\Bxi^k), x^{k+1} - z^k \right\ra  + \alpha_{k+1}\left\la\tnabla f(x^{k+1},\Bxi^k), z^k - z \right\ra\\
        &\overset{\eqref{eq:main_olc_clipped_SSTM_technical_6}}{=}& A_{k}\left\la\tnabla f(x^{k+1},\Bxi^k), y^{k} - x^{k+1} \right\ra + \alpha_{k+1}\left\la\tnabla f(x^{k+1},\Bxi^k), z^k - z \right\ra\\
        &\overset{\eqref{eq:main_olc_clipped_SSTM_technical_5},\eqref{eq:main_olc_clipped_SSTM_technical_4}}{\le}& A_k\left(f(y^k)-f(x^{k+1})\right) + A_k\left\la \theta_{k+1}, y^k - x^{k+1}\right\ra\\
        &&\quad + A_{k+1}\left(f(x^{k+1}) - f(y^{k+1})\right)\notag  + \alpha_{k+1}\left\la \theta_{k+1}, z^{k} - z^{k+1}\right\ra \\
        &&\quad + \frac{1}{2}\|z^k - z\|_2^2 - \frac{1}{2}\|z^{k+1}-z\|_2^2 + \frac{\alpha_{k+1}\varepsilon}{4}\\
        &\overset{\eqref{eq:main_olc_clipped_SSTM_technical_6}}{=}& A_kf(y^k) - A_{k+1}f(y^{k+1}) + \alpha_{k+1}\left\la \theta_{k+1}, x^{k+1}-z^k\right\ra\\
        &&\quad + \alpha_{k+1}f(x^{k+1}) + \alpha_{k+1}\left\la \theta_{k+1}, z^{k} - z^{k+1}\right\ra\\
        &&\quad + \frac{1}{2}\|z^k - z\|_2^2 - \frac{1}{2}\|z^{k+1}-z\|_2^2 + \frac{\alpha_{k+1}\varepsilon}{4}\\
        &=& A_kf(y^k) - A_{k+1}f(y^{k+1}) + \alpha_{k+1}f(x^{k+1})\\
        &&\quad + \alpha_{k+1}\left\la \theta_{k+1}, x^{k+1} - z^{k+1}\right\ra\\
        &&\quad + \frac{1}{2}\|z^k - z\|_2^2 - \frac{1}{2}\|z^{k+1}-z\|_2^2 + \frac{\alpha_{k+1}\varepsilon}{4}.
    \end{eqnarray*}
    Rearranging the terms, we get
    \begin{eqnarray*}
        &A_{k+1}&f(y^{k+1}) - A_kf(y^k) \\
        &\le& \alpha_{k+1}\left(f(x^{k+1}) + \left\la\tnabla f(x^{k+1},\Bxi^k), z-x^{k+1}\right\ra\right) + \frac{1}{2}\|z^k - z\|_2^2\\
        &&\quad - \frac{1}{2}\|z^{k+1} - z\|_2^2 + \alpha_{k+1}\left\la \theta_{k+1}, x^{k+1} - z^{k+1}\right\ra + \frac{\alpha_{k+1}\varepsilon}{4}\\
        &\overset{\eqref{eq:theta_k+1_def_clipped_SSTM}}{=}& \alpha_{k+1}\left(f(x^{k+1}) + \left\la\nabla f(x^{k+1}), z-x^{k+1}\right\ra\right)\\
        &&\quad + \alpha_{k+1}\left\la\theta_{k+1}, z-x^{k+1}\right\ra + \frac{1}{2}\|z^k - z\|_2^2 - \frac{1}{2}\|z^{k+1} - z\|_2^2\\
        &&\quad + \alpha_{k+1}\left\la \theta_{k+1}, x^{k+1} - z^{k+1}\right\ra + \frac{\alpha_{k+1}\varepsilon}{4}\\
        &\le& \alpha_{k+1}f(z) + \frac{1}{2}\|z^k - z\|_2^2 - \frac{1}{2}\|z^{k+1} - z\|_2^2   + \alpha_{k+1}\left\la \theta_{k+1}, z - z^{k+1}\right\ra + \frac{\alpha_{k+1}\varepsilon}{4}
    \end{eqnarray*}
    where in the last inequality, we use the convexity of $f$. Taking into account $A_0 = \alpha_0 = 0$ and $A_{N} = \sum_{k=0}^{N-1}\alpha_{k+1}$ we sum up these inequalities for \revision{$k=0,1,\ldots,N-1$} and get
    \begin{eqnarray*}
        &A_N&f(y^N) \\
        &\le& A_N f(z) + \frac{1}{2}\|z^0 - z\|_2^2 - \frac{1}{2}\|z^{N} - z\|_2^2 + \sum\limits_{k=0}^{N-1}\alpha_{k+1}\left\la \theta_{k+1}, z - z^{k+1}\right\ra+ \frac{A_{N}\varepsilon}{4}\\
        &=& A_N f(z) + \frac{1}{2}\|z^0 - z\|_2^2 - \frac{1}{2}\|z^{N} - z\|_2^2 + \sum\limits_{k=0}^{N-1}\alpha_{k+1}\left\la \theta_{k+1}, z - z^{k}\right\ra \\
        &&\quad + \sum\limits_{k=0}^{N-1}\alpha_{k+1}^2\left\la\theta_{k+1},\tnabla f(x^{k+1},\Bxi^k) \right\ra + \frac{A_{N}\varepsilon}{4}\\
        &\overset{\eqref{eq:theta_k+1_def_clipped_SSTM}}{=}& A_N f(z) + \frac{1}{2}\|z^0 - z\|_2^2 - \frac{1}{2}\|z^{N} - z\|_2^2 + \sum\limits_{k=0}^{N-1}\alpha_{k+1}\left\la \theta_{k+1}, z - z^{k}\right\ra\\
        &&\quad + \sum\limits_{k=0}^{N-1}\alpha_{k+1}^2\left\|\theta_{k+1}\right\|_2^2 + \sum\limits_{k=0}^{N-1}\alpha_{k+1}^2\left\la\theta_{k+1},\nabla f(x^{k+1})\right\ra + \frac{A_{N}\varepsilon}{4}
    \end{eqnarray*}
    that concludes the proof. \qed
\end{proof}

From Lemma~\ref{lem:alpha_k_A_K_lemma} we know that 
$
    A_N \sim \frac{N^{\frac{1+3\nu}{1+\nu}}\varepsilon^{\frac{1-\nu}{1+\nu}}}{M_\nu^{\frac{2}{1+\nu}}}.
$ 
Therefore, in view of Lemma~\ref{lem:main_opt_lemma_clipped_SSTM} (inequality \eqref{eq:main_opt_lemma_clipped_SSTM} with $z = x^*$), to derive the desired complexity bound from Theorem~\ref{thm:main_result_clipped_SSTM_main} it is sufficient to show that
\begin{equation*}
    \sum\limits_{k=0}^{N-1} \left(\alpha_{k+1}\left\la \theta_{k+1}, z - z^{k}\right\ra + %\sum\limits_{k=0}^{N-1}
    \alpha_{k+1}^2\left\|\theta_{k+1}\right\|_2^2 + %\sum\limits_{k=0}^{N-1}
    \alpha_{k+1}^2\left\la\theta_{k+1},\nabla f(x^{k+1})\right\ra\right) + \frac{A_{N}\varepsilon}{4} \lesssim R_0^2
\end{equation*}
with probability at least $1-\beta$. One possible way to achieve this goal is to apply some concentration inequality to these three sums. Since we use clipped stochastic gradients, under a proper choice of the clipping parameter, random vector $\theta_{k+1} = \tnabla f(x^{k+1},\Bxi^k) - \nabla f(x^{k+1})$ is bounded in $\ell_2$-norm by $2\lambda_{k+1}$ with high probability as well. Taking into account the assumption on the stochastic gradients (see \eqref{eq:bounded_variance_clipped_SSTM}), it is natural to apply Bernstein's inequality (see Lemma~\ref{lem:Bernstein_ineq}). Despite the seeming simplicity, this part of the proof is the trickiest one.

First of all, it is useful to derive tight enough upper bounds for bias, variance, and distortion of $\tnabla f(x^{k+1},\Bxi^k)$ -- this is the second step of the whole proof. Fortunately, Lemma~F.5 from \cite{gorbunov2020clipped_sstm} does exactly what we need in our proof and holds without any changes.
\begin{lemma}[Lemma~F.5 from \cite{gorbunov2020clipped_sstm}.]\label{lem:main_stoch_lemma_clipped_SSTM}
    For all $k\ge 0$, the following inequality holds:
    \begin{equation}
        \left\|\tnabla f(x^{k+1},\Bxi^{k}) - \EE_{\Bxi^k}\left[\tnabla f(x^{k+1},\Bxi^{k})\right]\right\|_2 \le 2\lambda_{k+1}.\label{eq:magnitude_bound_clipped_SSTM}
    \end{equation}
    Moreover, if the stochastic gradient satisfies \eqref{eq:bounded_variance_clipped_SSTM} on $Q = B_{3R_0}(x^*)$ and \\ $\|\nabla f(x^{k+1})\|_2 \le \frac{\lambda_{k+1}}{2}$ for some $k\ge 0$, then for this $k$ we have:
    \begin{eqnarray}
        \left\|\EE_{\Bxi^k}\left[\tnabla f(x^{k+1},\Bxi^{k})\right] - \nabla f(x^{k+1})\right\|_2 &\le& \frac{4\sigma^2}{m_k\lambda_{k+1}},\label{eq:bias_bound_clipped_SSTM}\\
        \EE_{\Bxi^k}\left[\left\|\tnabla f(x^{k+1},\Bxi^{k}) - \nabla f(x^{k+1})\right\|_2^2\right] &\le& \frac{18\sigma^2}{m_k},\label{eq:distortion_bound_clipped_SSTM}\\
        \EE_{\Bxi^k}\left[\left\|\tnabla f(x^{k+1},\Bxi^{k}) - \EE_{\Bxi^k}\left[\tnabla f(x^{k+1},\Bxi^{k})\right]\right\|_2^2\right] &\le& \frac{18\sigma^2}{m_k}. \label{eq:variance_bound_clipped_SSTM}
    \end{eqnarray}
\end{lemma}

\subsubsection{Proof of the Main Result}
The final,  third step of the proof consists of providing explicit formulas and bounds for the parameters of the method and derivation of the desired result using induction and Bernstein's inequality. Below, we provide the complete statement of Theorem~\ref{thm:main_result_clipped_SSTM_main}. 
\begin{theorem}\label{thm:main_result_clipped_SSTM}
    Assume that function $f$ is convex,  achieves minimum value at some\footnote{ Our proofs are valid for any solution $x^*$ and, for example, one can take $x^*$ as the closest solution to the starting point $x^0$. } $x^*$, its stochastic gradient and its gradient satisfy \eqref{eq:bounded_variance_clipped_SSTM} and \eqref{eq:holder_def} respectively with $\sigma > 0$, $\nu \in [0,1]$, $M_\nu > 0$ on $Q = B_{3R_0}(x^*)$, where $R_0 \ge \|x^0 - x^*\|_2$. 
    \revision{Let $\beta \in (0,1)$ and $N\ge 1$ be arbitrary such that 
    \begin{equation}
        \ln\frac{4N}{\beta} \ge 2 \label{eq:beta_N_condition_clipped_SSTM}
    \end{equation}
    and let the parameters of \algname{clipped-SSTM} satisfy\footnote{\revision{The choice of the parameters (in this and the following results) is dictated by the need to estimate and control the stochastic error in the proofs. If some of the parameters (such as $\nu, R_0, M_\nu, \sigma$) are unknown, one can directly tune parameters $\alpha, a, m_k$. To satisfy \eqref{eq:epsilon_clipped_SSTM} and \eqref{eq:epsilon_clipped_SSTM_2} it sufficient to choose sufficiently large $a$ (or, alternatively, sufficiently small $\varepsilon$).}}
    \begin{equation}
        \alpha = \frac{
        \varepsilon^{\frac{1-\nu}{1+\nu}}}{2aM_\nu^{\frac{2}{1+\nu}}},\quad m_k = \max\left\{1, \frac{20736N\sigma^2\alpha_{k+1}^2\ln \frac{4N}{\beta}}{C^2R_0^2}\right\},\label{eq:bathces_clipped_SSTM}
    \end{equation}
    \begin{equation}
        B = \frac{CR_0}{16\ln\frac{4N}{\beta}},\quad a \ge 16384\ln^2\frac{4N}{\beta}, \label{eq:B_a_parameters_clipped_SSTM}
    \end{equation}
    \begin{equation}
        \varepsilon^{\frac{1-\nu}{1+\nu}} \le \frac{aCM_\nu^{\frac{1-\nu}{1+\nu}}R_0^{1-\nu}}{16\ln\frac{4N}{\beta}},\quad \varepsilon \le \frac{2^{\frac{1+\nu}{2}}a^{\frac{1+\nu}{2}}C^{1+\nu}R_0^{1+\nu}M_\nu}{100^{\frac{1+3\nu}{2}}}, \label{eq:epsilon_clipped_SSTM}
    \end{equation}
    \begin{eqnarray}
         \varepsilon^{\frac{1-\nu}{1+3\nu}} &\le& \min\Bigg\{\frac{a^{\frac{2+3\nu-\nu^2}{2(1+3\nu)}}}{2^{4+4\nu+\frac{3+8\nu-5\nu^2-6\nu^3}{(1+\nu)(1+3\nu)}}\ln\frac{4N}{\beta}},\notag\\
         &&\quad\quad\quad\frac{a^{\frac{(1+\nu)^2}{1+3\nu}}}{2^{5+8\nu+\frac{2+9\nu+7\nu^2-3\nu^3 + \nu^4}{(1+\nu)(1+3\nu)}}\ln^{1+\nu}\frac{4N}{\beta}}\Bigg\}C^{\frac{1-\nu^2}{1+3\nu}}R_0^{\frac{1-\nu^2}{1+3\nu}}M_\nu^{\frac{1-\nu}{1+3\nu}},\label{eq:epsilon_clipped_SSTM_2} 
    \end{eqnarray}
    \begin{eqnarray}
        \varepsilon &\leq& \frac{2^{\frac{1+\nu}{2}}a^{\frac{1+\nu}{2}}C^{1+\nu}R_0^{1+\nu}M_\nu}{N^{\frac{1+3\nu}{2}}} + \frac{1}{N^{\frac{1+3\nu}{2}}}.\label{eq:epsilon_clipped_SSTM_3}
    \end{eqnarray}
    Then, after $N$ iterations of \algname{clipped-SSTM}, with probability at least $1-\beta$, it holds that
    \begin{equation}
        f(y^N) - f(x^*) \le \frac{4aC^2R_0^2M_\nu^{\frac{2}{1+\nu}}}{N^{\frac{1+3\nu}{1+\nu}}\varepsilon^{\frac{1-\nu}{1+\nu}}}, \label{eq:main_result_clipped_SSTM}
    \end{equation}
    where
    \begin{equation}
        %N = \left\lceil\frac{2^{\frac{1+\nu}{1+3\nu}}a^{\frac{1+\nu}{1+3\nu}}C^{\frac{2(1+\nu)}{1+3\nu}}R_0^{\frac{2(1+\nu)}{1+3\nu}}M_\nu^{\frac{2}{1+3\nu}}}{\varepsilon^{\frac{2}{1+3\nu}}}\right\rceil + 1,\quad 
        C = \sqrt{7}. \label{eq:C_definition_clipped_SSTM}
    \end{equation}
    }

    In other words, if we choose $a = 16384\ln^2\frac{4N}{\beta}$, then the method achieves $f(y^N) - f(x^*) \le \varepsilon$ with probability at least $1-\beta$ \revision{
    after $\cO\left(D\ln^{\frac{2(1+\nu)}{1+3\nu}}\frac{D}{\beta}\right)$ iterations with $D = \frac{M_\nu^{\frac{2}{1+3\nu}}R_0^{\frac{2(1+\nu)}{1+3\nu}}}{\varepsilon^{\frac{2}{1+3\nu}}}$ and requires
    \begin{equation}
        \cO\left(\max\left\{D\ln^{\frac{2(1+\nu)}{1+3\nu}}\frac{D}{\beta},\frac{\sigma^2R_0^2}{\varepsilon^2}\ln\frac{D}{\beta}\right\}\right)\text{ oracle calls.} \label{eq:clipped_SSTM_oracle_complexity}
    \end{equation}}    
    % $\cO\left(\frac{M_\nu^{\frac{2}{1+3\nu}}R_0^{\frac{2(1+\nu)}{1+3\nu}}}{\varepsilon^{\frac{2}{1+3\nu}}}\ln^{\frac{2(1+\nu)}{1+3\nu}}\frac{M_\nu^{\frac{2}{1+3\nu}}R_0^{\frac{2(1+\nu)}{1+3\nu}}}{\varepsilon^{\frac{2}{1+3\nu}}\beta}\right)$ iterations and requires
    % \begin{equation}
    %     \cO\left(\max\left\{\frac{M_\nu^{\frac{2}{1+3\nu}}R_0^{\frac{2(1+\nu)}{1+3\nu}}}{\varepsilon^{\frac{2}{1+3\nu}}}\ln^{\frac{2(1+\nu)}{1+3\nu}}\frac{M_\nu^{\frac{2}{1+3\nu}}R_0^{\frac{2(1+\nu)}{1+3\nu}}}{\varepsilon^{\frac{2}{1+3\nu}}\beta},\frac{\sigma^2R_0^2}{\varepsilon^2}\ln\frac{M_\nu^{\frac{2}{1+3\nu}}R_0^{\frac{2(1+\nu)}{1+3\nu}}}{\varepsilon^{\frac{2}{1+3\nu}}\beta}\right\}\right) 
    %     %\label{eq:clipped_SSTM_oracle_complexity}
    % \end{equation}
    % oracle calls.
\end{theorem}
\begin{proof}
    \revision{The proof of this result (and the following ones) is induction-based: we will show by induction that the iterates stay in some bounded ball around $x^*$ with high probability. This will allow us to apply Bernstein-type concentration inequality to estimate the stochastic sums appearing in the upper bounds.}

    First of all, we notice that for each $k\ge 0$ iterates $x^{k+1}, z^k, y^k$ lie in the ball $B_{\widetilde{R}_k}(x^*)$, where $R_k = \|z^k - x^*\|_2$, $\widetilde{R}_0 = R_0$, $\widetilde{R}_{k+1} = \max\{\widetilde{R}_k, R_{k+1}\}$. We prove it using induction. Since $y^0 = z^0 = x^0$, $\widetilde{R}_0 = R_0 \ge \|z^0 - x^*\|_2$ and $x^1 = \frac{A_0y^0 + \alpha_1z^0}{A_1} = z^0$ we have that $x^{1}, z^0, y^0 \in B_{\widetilde{R}_0}(x^*)$. Next, assume that $x^{l}, z^{l-1}, y^{l-1} \in B_{\widetilde{R}_{l-1}}(x^*)$ for some $l\ge 1$. By definitions of $R_l$ and $\widetilde{R}_l$ we have that $z^l \in B_{R_{l}}(x^*)\subseteq B_{\widetilde{R}_{l}}(x^*)$. Since $y^l$ is a convex combination of $y^{l-1}\in B_{\widetilde{R}_{l-1}}(x^*)\subseteq B_{\widetilde{R}_{l}}(x^*)$\revision{, and} $z^l\in B_{\widetilde{R}_{l}}(x^*)$\revision{, and} $B_{\widetilde{R}_{l}}(x^*)$ is a convex set we conclude that $y^l \in B_{\widetilde{R}_{l}}(x^*)$. Finally, since $x^{l+1}$ is a convex combination of $y^l$ and $z^l$ we have that $x^{l+1}$ lies in $B_{\widetilde{R}_{l}}(x^*)$ as well.

    Next, our goal is to prove via induction that for all $k=0,1,\ldots, N$ \revision{we have $\PP\{E_k\} = 1 - \frac{k\beta}{N}$ for probability event $E_k$ defined as follows:}

    \revision{\begin{center}
        Event $E_k$:
    \end{center}
    \noindent\fbox{%
    \parbox{\textwidth}{%
    Inequalities
    \begin{eqnarray}
         R_t^2 &\le& R_0^2 + 2\sum\limits_{l=0}^{t-1}\alpha_{l+1}\left\la\theta_{l+1}, x^* - z^l\right\ra + 2\sum\limits_{l=0}^{t-1}\alpha_{l+1}^2\left\la\theta_{l+1}, \nabla f(x^{l+1})\right\ra \notag\\
         &&\quad + 2\sum\limits_{l=0}^{t-1}\alpha_{k+1}^2\|\theta_{l+1}\|_2^2 + \frac{A_{N}\varepsilon}{2} \le C^2R_0^2 \label{eq:main_thm_clipped_SSTM_technical_2}
    \end{eqnarray}
    hold for $t=0,1,\ldots,k$ simultaneously where $C$ is defined in \eqref{eq:C_definition_clipped_SSTM}.
    }
    }}
    
    For $t = 0$ inequality \eqref{eq:main_thm_clipped_SSTM_technical_2} holds with probability $1$ since $C \ge 1$, hence $\PP\{E_0\} = 1$. Next, assume that for some $k = T-1 \le N-1$ we have $\PP\{E_k\} = \PP\{E_{T-1}\} \ge 1 - \frac{(T-1)\beta}{N}$. Let us prove that $\PP\{E_{T}\} \ge 1 - \frac{T\beta}{N}$. First of all, since $R_{T-1}$ implies $R_t \le CR_0$ for all $t = 0,1,\ldots,T-1$ we have that $\widetilde{R}_{T-1} \le CR_0$, and, as a consequence, $z^{T-1} \in B_{CR_0}(x^*)$. Therefore, probability event $E_{T-1}$ implies
    \begin{eqnarray*}
         & & \hspace{-2em}\|z^{T} - x^*\|_2 \\
         &=& \|z^{T-1} - x^* - \alpha_{T}\tnabla f(x^{T},\Bxi^{T-1})\|_2 \le \|z^{T-1} - x^*\|_2 + \alpha_T \|\tnabla f(x^{T},\Bxi^{T-1})\|_2\\
         &\le& CR_0 + \alpha_T\lambda_T = \left(1 + \frac{1}{16\ln\frac{4N}{\beta}}\right)CR_0 \overset{\eqref{eq:beta_N_condition_clipped_SSTM},\eqref{eq:C_definition_clipped_SSTM}}{\le} \left(1 + \frac{1}{32}\right)\sqrt{7} R_0 \le 3R_0,
    \end{eqnarray*}
    hence $\widetilde{R}_T \le 3R_0$. Then, one can apply Lemma~\ref{lem:main_opt_lemma_clipped_SSTM} and get that probability event $E_{T-1}$ implies
    \begin{eqnarray}
        A_t\left(f(y^t) - f(x^*)\right) &\le& \frac{1}{2}\|z^0 - x^*\|_2^2 - \frac{1}{2}\|z^{t} - x^*\|_2^2 + \sum\limits_{k=0}^{t-1}\alpha_{k+1}\left\la \theta_{k+1}, x^* - z^{k}\right\ra\notag\\
        &&\hspace{-3em} + \sum\limits_{k=0}^{t-1}\alpha_{k+1}^2\left\|\theta_{k+1}\right\|_2^2 + \sum\limits_{k=0}^{t-1}\alpha_{k+1}^2\left\la\theta_{k+1},\nabla f(x^{k+1})\right\ra + \frac{A_{t}\varepsilon}{4},\label{eq:main_thm_clipped_SSTM_technical_0}\\
        \theta_{k+1} &\eqdef& \tnabla f(x^{k+1},\Bxi^k) - \nabla f(x^{k+1})\label{eq:main_thm_clipped_SSTM_technical_theta}
    \end{eqnarray}
    for all \revision{$t = 0,1,\ldots,T$}. Taking into account that $f(y^t) - f(x^*) \ge 0$ for all $y^t$ we derive that probability event $E_{T-1}$ implies
    \begin{eqnarray}
         R_t^2 &\le& R_0^2 + 2\sum\limits_{l=0}^{t-1}\alpha_{l+1}\left\la\theta_{l+1}, x^* - z^l\right\ra + 2\sum\limits_{l=0}^{t-1}\alpha_{l+1}^2\left\la\theta_{l+1}, \nabla f(x^{l+1})\right\ra\notag\\
         &&\quad + 2\sum\limits_{l=0}^{t-1}\alpha_{l+1}^2\|\theta_{l+1}\|_2^2 + \frac{A_{t}\varepsilon}{2}. \label{eq:main_thm_clipped_SSTM_technical_1}
    \end{eqnarray}
    for all $t = 0,1, \ldots, T$.
    
    The rest of the proof is based on the refined analysis of inequality \eqref{eq:main_thm_clipped_SSTM_technical_1}. First of all, when $\nu = 0$ \revision{from \eqref{eq:holder_gradient_bound}} for all $t\ge 0$ we have
    \begin{eqnarray}
        \left\|\nabla f(x^{t+1})\right\|_2 &\le& M_0 \revision{\overset{\eqref{eq:B_a_parameters_clipped_SSTM}}{=}} \frac{16M_0B\ln\frac{4N}{\beta}}{CR_0} \revision{\overset{\eqref{eq:epsilon_clipped_SSTM}}{\le}} \frac{aM_0^2B}{\varepsilon} = \frac{B}{2\alpha_{t+1}} = \frac{\lambda_{t+1}}{2}. \notag
    \end{eqnarray}
    % where we use $B = \frac{CR_0}{16\ln\frac{4N}{\beta}}$ and $\varepsilon \le \frac{aCM_0 R_0}{16\ln\frac{4N}{\beta}}$. 
    Next, we prove that $\|\nabla f(x^{t+1})\|_2 \le \frac{\lambda_{t+1}}{2}$ when $\nu > 0$. For $t = 0$ we have
    \begin{eqnarray}
        \|\nabla f(x^1)\|_2 &=& \|\nabla f(z^0)\|_2 \overset{\eqref{eq:holder_def}}{\le} M_\nu\|z^0 - x^*\|_2^{\nu} \le M_\nu R_0^\nu \revision{\overset{\eqref{eq:epsilon_clipped_SSTM}}{\leq} \frac{2^{\frac{1-\nu}{1+\nu}}aC R_0 M_\nu^{\frac{2}{1+\nu}}}{32\varepsilon^{\frac{1-\nu}{1+\nu}}\ln\frac{4N}{\beta}}}\notag \\
        &\revision{\overset{\eqref{eq:B_a_parameters_clipped_SSTM}, \eqref{eq:epsilon_clipped_SSTM}}{\le}}& \frac{B}{2\alpha_1} = \frac{\lambda_1}{2}.\notag
    \end{eqnarray}
    % since $\varepsilon^{\frac{1-\nu}{1+\nu}} \le \frac{aCM_\nu^{\frac{1-\nu}{1+\nu}}R_0^{1-\nu}}{16\ln\frac{4N}{\beta}}$. 
    For $0< t \le T-1$ probability event $E_{T-1}$ implies
    \begin{eqnarray}
        &&\hspace{-7em}\|\nabla f(x^{t+1})\|_2\notag\\ &\le& \|\nabla f(x^{t+1}) - \nabla f(y^t)\|_2 + \|\nabla f(y^{t})\|_2    \notag\\
        &\overset{\eqref{eq:holder_def}, \revision{\text{ Lemma \ref{lem:gradient_bound}}}}{\le}& M_\nu \|x^{t+1}-y^t\|_2^\nu + \left(\frac{1+\nu}{\nu}\right)^{\frac{\nu}{1+\nu}}M_\nu^{\frac{1}{1+\nu}} \left(f(y^t) - f(x^*)\right)^{\frac{\nu}{1+\nu}} \notag\\
        &\revision{\overset{\eqref{eq:main_opt_lemma_clipped_SSTM}, \eqref{eq:main_olc_clipped_SSTM_technical_6},\eqref{eq:main_thm_clipped_SSTM_technical_2}}{\le}}& M_\nu \left(\frac{\alpha_{t+1}}{A_t}\right)^\nu\|x^{t+1}-z^t\|_2^\nu + \left(\frac{1+\nu}{\nu}\right)^{\frac{\nu}{1+\nu}}M_\nu^{\frac{1}{1+\nu}} \left(\frac{C^2R_0^2}{2A_t}\right)^{\frac{\nu}{1+\nu}}\notag\\
        &=& \frac{\lambda_{t+1}}{2}\cdot\underbrace{\frac{2M_\nu}{\lambda_{t+1}} \left(\frac{\alpha_{t+1}}{A_t}\right)^\nu\|x^{t+1}-z^t\|_2^\nu}_{D_1} \notag\\
        &&\quad + \frac{\lambda_{t+1}}{2}\cdot \underbrace{\left(\frac{1+\nu}{\nu}\right)^{\frac{\nu}{1+\nu}}\frac{2M_\nu^{\frac{1}{1+\nu}}}{\lambda_{t+1}} \left(\frac{C^2R_0^2}{2A_t}\right)^{\frac{\nu}{1+\nu}}}_{D_2}.\notag
    \end{eqnarray}
    Next, we show that $D_1 + D_2 \le 1$. Using the definition of $\lambda_{t+1}$\revision{, the definition of $\alpha_{t+1} = \alpha (t+1)^{\frac{2\nu}{1+\nu}}$}, triangle inequality $\|x^{t+1} - z^t\|_2 \le \|x^{t+1} - x^*\|_2 + \|z^t - x^*\|_2 \le 2CR_0$, and lower bound \eqref{eq:A_k_lower_bound} for $A_t$ (see Lemma~\ref{lem:alpha_k_A_K_lemma}) we derive
    \begin{eqnarray*}
        D_1 &=& \frac{\revision{2^{\nu+5}}M_\nu\alpha_{t+1}^{1+\nu}\ln\frac{4N}{\beta}}{C^{1-\nu}R_0^{1-\nu}A_t^\nu} \revision{\overset{\eqref{eq:bathces_clipped_SSTM}}{=}} \frac{\revision{2^{\nu+5}}M_\nu (t+1)^{2\nu}(\nicefrac{\varepsilon}{2})^{1-\nu}\ln\frac{4N}{\beta}}{2^{2\nu}a^{1+\nu}C^{1-\nu}R_0^{1-\nu}M_\nu^2A_t^\nu}\\
        &\overset{\eqref{eq:A_k_lower_bound}}{\le}& \frac{\revision{2^{4}}(t+1)^{2\nu}\varepsilon^{1-\nu}\ln\frac{4N}{\beta}}{a^{1+\nu}C^{1-\nu}R_0^{1-\nu}M_\nu}\cdot \frac{2^{\frac{(1+3\nu)\nu}{1+\nu}}a^\nu M_\nu^{\frac{2\nu}{1+\nu}}}{t^{\frac{(1+3\nu)\nu}{1+\nu}}(\nicefrac{\varepsilon}{2})^{\frac{\nu(1-\nu)}{1+\nu}}}\\
        &=& \frac{(t+1)^{2\nu}}{t^{\frac{\nu(1+3\nu)}{1+\nu}}}\cdot \frac{\revision{2^{4+2\nu}}\varepsilon^{\frac{1-\nu}{1+\nu}}\ln\frac{4N}{\beta}}{aM_\nu^{\frac{1-\nu}{1+\nu}}C^{1-\nu}R_0^{1-\nu}} ~\revision{\overset{\frac{t+1}{t} \leq 2}{\le}}~ \frac{\revision{2^{4+4\nu}}t^{\frac{\nu(1-\nu)}{1+\nu}}\varepsilon^{\frac{1-\nu}{1+\nu}}\ln\frac{4N}{\beta}}{aM_\nu^{\frac{1-\nu}{1+\nu}}C^{1-\nu}R_0^{1-\nu}} \\
        &\revision{\overset{t\leq N-1, \eqref{eq:epsilon_clipped_SSTM_3}}{\le}}& \frac{\revision{2^{4+4\nu}}\varepsilon^{\frac{1-\nu}{1+\nu}}\ln\frac{4N}{\beta}}{aM_\nu^{\frac{1-\nu}{1+\nu}}C^{1-\nu}R_0^{1-\nu}}\cdot \frac{2^{\frac{2\nu(1-\nu)(1+2\nu)}{(1+\nu)(1+3\nu)}}a^{\frac{\nu(1-\nu)}{1+3\nu}}C^{\frac{2\nu(1-\nu)}{1+3\nu}}R_0^{\frac{2\nu(1-\nu)}{1+3\nu}}M_\nu^{\frac{2\nu(1-\nu)}{(1+\nu)(1+3\nu)}}}{\varepsilon^{\frac{2\nu(1-\nu)}{(1+\nu)(1+3\nu)}}}\\
        &=& \frac{\revision{2^{4+4\nu+\frac{2\nu(1-\nu)(1+2\nu)}{(1+\nu)(1+3\nu)}}}\varepsilon^{\frac{1-\nu}{1+3\nu}}\ln\frac{4N}{\beta}}{a^{\frac{(1+\nu)^2}{1+3\nu}}M_\nu^{\frac{1-\nu}{1+3\nu}}C^{\frac{(1-\nu)(1+\nu)}{1+3\nu}}R_0^{\frac{(1-\nu)(1+\nu)}{1+3\nu}}} \overset{\eqref{eq:epsilon_clipped_SSTM_2}}{\le} \frac{1}{2^{\frac{3+6\nu - 7\nu^2-2\nu^3}{(1+\nu)(1+3\nu)}} a^{\frac{\nu}{2}}}.
    \end{eqnarray*}
    Applying the same inequalities and $\left(\frac{1+\nu}{\nu}\right)^{\frac{\nu}{1+\nu}} \le 2$ we estimate $D_2$:
    \begin{eqnarray*}
        D_2 &=& \left(\frac{1+\nu}{\nu}\right)^{\frac{\nu}{1+\nu}} \frac{2^{\revision{5}-\frac{\nu}{1+\nu}}M_\nu^{\frac{1}{1+\nu}}\alpha_{t+1}\ln\frac{4N}{\beta}}{C^{\frac{1-\nu}{1+\nu}}R_0^{\frac{1-\nu}{1+\nu}}A_t^{\frac{\nu}{1+\nu}}} \\&\le& 2\cdot\frac{2^{\revision{5}-\frac{\nu}{1+\nu}}M_\nu^{\frac{1}{1+\nu}}\ln\frac{4N}{\beta}}{C^{\frac{1-\nu}{1+\nu}}R_0^{\frac{1-\nu}{1+\nu}}A_t^{\frac{\nu}{1+\nu}}}\cdot\frac{(t+1)^{\frac{2\nu}{1+\nu}}(\nicefrac{\varepsilon}{2})^{\frac{1-\nu}{1+\nu}}}{2^{\frac{2\nu}{1+\nu}}a M_\nu^{\frac{2}{1+\nu}}} \\
        &\le& \frac{2^{\revision{5}-\frac{\nu}{1+\nu}}\cdot 2^{\frac{2\nu}{1+\nu}}t^{\frac{2\nu}{1+\nu}}\varepsilon^{\frac{1-\nu}{1+\nu}}\ln\frac{4N}{\beta}}{aC^{\frac{1-\nu}{1+\nu}}R_0^{\frac{1-\nu}{1+\nu}}M_\nu^{\frac{1}{1+\nu}}A_t^{\frac{\nu}{1+\nu}}}\\
        &\overset{\eqref{eq:A_k_lower_bound}}{\le}& \frac{2^{\revision{5}+\frac{\nu}{1+\nu}}t^{\frac{2\nu}{1+\nu}}\varepsilon^{\frac{1-\nu}{1+\nu}}\ln\frac{4N}{\beta}}{aC^{\frac{1-\nu}{1+\nu}}R_0^{\frac{1-\nu}{1+\nu}}M_\nu^{\frac{1}{1+\nu}}} \cdot \frac{2^{\frac{\nu(1+3\nu)}{(1+\nu)^2}}a^{\frac{\nu}{1+\nu}}M_\nu^{\frac{2\nu}{(1+\nu)^2}}}{t^{\frac{\nu(1+3\nu)}{(1+\nu)^2}}(\nicefrac{\varepsilon}{2})^{\frac{\nu(1-\nu)}{(1+\nu)^2}}}\\
        &=& \frac{2^{\revision{5}+\frac{3\nu}{1+\nu}}t^{\frac{\nu(1-\nu)}{(1+\nu)^2}}\varepsilon^{\frac{1-\nu}{(1+\nu)^2}}\ln\frac{4N}{\beta}}{a^{\frac{1}{1+\nu}}C^{\frac{1-\nu}{1+\nu}}R_0^{\frac{1-\nu}{1+\nu}}M_\nu^{\frac{1-\nu}{(1+\nu)^2}}}\\
        &\revision{\overset{t\leq N-1, \eqref{eq:epsilon_clipped_SSTM_3}}{\le}}& \frac{2^{\revision{5}+\frac{3\nu}{1+\nu}}\varepsilon^{\frac{1-\nu}{(1+\nu)^2}}\ln\frac{4N}{\beta}}{a^{\frac{1}{1+\nu}}C^{\frac{1-\nu}{1+\nu}}R_0^{\frac{1-\nu}{1+\nu}}M_\nu^{\frac{1-\nu}{(1+\nu)^2}}} \\
        && \hspace{1em}\cdot \frac{2^{\frac{2\nu(1+2\nu)(1-\nu)}{(1+\nu)^2(1+3\nu)}}a^{\frac{\nu(1-\nu)}{(1+\nu)(1+3\nu)}}C^{\frac{2\nu(1-\nu)}{(1+\nu)(1+3\nu)}}R_0^{\frac{2\nu(1-\nu)}{(1+\nu)(1+3\nu)}}M_\nu^{\frac{2\nu(1-\nu)}{(1+\nu)^2(1+3\nu)}}}{\varepsilon^{\frac{2\nu(1-\nu)}{(1+\nu)^2(1+3\nu)}}}\\
        &=& \frac{2^{\revision{5}+\frac{3\nu}{1+\nu}+\frac{2\nu(1+2\nu)(1-\nu)}{(1+\nu)^2(1+3\nu)}}\varepsilon^{\frac{1-\nu}{(1+\nu)(1+3\nu)}}\ln\frac{4N}{\beta}}{a^{\frac{1+\nu}{1+3\nu}}C^{\frac{1-\nu}{1+3\nu}}R_0^{\frac{1-\nu}{1+3\nu}}M_\nu^{\frac{1-\nu}{(1+\nu)(1+3\nu)}}} \overset{\eqref{eq:epsilon_clipped_SSTM_2}}{\le} \frac{1}{2^{\frac{2+5\nu+\nu^3}{(1+\nu)^2(1+3\nu)}}}.
    \end{eqnarray*}
    Combining the upper bounds for $D_1$ and $D_2$ we get
    \begin{eqnarray*}
        D_1 + D_2 &\le& \frac{1}{2^{\frac{3+6\nu - 7\nu^2-2\nu^3}{(1+\nu)(1+3\nu)}} a^{\frac{\nu}{2}}} + \frac{1}{2^{\frac{2+5\nu+\nu^3}{(1+\nu)^2(1+3\nu)}}}.
    \end{eqnarray*}
    Since $\frac{2+5\nu+\nu^3}{(1+\nu)^2(1+3\nu)}$ is a decreasing function of $\nu$ for $\nu\in[0,1]$ we continue as
    \begin{eqnarray*}
        D_1 + D_2 &\le& \frac{1}{2^{\frac{3+6\nu - 7\nu^2-2\nu^3}{(1+\nu)(1+3\nu)}} a^{\frac{\nu}{2}}} + \frac{1}{\sqrt{2}}.
    \end{eqnarray*}
    Next, we use \revision{$a \overset{\eqref{eq:B_a_parameters_clipped_SSTM}}{\ge} 16384\ln^2\frac{4N}{\beta} \overset{\eqref{eq:beta_N_condition_clipped_SSTM}}{\ge} 2^{10}$} and obtain
    \begin{eqnarray*}
        D_1 + D_2 &\le& \frac{1}{2^{\frac{3+11\nu + 13\nu^2 + 13\nu^3}{(1+\nu)(1+3\nu)}}} + \frac{1}{\sqrt{2}}.
    \end{eqnarray*}
    One can numerically verify that $\frac{1}{2^{\frac{3+11\nu + 13\nu^2 + 13\nu^3}{(1+\nu)(1+3\nu)}}} + \frac{1}{\sqrt{2}}$ is smaller than $1$ for $\nu\in[0,1]$. Putting all together, we conclude that probability event $E_{T-1}$ implies
    \begin{equation}
        \|\nabla f(x^{t+1})\|_2 \le \frac{\lambda_{t+1}}{2} \label{eq:main_thm_clipped_SSTM_technical_4}
    \end{equation}
    for all $t = 0,1,\ldots, T-1$. Having inequality \eqref{eq:main_thm_clipped_SSTM_technical_4} in hand, we show in the rest of the proof that \eqref{eq:main_thm_clipped_SSTM_technical_2} holds for $t = T$ with large enough probability. First of all, we introduce new random variables:
    \begin{equation}
        \eta_l = \begin{cases}x^* - z^l,&\text{if } \|x^* - z^l\|_2 \le CR_0,\\ 0,&\text{otherwise,} \end{cases},  \zeta_l = \begin{cases}\nabla f(x^{l+1}),&\text{if } \|\nabla f(x^{l+1})\|_2 \le \frac{B}{2\alpha_{l+1}},\\ 0,&\text{otherwise,} \end{cases} \label{eq:main_thm_clipped_SSTM_technical_4_1}
    \end{equation}
    for $l=0,1,\ldots, T-1$. Note that these random variables are bounded with probability $1$, i.e.\ with probability $1$, we have
    \begin{equation}
        \|\eta_l\|_2 \le CR_0\quad \text{and}\quad \|\zeta_l\|_2 \le \frac{B}{2\alpha_{l+1}}.\label{eq:main_thm_clipped_SSTM_technical_4_2}
    \end{equation}
    Secondly, we use the introduced notation and get that $E_{T-1}$ implies 
    \begin{eqnarray*}
         &&\hspace{-7em}R_T^2 \\ &\overset{\eqref{eq:main_thm_clipped_SSTM_technical_1},\eqref{eq:main_thm_clipped_SSTM_technical_2},\eqref{eq:main_thm_clipped_SSTM_technical_4},\eqref{eq:main_thm_clipped_SSTM_technical_4_1}}{\le}& R_0^2 + 2\sum\limits_{l=0}^{T-1}\alpha_{l+1}\left\la\theta_{l+1}, \eta_l\right\ra + 2\sum\limits_{l=0}^{T-1}\alpha_{l+1}^2\|\theta_{l+1}\|_2^2\notag\\
         &&\quad + 2\sum\limits_{l=0}^{T-1}\alpha_{l+1}^2\left\la\theta_{l+1}, \zeta_l\right\ra + \frac{A_{N}\varepsilon}{2}\notag\\
        &=& R_0^2 + \sum\limits_{l=0}^{T-1}\alpha_{l+1}\left\la\theta_{l+1}, 2\eta_l + 2\alpha_{l+1}\zeta_l\right\ra + 2\sum\limits_{l=0}^{T-1}\alpha_{l+1}^2\|\theta_{l+1}\|_2^2 + \frac{A_{N}\varepsilon}{2}.
    \end{eqnarray*}
    Finally, we do some preliminaries in order to apply Bernstein's inequality (see Lemma~\ref{lem:Bernstein_ineq}) and obtain that $E_{T-1}$ implies
    \begin{eqnarray}
        R_T^2 &\overset{\eqref{eq:squared_norm_sum}}{\le}& R_0^2 + \underbrace{\sum\limits_{l=0}^{T-1}\alpha_{l+1}\left\la\theta_{l+1}^u, 2\eta_l + 2\alpha_{l+1}\zeta_l\right\ra}_{\circledOne}+ \underbrace{\sum\limits_{l=0}^{T-1}\alpha_{l+1}\left\la\theta_{l+1}^b, 2\eta_l + 2\alpha_{l+1}\zeta_l\right\ra}_{\circledTwo}\notag\\
        &&\quad + \underbrace{\sum\limits_{l=0}^{T-1}4\alpha_{l+1}^2\left(\|\theta_{l+1}^u\|_2^2 - \EE_{\Bxi^l}\left[\|\theta_{l+1}^u\|_2^2\right]\right)}_{\circledThree} + \underbrace{\sum\limits_{l=0}^{T-1}4\alpha_{l+1}^2\EE_{\Bxi^l}\left[\|\theta_{l+1}^u\|_2^2\right]}_{\circledFour}\notag\\
        &&\quad + \underbrace{\sum\limits_{l=0}^{T-1}4\alpha_{l+1}^2\|\theta_{l+1}^b\|_2^2}_{\circledFive} + \frac{A_{N}\varepsilon}{2}\label{eq:main_thm_clipped_SSTM_technical_5}
    \end{eqnarray}
    where we introduce new notations:
    \begin{equation}
        \theta_{l+1}^u \eqdef \tnabla f(x^{l+1},\Bxi^l) - \EE_{\Bxi^l}\left[\tnabla f(x^{l+1},\Bxi^l)\right],  \theta_{l+1}^b \eqdef \EE_{\Bxi^l}\left[\tnabla f(x^{l+1},\Bxi^l)\right] - \nabla f(x^{l+1}),\label{eq:main_thm_clipped_SSTM_technical_6}
    \end{equation}
    \begin{equation}
        \theta_{l+1} \overset{\eqref{eq:theta_k+1_def_clipped_SSTM}}{=} \theta_{l+1}^u +\theta_{l+1}^b.\notag
    \end{equation}
    It remains to provide tight upper bounds for $\circledOne$, $\circledTwo$, $\circledThree$, $\circledFour$ and $\circledFive$, i.e.\ in the remaining part of the proof we show that $\circledOne+\circledTwo+\circledThree+\circledFour+\circledFive \le \delta C^2R_0^2$ for some $\delta < 1$.
    
    \textbf{Upper bound for $\circledOne$.} First of all, since $\EE_{\Bxi^l}[\theta_{l+1}^u] = 0$ \revision{and random variables $\eta_l,\zeta_l$ are independent from $\Bxi^l$ (which is a collection of i.i.d. samples)} summands in $\circledOne$ are conditionally unbiased:
    \begin{equation*}
        \EE_{\Bxi^l}\left[\alpha_{l+1}\left\la\theta_{l+1}^u, 2\eta_l + 2\alpha_{l+1}\zeta_l\right\ra\right] = 0.
    \end{equation*}
    Secondly, these summands are bounded with probability $1$:
    \begin{eqnarray*}
        &&\hspace{-5em}\left|\alpha_{l+1}\left\la\theta_{l+1}^u, 2\eta_l + 2\alpha_{l+1}\zeta_l\right\ra\right|\\
        &\le& \alpha_{l+1}\|\theta_{l+1}^u\|_2\left\|2\eta_l + 2\alpha_{l+1}\zeta_l\right\|_2\\
        &\overset{\eqref{eq:magnitude_bound_clipped_SSTM},\eqref{eq:main_thm_clipped_SSTM_technical_4_2}}{\le}& 2\alpha_{l+1}\lambda_{l+1}\left(2CR_0 + B\right) = 2B(2CR_0+B)\\
        &=& \left(1 + \frac{1}{32\ln\frac{4N}{\beta}}\right)\frac{C^2R_0^2}{4\ln\frac{4N}{\beta}} \overset{\eqref{eq:beta_N_condition_clipped_SSTM}}{\le} \left(1 + \frac{1}{64}\right)\frac{C^2R_0^2}{4\ln\frac{4N}{\beta}}.
    \end{eqnarray*}
    Finally, one can bound conditional variances $\sigma_l^2 \eqdef \EE_{\Bxi^l}\left[\alpha_{l+1}^2\left\la\theta_{l+1}^u, 2\eta_l + 2\alpha_{l+1}\zeta_l\right\ra^2\right]$ in the following way:
    \begin{eqnarray}
         \sigma_l^2 &\le& \EE_{\Bxi^l}\left[\alpha_{l+1}^2\left\|\theta_{l+1}^u\right\|_2^2 \left\|2\eta_l + 2\alpha_{l+1}\zeta_l\right\|_2^2\right]\notag\\
         &\overset{\eqref{eq:main_thm_clipped_SSTM_technical_4_2}}{\le}&\alpha_{l+1}^2\EE_{\Bxi^l}\left[\left\|\theta_{l+1}^u\right\|_2^2\right](2CR_0+B)^2 \notag \\
         &=& 4\alpha_{l+1}^2\EE_{\Bxi^l}\left[\left\|\theta_{l+1}^u\right\|_2^2\right]\left(1 + \frac{1}{32\ln\frac{4N}{\beta}}\right)^2C^2R_0^2\notag\\
         &\overset{\eqref{eq:beta_N_condition_clipped_SSTM}}{\le}& 4\alpha_{l+1}^2\EE_{\Bxi^l}\left[\left\|\theta_{l+1}^u\right\|_2^2\right]\left(1 + \frac{1}{64}\right)^2C^2R_0^2 , \label{eq:main_thm_clipped_SSTM_technical_7}
    \end{eqnarray}
    i.e., $\sigma_l^2$ is finite due to finiteness of $\|\theta_{l+1}^u\|_2$ (see Lemma~\ref{lem:main_stoch_lemma_clipped_SSTM}). In other words, sequence $\left\{\alpha_{l+1}\left\la\theta_{l+1}^u, 2\eta_l + 2\alpha_{l+1}\zeta_l\right\ra\right\}_{l\ge 0}$ is a bounded martingale difference sequence with bounded conditional variances $\{\sigma_l^2\}_{l\ge 0}$. Thus, we can apply Bernstein's inequality, i.e.\ we apply Lemma~\ref{lem:Bernstein_ineq} with $X_l = \alpha_{l+1}\left\la\theta_{l+1}^u, 2\eta_l + 2\alpha_{l+1}\zeta_l\right\ra$, $c = \left(1 + \frac{1}{64}\right)\frac{C^2R_0^2}{4\ln\frac{4N}{\beta}}$ and $F = \frac{c^2\ln\frac{4N}{\beta}}{18}$ \revision{(The choice of $F$ will be clarified below.)} and get that for all $b > 0$
    \begin{equation*}
        \PP\left\{\left|\sum\limits_{l=0}^{T-1}X_l\right| > b\text{ and } \sum\limits_{l=0}^{T-1}\sigma_l^2 \le F\right\} \le 2\exp\left(-\frac{b^2}{2F + \nicefrac{2cb}{3}}\right)
    \end{equation*}
    or, equivalently, with probability at least $1 - 2\exp\left(-\frac{b^2}{2F + \nicefrac{2cb}{3}}\right)$
    \begin{equation*}
        \text{either } \sum\limits_{l=0}^{T-1}\sigma_l^2 > F \quad \text{or} \quad \underbrace{\left|\sum\limits_{l=0}^{T-1}X_l\right|}_{|\circledOne|} \le b.
    \end{equation*}
    Let us now choose $b$ in such a way that $2\exp\left(-\frac{b^2}{2F + \nicefrac{2cb}{3}}\right) = \frac{\beta}{2N}$. This implies that $b$ is the positive root of the quadratic equation \begin{equation*}
        b^2 - \frac{2c\ln\frac{4N}{\beta}}{3}b - 2F\ln\frac{4N}{\beta} = 0,
    \end{equation*}
    hence
    \begin{eqnarray*}
         b &=& \frac{c\ln\frac{4N}{\beta}}{3} + \sqrt{\frac{c^2\ln^2\frac{4N}{\beta}}{9} + 2F\ln\frac{4N}{\beta}}\revision{=}\frac{c\ln\frac{4N}{\beta}}{3} + \sqrt{\frac{2c^2\ln^2\frac{4N}{\beta}}{9}}\\
         &=& \frac{1+\sqrt{2}}{3}c\ln\frac{4N}{\beta} \le c\ln\frac{4N}{\beta} = \left(1 + \frac{1}{64}\right)\frac{C^2R_0^2}{4} = \left(\frac{1}{4} + \frac{1}{256}\right)C^2R_0^2.
    \end{eqnarray*}
    That is, with probability at least $1 - \frac{\beta}{2N}$
     \begin{equation*}
        \underbrace{\text{either } \sum\limits_{l=0}^{T-1}\sigma_l^2 > F \quad \text{or} \quad \left|\circledOne\right| \le \left(\frac{1}{4} + \frac{1}{256}\right)C^2R_0^2}_{\text{probability event } E_{\circledOne}}.
    \end{equation*}
    Here and below, we notice that the conditions of Lemma~\ref{lem:main_stoch_lemma_clipped_SSTM} hold when $E_{T-1}$ holds, since event $E_{T-1}$ implies that $x^0, x^1, \ldots, x^T$ lie in $B_{3R_0}(x^*)$. Therefore, probability event $E_{T-1}$ implies that
    \begin{eqnarray*}
        \sum\limits_{l=0}^{T-1}\sigma_l^2  &\overset{\eqref{eq:main_thm_clipped_SSTM_technical_7}}{\le}& 4\left(1 + \frac{1}{64}\right)^2C^2R_0^2\sum\limits_{l=0}^{T-1}\alpha_{l+1}^2\EE_{\Bxi^l}\left[\left\|\theta_{l+1}^u\right\|_2^2\right]\\
        &\overset{\eqref{eq:variance_bound_clipped_SSTM},\eqref{eq:main_thm_clipped_SSTM_technical_4}}{\le}& 72\left(1 + \frac{1}{64}\right)^2\sigma^2C^2R_0^2\sum\limits_{l=0}^{T-1}\frac{\alpha_{l+1}^2}{m_l}\\
        &\overset{\eqref{eq:bathces_clipped_SSTM}}{\le}& \frac{\left(1 + \frac{1}{64}\right)^2C^4R_0^4}{288\ln\frac{4N}{\beta}}\sum\limits_{l=0}^{T-1}\frac{1}{N}\\
        &\overset{T \le N}{\le}& \frac{\left(1 + \frac{1}{64}\right)^2C^4R_0^4}{288\ln\frac{4N}{\beta}} =  \frac{c^2\ln\frac{4N}{\beta}}{18} = F.
    \end{eqnarray*}
    
    \textbf{Upper bound for $\circledTwo$.} The probability event $E_{T-1}$ implies
    \begin{eqnarray*}
        && \hspace{-5em}\alpha_{l+1}\left\la\theta_{l+1}^b, 2\eta_l + 2\alpha_{l+1}\zeta_l\right\ra \\
        &\le& \alpha_{l+1}\left\|\theta_{l+1}^b\right\|_2\left\|2\eta_l + 2\alpha_{l+1}\zeta_l\right\|_2\\
        &\overset{\eqref{eq:bias_bound_clipped_SSTM},\eqref{eq:main_thm_clipped_SSTM_technical_4_2}}{\le}& \alpha_{l+1}\cdot\frac{4\sigma^2}{m_l\lambda_{l+1}}\left(2CR_0 + B\right)\\
        &=& \frac{4\sigma^2\alpha_{l+1}^2}{m_l}\left(1 + \frac{2CR_0}{B}\right) = \frac{4\sigma^2\alpha_{l+1}^2\left(1 + 32\ln\frac{4N}{\beta}\right)}{m_l}\\
        &\overset{\eqref{eq:bathces_clipped_SSTM}}{\le}& \frac{4\left(\frac{1}{\ln\frac{4N}{\beta}} + 32\right)C^2R_0^2}{20736N} \overset{\eqref{eq:beta_N_condition_clipped_SSTM}}{\le} \frac{11C^2R_0^2}{1728N}.
    \end{eqnarray*}
    This implies that
    \begin{eqnarray*}
        \circledTwo &=& \sum\limits_{l=0}^{T-1}\alpha_{l+1}\left\la\theta_{l+1}^b,2\eta_l + 2\alpha_{l+1}\zeta_l\right\ra \overset{T\le N}{\le} \frac{11C^2R_0^2}{1728}.
    \end{eqnarray*}
    
    \textbf{Upper bound for $\circledThree$.} We derive the upper bound for $\circledThree$ using the same technique as for $\circledOne$. First of all, we notice that the summands in $\circledThree$ are conditionally unbiased:
    \begin{equation*}
        \EE_{\Bxi^l}\left[4\alpha_{l+1}^2\left(\|\theta_{l+1}^u\|_2^2 - \EE_{\Bxi^l}\left[\|\theta_{l+1}^u\|_2^2\right]\right)\right] = 0.
    \end{equation*}
    Secondly, the summands are bounded with probability $1$:
    \begin{eqnarray}
        \hspace{-2em} \left|4\alpha_{l+1}^2\left(\|\theta_{l+1}^u\|_2^2 - \EE_{\Bxi^l}\left[\|\theta_{l+1}^u\|_2^2\right]\right)\right| &\le& 4\alpha_{l+1}^2\left(\|\theta_{l+1}^u\|_2^2 + \EE_{\Bxi^l}\left[\|\theta_{l+1}^u\|_2^2\right]\right)\notag\\
       &\overset{\eqref{eq:magnitude_bound_clipped_SSTM}}{\le}& 4\alpha_{l+1}^2\left(4\lambda_{l+1}^2 + 4\lambda_{l+1}^2\right)\notag\\
          &=& 32B^2 = \frac{C^2R_0^2}{8\ln^2\frac{4N}{\beta}} \overset{\eqref{eq:beta_N_condition_clipped_SSTM}}{\le} \frac{C^2R_0^2}{16\ln\frac{4N}{\beta}} \eqdef c_1.\label{eq:main_thm_clipped_SSTM_technical_8}
    \end{eqnarray}
    Finally, one can bound conditional variances \\$\hat \sigma_l^2 \eqdef \EE_{\Bxi^l}\left[\left|4\alpha_{l+1}^2\left(\|\theta_{l+1}^u\|_2^2 - \EE_{\Bxi^l}\left[\|\theta_{l+1}^u\|_2^2\right]\right)\right|^2\right]$ in the following way:
    \begin{eqnarray}
         \hat \sigma_l^2 &\overset{\eqref{eq:main_thm_clipped_SSTM_technical_8}}{\le}& c_1\EE_{\Bxi^l}\left[\left|4\alpha_{l+1}^2\left(\|\theta_{l+1}^u\|_2^2 - \EE_{\Bxi^l}\left[\|\theta_{l+1}^u\|_2^2\right]\right)\right|\right]\notag\\
         &\le& 4c_1\alpha_{l+1}^2\EE_{\Bxi^l}\left[\|\theta_{l+1}^u\|_2^2 + \EE_{\Bxi^l}\left[\|\theta_{l+1}^u\|_2^2\right]\right] = 8c_1\alpha_{l+1}^2\EE_{\Bxi^l}\left[\|\theta_{l+1}^u\|_2^2\right],\label{eq:main_thm_clipped_SSTM_technical_9}
    \end{eqnarray}
     i.e., $\hat \sigma_l^2$ is finite due to finiteness of $\|\theta_{l+1}^u\|_2$ (see Lemma~\ref{lem:main_stoch_lemma_clipped_SSTM}). In other words, sequence $\left\{4\alpha_{l+1}^2\left(\|\theta_{l+1}^u\|_2^2 - \EE_{\Bxi^l}\left[\|\theta_{l+1}^u\|_2^2\right]\right)\right\}_{l\ge 0}$ is bounded martingale difference sequence with bounded conditional variances $\{\hat \sigma_l^2\}_{l\ge 0}$. Therefore, we can apply Bernstein's inequality, i.e.\ we apply Lemma~\ref{lem:Bernstein_ineq} with $X_l = \hat X_l = 4\alpha_{l+1}^2\left(\|\theta_{l+1}^u\|_2^2 - \EE_{\Bxi^l}\left[\|\theta_{l+1}^u\|_2^2\right]\right)$, $c = c_1 = \frac{C^2R_0^2}{16\ln\frac{4N}{\beta}}$ and $F = F_1 = \frac{c_1^2\ln\frac{4N}{\beta}}{18}$ and get that for all $b > 0$
    \begin{equation*}
        \PP\left\{\left|\sum\limits_{l=0}^{T-1}\hat X_l\right| > b\text{ and } \sum\limits_{l=0}^{T-1}\hat \sigma_l^2 \le F_1\right\} \le 2\exp\left(-\frac{b^2}{2F_1 + \nicefrac{2c_1b}{3}}\right)
    \end{equation*}
    or, equivalently, with probability at least $1 - 2\exp\left(-\frac{b^2}{2F_1 + \nicefrac{2c_1b}{3}}\right)$
    \begin{equation*}
        \text{either } \sum\limits_{l=0}^{T-1}\hat\sigma_l^2 > F_1 \quad \text{or} \quad \underbrace{\left|\sum\limits_{l=0}^{T-1}\hat X_l\right|}_{|\circledThree|} \le b.
    \end{equation*}
    As in our derivations of the upper bound for $\circledOne$ we choose such $b$ that $2\exp\left(-\frac{b^2}{2F_1 + \nicefrac{2c_1b}{3}}\right) = \frac{\beta}{2N}$, i.e.,
    \begin{eqnarray*}
         b &=& \frac{c_1\ln\frac{4N}{\beta}}{3} + \sqrt{\frac{c_1^2\ln^2\frac{4N}{\beta}}{9} + 2F_1\ln\frac{4N}{\beta}}\revision{=}\frac{1+\sqrt{2}}{3}c_1\ln\frac{4N}{\beta} \le  \frac{C^2R_0^2}{16}.
    \end{eqnarray*}
    That is, with probability at least $1 - \frac{\beta}{2N}$
     \begin{equation*}
        \underbrace{\text{either } \sum\limits_{l=0}^{T-1}\hat\sigma_l^2 > F_1 \quad \text{or} \quad \left|\circledThree\right| \le \frac{C^2R_0^2}{16}}_{\text{probability event } E_{\circledThree}}.
    \end{equation*}
    Next, we notice that probability event $E_{T-1}$ implies that
    \begin{eqnarray*}
        \sum\limits_{l=0}^{T-1}\hat\sigma_l^2 &\overset{\eqref{eq:main_thm_clipped_SSTM_technical_9}}{\le}& 8c_1\sum\limits_{l=0}^{T-1}\alpha_{l+1}^2\EE_{\Bxi^l}\left[\left\|\theta_{l+1}^u\right\|_2^2\right]\\
        &\overset{\eqref{eq:variance_bound_clipped_SSTM},\eqref{eq:main_thm_clipped_SSTM_technical_4}}{\le}& \frac{9\sigma^2C^2R_0^2}{\ln\frac{4N}{\beta}}\sum\limits_{l=0}^{T-1}\frac{\alpha_{l+1}^2}{m_l} \overset{\eqref{eq:bathces_clipped_SSTM}}{\le} \frac{C^4R_0^4}{2304\ln^2\frac{4N}{\beta}}\sum\limits_{l=0}^{T-1}\frac{1}{N}\\
        &\overset{T\le N}{\le}& \frac{C^4R_0^4}{2304\ln^2\frac{4N}{\beta}} \overset{\eqref{eq:beta_N_condition_clipped_SSTM}}{\le} \frac{C^4R_0^4}{4608\ln\frac{4N}{\beta}} = \frac{c_1^2\ln\frac{4N}{\beta}}{18} = F_1.
    \end{eqnarray*}
    
    \textbf{Upper bound for $\circledFour$.} The probability event $E_{T-1}$ implies
    \begin{eqnarray*}
        \circledFour &=& \sum\limits_{l=0}^{T-1}4\alpha_{l+1}^2\EE_{\Bxi^l}\left[\|\theta_{l+1}^u\|_2^2\right] \overset{\eqref{eq:variance_bound_clipped_SSTM},\eqref{eq:main_thm_clipped_SSTM_technical_4}}{\le} \sum\limits_{l=0}^{T-1}\frac{72\alpha_{l+1}^2\sigma^2}{m_l}\overset{\eqref{eq:bathces_clipped_SSTM}}{\le} \sum\limits_{l=0}^{T-1}\frac{C^2R_0^2}{288N\ln\frac{4N}{\beta}}\\
        &\overset{T\le N}{\le}& \frac{C^2R_0^2}{288\ln\frac{4N}{\beta}} \overset{\eqref{eq:beta_N_condition_clipped_SSTM}}{\le} \frac{C^2R_0^2}{576}.
    \end{eqnarray*}
    
    \textbf{Upper bound for $\circledFive$.} Again, we use corollaries of probability event $E_{T-1}$:
    \begin{eqnarray*}
        \circledFive &=& \sum\limits_{l=0}^{T-1}4\alpha_{l+1}^2\|\theta_{l+1}^b\|_2^2 \overset{\eqref{eq:bias_bound_clipped_SSTM},\eqref{eq:main_thm_clipped_SSTM_technical_4}}{\le} \sum\limits_{l=0}^{T-1}\frac{64\alpha_{l+1}^2\sigma^4}{m_l^2\lambda_{l+1}^2} = \frac{64\sigma^4}{B^2}\sum\limits_{l=0}^{T-1}\frac{\alpha_{l+1}^4}{m_l^2}\\
        &\overset{\eqref{eq:bathces_clipped_SSTM},\eqref{eq:B_a_parameters_clipped_SSTM}}{\le}& \frac{256\cdot 64\sigma^4\ln^2\frac{4N}{\beta}}{C^2R_0^2}\sum\limits_{l=0}^{T-1}\frac{C^4R_0^4}{20736^2N^2\sigma^4\ln^2\frac{4N}{\beta}} \overset{T\le N}{\le} \frac{C^2R_0^2}{26244}.
    \end{eqnarray*}
    Now we summarize all bounds that we have: probability event $E_{T-1}$ implies
    \revision{\begin{gather*}
        R_T^2 
        % &\overset{\eqref{eq:main_thm_clipped_SSTM_technical_1}}{\le}& R_0^2 + 2\sum\limits_{l=0}^{T-1}\alpha_{l+1}\left\la\theta_{l+1}, x^* - z^l\right\ra\\
        % &&\quad + 2\sum\limits_{l=0}^{k-1}\alpha_{l+1}^2\left\la\theta_{l+1}, \nabla f(x^{l+1})\right\ra + 2\sum\limits_{l=0}^{T-1}\alpha_{l+1}^2\|\theta_{l+1}\|_2^2 + \frac{A_N\varepsilon}{2}\\
        \overset{\eqref{eq:main_thm_clipped_SSTM_technical_5}}{\le} R_0^2 + \circledOne + \circledTwo + \circledThree + \circledFour + \circledFive + \frac{A_N\varepsilon}{2},\\
        \revision{\text{where\quad}} \circledTwo \le \frac{11C^2R_0^2}{1728},\quad \circledFour \le \frac{CR_0^2}{576},\quad \circledFive \le \frac{C^2R_0^2}{26244},\\
        \sum\limits_{l=0}^{T-1}\sigma_l^2 \le F,\quad \sum\limits_{l=0}^{T-1}\hat\sigma_l^2 \le F_1
    \end{gather*}}
    and 
    \begin{equation*}
        \PP\{E_{T-1}\} \ge 1 - \frac{(T-1)\beta}{N},\quad \PP\{E_\circledOne\} \ge 1 - \frac{\beta}{2N},\quad \PP\{E_\circledThree\} \ge 1 - \frac{\beta}{2N},
    \end{equation*}
    where
    \begin{eqnarray*}
        E_{\circledOne} &=& \left\{\text{either } \sum\limits_{l=0}^{T-1}\sigma_l^2 > F \quad \text{or} \quad \left|\circledOne\right| \le \left(\frac{1}{4} + \frac{1}{256}\right)C^2R_0^2\right\},\\
        E_{\circledThree} &=& \left\{\text{either } \sum\limits_{l=0}^{T-1}\hat\sigma_l^2 > F_1 \quad \text{or} \quad \left|\circledThree\right| \le \frac{C^2R_0^2}{16}\right\}.
    \end{eqnarray*}
    Moreover, since $N \revision{\overset{\eqref{eq:epsilon_clipped_SSTM_3}}{\le}}\frac{2^{\frac{1+\nu}{1+3\nu}}a^{\frac{1+\nu}{1+3\nu}}C^{\frac{2(1+\nu)}{1+3\nu}}R_0^{\frac{2(1+\nu)}{1+3\nu}}M_\nu^{\frac{2}{1+3\nu}}}{\varepsilon^{\frac{2}{1+3\nu}}} + 1$ and \\$\varepsilon \overset{\eqref{eq:epsilon_clipped_SSTM}}{\le} \frac{2^{\frac{1+\nu}{2}}a^{\frac{1+\nu}{2}}C^{1+\nu}R_0^{1+\nu}M_\nu}{100^{\frac{1+3\nu}{2}}}$ we have
    \begin{eqnarray*}
          \frac{A_N\varepsilon}{2} &\overset{\eqref{eq:A_k_upper_bound}}{\le}& \frac{N^{\frac{1+3\nu}{1+\nu}}\varepsilon^{\frac{2}{1+\nu}}}{4aM_\nu^{\frac{2}{1+\nu}}} \revision{\overset{\eqref{eq:epsilon_clipped_SSTM_3}}{\le}} \left(\frac{2^{\frac{1+\nu}{1+3\nu}}a^{\frac{1+\nu}{1+3\nu}}C^{\frac{2(1+\nu)}{1+3\nu}}R_0^{\frac{2(1+\nu)}{1+3\nu}}M_\nu^{\frac{2}{1+3\nu}}}{\varepsilon^{\frac{2}{1+3\nu}}} + 1\right)^{\frac{1+3\nu}{1+\nu}}\frac{\varepsilon^{\frac{2}{1+\nu}}}{4aM_\nu^{\frac{2}{1+\nu}}}\\
          &\overset{\eqref{eq:epsilon_clipped_SSTM}}{\le}& \left(\frac{101}{100}\right)^{\frac{1+3\nu}{1+\nu}} \frac{C^2R_0^2}{2} \le \frac{10201C^2R_0^2}{20000}.
    \end{eqnarray*}
     Taking into account these inequalities we get that probability event $E_{T-1}\cap E_\circledOne \cap E_\circledThree$ implies
   \revision{\begin{eqnarray}
         R_T^2 
         % &\overset{\eqref{eq:main_thm_clipped_SSTM_technical_1}}{\le}& R_0^2 + 2\sum\limits_{l=0}^{T-1}\alpha_{l+1}\left\la\theta_{l+1}, x^* - z^l\right\ra + 2\sum\limits_{l=0}^{k-1}\alpha_{l+1}^2\left\la\theta_{l+1}, \nabla f(x^{l+1})\right\ra\notag\\
         % &&\quad + 2\sum\limits_{l=0}^{T-1}\alpha_{l+1}^2\|\theta_{l+1}\|_2^2+ \frac{A_N\varepsilon}{2}\notag\\
         &\le& \left(1 + \left(\frac{1}{4} + \frac{1}{256} + \frac{11}{1728} + \frac{1}{16} + \frac{1}{576} + \frac{1}{26244} + \frac{10201}{20000}\right)C^2\right)R_0^2\notag\\
         &\overset{\eqref{eq:C_definition_clipped_SSTM}}{\le}& C^2R_0^2.\label{eq:main_thm_clipped_SSTM_technical_10}
    \end{eqnarray}}
    Moreover, \revision{using the bound for the union}, we
derive
    \begin{equation}
        \PP\left\{E_{T-1}\cap E_\circledOne \cap E_\circledThree\right\} = 1 - \PP\left\{\overline{E}_{T-1}\cup\overline{E}_\circledOne\cup\overline{E}_\circledThree\right\} \ge 1 - \frac{T\beta}{N}.\label{eq:main_thm_clipped_SSTM_technical_11}
    \end{equation}
    That is, by definition of $E_T$ and $E_{T-1}$ we have \revision{proven} that
    \begin{eqnarray*}
        \PP\{E_T\} &\overset{\eqref{eq:main_thm_clipped_SSTM_technical_10}}{\ge}& \PP\left\{E_{T-1}\cap E_\circledOne \cap E_\circledThree\right\} \overset{\eqref{eq:main_thm_clipped_SSTM_technical_11}}{\ge} 1 - \frac{T\beta}{N},
    \end{eqnarray*}
    which implies that for all $k = 0,1,\ldots, N$ we have $\PP\{E_k\} \ge 1 - \frac{k\beta}{N}$. Then, for $k = N$ we have that with probability at least $1-\beta$
    \begin{eqnarray*}
        &A_N&\left(f(y^N) - f(x^*)\right) \\
        &\overset{\eqref{eq:main_thm_clipped_SSTM_technical_0}}{\le}& \frac{1}{2}\|z^0 - \revision{x^*}\|_2^2 - \frac{1}{2}\|z^{N} - \revision{x^*}\|_2^2 + \sum\limits_{k=0}^{N-1}\alpha_{k+1}\left\la \theta_{k+1}, \revision{x^*} - z^{k}\right\ra\notag\\
        &&\quad + \sum\limits_{k=0}^{N-1}\alpha_{k+1}^2\left\|\theta_{k+1}\right\|_2^2 + \sum\limits_{k=0}^{N-1}\alpha_{k+1}^2\left\la\theta_{k+1},\nabla f(x^{k+1})\right\ra + \frac{A_N\varepsilon}{4}\overset{\eqref{eq:main_thm_clipped_SSTM_technical_2}}{\le} \frac{C^2R_0^2}{2}.
    \end{eqnarray*}
    Since $A_{N} \overset{\eqref{eq:A_k_lower_bound}}{\ge} \frac{N^{\frac{1+3\nu}{1+\nu}}(\nicefrac{\varepsilon}{2})^{\frac{1-\nu}{1+\nu}}}{2^{\frac{1+3\nu}{1+\nu}}aM_\nu^{\frac{2}{1+\nu}}}$ we get that with probability at least $1-\beta$
    \begin{equation*}
        f(y^N) - f(x^*) \le \frac{4aC^2R_0^2M_\nu^{\frac{2}{1+\nu}}}{N^{\frac{1+3\nu}{1+\nu}}\varepsilon^{\frac{1-\nu}{1+\nu}}}.
    \end{equation*}
    In other words, {\tt clipped-SSTM} with $a = 16384\ln^2\frac{4N}{\beta}$ achieves $f(y^N) - f(x^*) \le \varepsilon$ with probability at least $1-\beta$ after $\cO\left(\frac{M_\nu^{\frac{2}{1+3\nu}}R_0^{\frac{2(1+\nu)}{1+3\nu}}}{\varepsilon^{\frac{2}{1+3\nu}}}\ln^{\frac{2(1+\nu)}{1+3\nu}}\frac{M_\nu^{\frac{2}{1+3\nu}}R_0^{\frac{2(1+\nu)}{1+3\nu}}}{\varepsilon^{\frac{2}{1+3\nu}}\beta}\right)$ iterations and requires
    \begin{eqnarray*}
         &&\sum\limits_{k=0}^{N-1}m_k \overset{\eqref{eq:bathces_clipped_SSTM}}{=} \sum\limits_{k=0}^{N-1}\cO\left(\max\left\{1,\frac{\sigma^2\alpha_{k+1}^2N\ln\frac{N}{\beta}}{R_0^2}\right\}\right)\\
         &=& \cO\left(\max\left\{N,\sum\limits_{k=0}^{N-1}\frac{\sigma^2(k+1)^{\frac{4\nu}{1+\nu}}\varepsilon^{\frac{2(1-\nu)}{1+\nu}}N\ln\frac{N}{\beta}}{M_\nu^{\frac{4}{1+\nu}}R_0^2a^2}\right\}\right)\\
         &\overset{\eqref{eq:B_a_parameters_clipped_SSTM}}{=}& \cO\left(\max\left\{N,\frac{\sigma^2\varepsilon^{\frac{2(1-\nu)}{1+\nu}}N^{\frac{2(1+3\nu)}{1+\nu}}}{M_\nu^{\frac{4}{1+\nu}}R_0^2\ln^3\frac{N}{\beta}}\right\}\right)\\
         &=& \cO\left(\max\left\{\frac{M_\nu^{\frac{2}{1+3\nu}}R_0^{\frac{2(1+\nu)}{1+3\nu}}}{\varepsilon^{\frac{2}{1+3\nu}}}\ln^{\frac{2(1+\nu)}{1+3\nu}}\frac{M_\nu^{\frac{2}{1+3\nu}}R_0^{\frac{2(1+\nu)}{1+3\nu}}}{\varepsilon^{\frac{2}{1+3\nu}}\beta},\frac{\sigma^2R_0^2}{\varepsilon^2}\ln\frac{M_\nu^{\frac{2}{1+3\nu}}R_0^{\frac{2(1+\nu)}{1+3\nu}}}{\varepsilon^{\frac{2}{1+3\nu}}\beta}\right\}\right).
    \end{eqnarray*}
    oracle calls\revision{, where in the last equality we substituted the number of iterations $N$ from the statement of the theorem}. \qed
\end{proof}

\subsubsection{On the Batchsizes and Numerical Constants}
The obtained complexity result is discussed in detail in Section~\ref{sec:clipped_SSTM_main}. Here, we discuss the choice of the parameters. For convenience, we provide all assumptions from Theorem~\ref{thm:main_result_clipped_SSTM} on the parameters below:
\begin{equation}
    \ln\frac{4N}{\beta} \ge 2 \label{eq:requirement_1}
\end{equation}
\begin{equation}
    \alpha = \frac{(\nicefrac{\varepsilon}{2})^{\frac{1-\nu}{1+\nu}}}{2^{\frac{2\nu}{1+\nu}}aM_\nu^{\frac{2}{1+\nu}}},\quad m_k = \max\left\{1, \frac{20736N\sigma^2\alpha_{k+1}^2\ln \frac{4N}{\beta}}{C^2R_0^2}\right\},\label{eq:requirement_2}
\end{equation}
\begin{equation}
     B = \frac{CR_0}{16\ln\frac{4N}{\beta}},\quad a \ge 16384\ln^2\frac{4N}{\beta}, \label{eq:requirement_3}
\end{equation}
\begin{equation}
        \varepsilon^{\frac{1-\nu}{1+\nu}} \le \frac{aCM_\nu^{\frac{1-\nu}{1+\nu}}R_0^{1-\nu}}{16\ln\frac{4N}{\beta}},\quad \varepsilon \le \frac{2^{\frac{1+\nu}{2}}a^{\frac{1+\nu}{2}}C^{1+\nu}R_0^{1+\nu}M_\nu}{100^{\frac{1+3\nu}{2}}}, \label{eq:requirement_4}
    \end{equation}
    \begin{eqnarray}
         \varepsilon^{\frac{1-\nu}{1+3\nu}} &\le& \min\Bigg\{\frac{a^{\frac{2+3\nu-\nu^2}{2(1+3\nu)}}}{2^{2+4\nu+\frac{3+8\nu-5\nu^2-6\nu^3}{(1+\nu)(1+3\nu)}}\ln\frac{4N}{\beta}},\notag\\
         &&\quad\quad\quad\quad\frac{a^{\frac{(1+\nu)^2}{1+3\nu}}}{2^{4+7\nu+\frac{2+7\nu+2\nu^2-3\nu^3}{(1+\nu)(1+3\nu)}}\ln^{1+\nu}\frac{4N}{\beta}}\Bigg\}C^{\frac{1-\nu^2}{1+3\nu}}R_0^{\frac{1-\nu^2}{1+3\nu}}M_\nu^{\frac{1-\nu}{1+3\nu}},\label{eq:requirement_4_1}
    \end{eqnarray}
\begin{equation}
    N = \left\lceil\frac{2^{\frac{1+\nu}{1+3\nu}}a^{\frac{1+\nu}{1+3\nu}}C^{\frac{2(1+\nu)}{1+3\nu}}R_0^{\frac{2(1+\nu)}{1+3\nu}}M_\nu^{\frac{2}{1+3\nu}}}{\varepsilon^{\frac{2}{1+3\nu}}}\right\rceil + 1,\quad C = \sqrt{7}. \label{eq:requirement_5}
\end{equation}
We emphasize that \eqref{eq:requirement_1}, \eqref{eq:requirement_4}, and \eqref{eq:requirement_4_1} are not restrictive at all since the target accuracy $\varepsilon$ and confidence level $\beta$ are often chosen to be small enough, whereas $a$ can be made large enough. 

\revision{One} can notice that the assumptions on parameter $a$ and batch size $m_k$ contain huge numerical constants (see \eqref{eq:requirement_2}-\eqref{eq:requirement_3}) that result in large numerical constants in the expression for the number of iterations $N$ and the total number of oracle calls required to guarantee accuracy $\varepsilon$ of the solution. However, for the sake of simplicity of the proofs, we do not try to provide an analysis with better dependence on the numerical constants. Moreover, the main goal of this paper is to derive improved high-probability complexity guarantees in terms of $\cO(\cdot)$-notation -- such guarantees are insensitive to numerical constants by definition.

Finally, \eqref{eq:requirement_2} implies that the batch size at iteration $k$ is
\begin{eqnarray*}
    m_k &=& \Theta\left(\max\left\{1, \frac{N\sigma^2(k+1)^{\frac{4\nu}{1+\nu}}\varepsilon^{\frac{2(1-\nu)}{1+\nu}}\ln\frac{N}{\beta}}{a^2 M_\nu^{\frac{4}{1+\nu}}R_0^2}\right\}\right)
\end{eqnarray*}
meaning that for $k \sim N$ and $a = \cO\left(\ln^2\frac{N}{\beta}\right)$ we have that the second term in the maximum is proportional to $N^{\frac{1+5\nu}{1+\nu}}\varepsilon^{\frac{2(1-\nu)}{1+\nu}}$. When $\nu$ is close to $1$ and $\sigma^2 \gg 0$, it implies that $m_k$ is huge for big enough $k$, making the method completely impractical. Fortunately, this issue can be easily solved without sacrificing the oracle complexity of the method: it is sufficient to choose large enough $a$.

\begin{corollary}\label{cor:SSTM_cvx_small_batch}
    Let the assumptions of Theorem~\ref{thm:main_result_clipped_SSTM} hold and
    \begin{equation}
        a = \max\left\{16384\ln^2\frac{4N}{\beta}, \frac{5184^{\frac{1+3\nu}{1+\nu}}\cdot 2^{\frac{2(1+5\nu)(1+2\nu)}{(1+\nu)^2}}\sigma^{\frac{2(1+3\nu)}{1+\nu}}C^{\frac{4\nu}{1+\nu}}R_0^{\frac{4\nu}{1+\nu}}\ln^{\frac{1+3\nu}{1+\nu}}\frac{4N}{\beta}}{M_\nu^{\frac{2}{1+\nu}}\varepsilon^{\frac{6\nu}{1+\nu}}}\right\}. \label{eq:refined_assumption_on_a_SSTM}
    \end{equation}
    Then for all $k=0,1,\ldots,N-1$ we have $m_k = 1$ and to achieve $f(y^N) - f(x^*) \le \varepsilon$ with probability at least $1-\beta$ \algname{clipped-SSTM} requires
    \begin{equation}
        \cO\left(\max\left\{\frac{M_\nu^{\frac{2}{1+3\nu}}R_0^{\frac{2(1+\nu)}{1+3\nu}}}{\varepsilon^{\frac{2}{1+3\nu}}}\ln^{\frac{2(1+\nu)}{1+3\nu}}\frac{M_\nu^{\frac{2}{1+3\nu}}R_0^{\frac{2(1+\nu)}{1+3\nu}}}{\varepsilon^{\frac{2}{1+3\nu}}\beta}, \frac{\sigma^2R_0^2}{\varepsilon^2}\ln\frac{\sigma^2R_0^2}{\varepsilon^2\beta}\right\}\right) \label{eq:SSTM_complexity_small_batch_cvx}
    \end{equation}
    iterations/oracle calls.
\end{corollary}
\begin{proof}
    We start with showing that for the new choice of $a$ we have $m_k = 1$ for all $k=0,1,\ldots,N-1$. Indeed, using the assumptions on the parameters from Theorem~\ref{thm:main_result_clipped_SSTM}, we derive
    \begin{eqnarray*}
        m_k &=& \max\left\{1, \frac{20736N\sigma^2\alpha_{k+1}^2\ln \frac{4N}{\beta}}{C^2R_0^2}\right\} \\
        &=& \max\left\{1, \frac{5184N\sigma^2 (k+1)^{\frac{4\nu}{1+\nu}}\varepsilon^{\frac{2(1-\nu)}{1+\nu}}}{a^2M_\nu^{\frac{4}{1+\nu}}C^2R_0^2}\right\}\\
        &\overset{k < N}{\le}& \max\left\{1, \frac{5184\sigma^2 N^{\frac{1+5\nu}{1+\nu}}\varepsilon^{\frac{2(1-\nu)}{1+\nu}}}{a^2M_\nu^{\frac{4}{1+\nu}}C^2R_0^2}\right\}\\
        &\overset{\eqref{eq:C_definition_clipped_SSTM}}{\le}& \max\left\{1, \frac{\revision{5184\cdot 2^{\frac{2(1+5\nu)(1+2\nu)}{(1+\nu)(1+3\nu)}}\sigma^2C^{\frac{4\nu}{1+3\nu}}R_0^{\frac{4\nu}{1+3\nu}}}}{a^{\frac{1+\nu}{1+3\nu}}M_\nu^{\frac{2}{1+3\nu}}\varepsilon^{\frac{6\nu}{1+3\nu}}}\right\} \overset{\eqref{eq:refined_assumption_on_a_SSTM}}{\le} 1.
    \end{eqnarray*}
    That is, with the choice of the stepsize parameter $a$ as in \eqref{eq:refined_assumption_on_a_SSTM}, the method uses unit batch sizes at each iteration. Therefore, iteration and oracle complexities coincide in this case. Next, we consider two possible situations.
    \begin{enumerate}
        \item If $a = 16384\ln^2\frac{4N}{\beta}$, then 
        \begin{eqnarray*}
             N &\overset{\eqref{eq:C_definition_clipped_SSTM}}{=}& \left\lceil\frac{2^{\frac{1+\nu}{1+3\nu}}a^{\frac{1+\nu}{1+3\nu}}C^{\frac{2(1+\nu)}{1+3\nu}}R_0^{\frac{2(1+\nu)}{1+3\nu}}M_\nu^{\frac{2}{1+3\nu}}}{\varepsilon^{\frac{2}{1+3\nu}}}\right\rceil + 1 \\
             &=&  \cO\left(\frac{M_\nu^{\frac{2}{1+3\nu}}R_0^{\frac{2(1+\nu)}{1+3\nu}}}{\varepsilon^{\frac{2}{1+3\nu}}}\ln^{\frac{2(1+\nu)}{1+3\nu}}\frac{N}{\beta}\right)\\
             &=& \cO\left(\frac{M_\nu^{\frac{2}{1+3\nu}}R_0^{\frac{2(1+\nu)}{1+3\nu}}}{\varepsilon^{\frac{2}{1+3\nu}}}\ln^{\frac{2(1+\nu)}{1+3\nu}}\frac{M_\nu^{\frac{2}{1+3\nu}}R_0^{\frac{2(1+\nu)}{1+3\nu}}}{\varepsilon^{\frac{2}{1+3\nu}}\beta}\right).
        \end{eqnarray*}
        \item If $a = \frac{5184^{\frac{1+3\nu}{1+\nu}}\cdot 2^{\frac{2(1+5\nu)(1+2\nu)}{(1+\nu)^2}}\sigma^{\frac{2(1+3\nu)}{1+\nu}}C^{\frac{4\nu}{1+\nu}}R_0^{\frac{4\nu}{1+\nu}}\ln^{\frac{1+3\nu}{1+\nu}}\frac{4N}{\beta}}{M_\nu^{\frac{2}{1+\nu}}\varepsilon^{\frac{6\nu}{1+\nu}}}$, then 
        \begin{eqnarray*}
             N &\overset{\eqref{eq:C_definition_clipped_SSTM}}{=}& \left\lceil\frac{2^{\frac{1+\nu}{1+3\nu}}a^{\frac{1+\nu}{1+3\nu}}C^{\frac{2(1+\nu)}{1+3\nu}}R_0^{\frac{2(1+\nu)}{1+3\nu}}M_\nu^{\frac{2}{1+3\nu}}}{\varepsilon^{\frac{2}{1+3\nu}}}\right\rceil + 1\\
             &=& \cO\left(\frac{M_\nu^{\frac{2}{1+3\nu}}R_0^{\frac{2(1+\nu)}{1+3\nu}}}{\varepsilon^{\frac{2}{1+3\nu}}}\cdot \frac{\sigma^2R_0^{\frac{4\nu}{1+3\nu}}\ln\frac{4N}{\beta}}{M_\nu^{\frac{2}{1+3\nu}}\varepsilon^{\frac{6\nu}{1+3\nu}}}\right) = \cO\left(\frac{\sigma^2R_0^2}{\varepsilon^2}\ln\frac{\sigma^2R_0^2}{\varepsilon^2\beta}\right).
        \end{eqnarray*}
    \end{enumerate}
    Putting all together, we derive \eqref{eq:SSTM_complexity_small_batch_cvx}. \qed
\end{proof}

\subsection{Convergence in the Strongly Convex Case}\label{sec:clipped_SSTM_str_cvx_appendix}
In this section, we provide the full proof of Theorem~\ref{thm:main_result_clipped_SSTM_str_cvx_main} together with a complete statement of the result.  Note that due to strong convexity, the solution $x^*$ is unique. 
\begin{theorem}\label{thm:main_result_clipped_SSTM_str_cvx}
    Assume that function $f$ is $\mu$-strongly convex,  its stochastic gradient and its gradient satisfy \eqref{eq:bounded_variance_clipped_SSTM} and \eqref{eq:holder_def} respectively with $\sigma > 0$, $\nu \in [0,1]$, $M_\nu > 0$ on $Q = B_{3R_0}(x^*)$, where $R_0 \ge \|x^0 - x^*\|_2$. Let $\varepsilon > 0$, $\beta \in (0,1)$ and for $t = 1,\ldots, \tau$
    \begin{equation}
        \revision{N_t = \left\lceil\frac{a_t^{\frac{1+\nu}{1+3\nu}}C^{\frac{2(1+\nu)}{1+3\nu}}R_0^{\frac{2(1+\nu)}{1+3\nu}}M_\nu^{\frac{2}{1+3\nu}}}{2^{\frac{(1+\nu)(t-2)}{1+3\nu}}\varepsilon_t^{\frac{2}{1+3\nu}}}\right\rceil + 1},\quad \varepsilon_t = \frac{\mu R_0^2}{2^{t+1}},\label{eq:N_t_eps_t_clipped_SSTM_str_cvx}
    \end{equation}
    \begin{equation}
        \tau = \left\lceil\log_2\frac{\mu R_0}{2\varepsilon}\right\rceil \revision{-1},\quad \ln\frac{4N_t \tau}{\beta} \ge 2,\quad C = \sqrt{7}, \label{eq:beta_N_condition_clipped_SSTM_str_cvx}
    \end{equation}
    \begin{equation}
        \revision{\alpha^t = \frac{\varepsilon_t^{\frac{1-\nu}{1+\nu}}}{2a_tM_\nu^{\frac{2}{1+\nu}}}},\quad m_k^t = \max\left\{1, \frac{20736\cdot 2^{t-1} N_t\sigma^2(\alpha_{k+1}^t)^2\ln \frac{4N_t\tau}{\beta}}{C^2R_0^2}\right\},\label{eq:bathces_clipped_SSTM_str_cvx}
    \end{equation}
    \begin{equation}
        \alpha_{k+1}^t = \alpha^t(k+1)^{\frac{2\nu}{1+\nu}},\quad \revision{B_t} = \frac{CR_0}{16\ln\frac{4N_t\tau}{\beta}},\quad a_t = 16384\ln^2\frac{4N_t\tau}{\beta}, \label{eq:B_a_parameters_clipped_SSTM_str_cvx}
    \end{equation}
   \begin{equation}
        \varepsilon_t^{\frac{1-\nu}{1+\nu}} \le \frac{a_tCM_\nu^{\frac{1-\nu}{1+\nu}}R_0^{1-\nu}}{16\cdot 2^{\frac{(1-\nu)(t-1)}{2}}\ln\frac{4N_t\tau}{\beta}},\quad \revision{\varepsilon_t \le \frac{a_t^{\frac{1+\nu}{2}}C^{1+\nu}R_0^{1+\nu}M_\nu}{100^{\frac{1+3\nu}{2}}\cdot 2^{\frac{(1+\nu)(t-2)}{2}}}}, \label{eq:epsilon_SSTM_str_cvx_1}
    \end{equation}
    \begin{eqnarray}
         \varepsilon_t^{\frac{1-\nu}{1+3\nu}} &\le& \min\Bigg\{\frac{a_t^{\frac{2+3\nu-\nu^2}{2(1+3\nu)}}}{2^{2+4\nu+\frac{3+8\nu-5\nu^2-6\nu^3}{(1+\nu)(1+3\nu)}}\ln\frac{4N_t\tau}{\beta}},\notag\\
         &&\quad\quad\quad\frac{a_t^{\frac{(1+\nu)^2}{1+3\nu}}}{2^{4+7\nu+\frac{2+7\nu+2\nu^2-3\nu^3}{(1+\nu)(1+3\nu)}}\ln^{1+\nu}\frac{4N_t\tau}{\beta}}\Bigg\}\frac{C^{\frac{1-\nu^2}{1+3\nu}}R_0^{\frac{1-\nu^2}{1+3\nu}}M_\nu^{\frac{1-\nu}{1+3\nu}}}{2^{\frac{(1-\nu^2)(t-1)}{2(1+3\nu)}}}.\label{eq:epsilon_SSTM_str_cvx_2}
    \end{eqnarray}
    Then, after $\tau$ restarts \algname{R-clipped-SSTM} produces $\hat x^\tau$ such that with probability at least $1-\beta$
    \begin{equation}
        f(\hat x^\tau) - f(x^*) \le \varepsilon. \label{eq:main_result_clipped_SSTM_str_cvx}
    \end{equation}
    That is, to achieve \eqref{eq:main_result_clipped_SSTM_str_cvx} with probability at least $1-\beta$ the method requires
    \begin{equation}
        \hat{N} = \cO\left(\max\left\{\left(\frac{M_\nu}{\mu R_0^{1-\nu}}\right)^{\frac{2}{1+3\nu}}\ln\frac{\mu R_0^2}{\varepsilon}, \left(\frac{M_\nu^2}{\mu^{1+\nu}\varepsilon^{1-\nu}}\right)^{\frac{1}{1+3\nu}}\right\}\ln^{\frac{2(1+\nu)}{1+3\nu}}\frac{M_\nu^{\frac{2}{1+3\nu}}\ln\frac{\mu R_0^2}{\varepsilon}}{\mu^{\frac{1+\nu}{1+3\nu}}\varepsilon^{\frac{1-\nu}{1+3\nu}}\beta}\right) \label{eq:clipped_SSTM_iter_complexity_str_cvx}
    \end{equation}
    iterations of Algorithm \ref{alg:clipped-SSTM} and
    \begin{equation}
        \cO\left(\max\left\{\hat{N}, \frac{\sigma^2}{\mu \varepsilon}\ln \frac{M_\nu^{\frac{2}{1+3\nu}}\ln\frac{\mu R_0^2}{\varepsilon}}{\mu^{\frac{1+\nu}{1+3\nu}}\varepsilon^{\frac{1-\nu}{1+3\nu}}\beta}\right\}\right)\text{ oracle calls.} \label{eq:clipped_SSTM_oracle_complexity_str_cvx}
    \end{equation}
\end{theorem}
\begin{proof}
    Applying \revision{the convergence rate result \eqref{eq:main_result_clipped_SSTM} in Theorem~\ref{thm:main_result_clipped_SSTM} together with our choice of the parameters of the first restart}, we obtain that with probability at least $1-\frac{\beta}{\tau}$ it holds that 
    $
        f(\hat{x}^1) - f(x^*) \le \frac{\mu R_0^2}{4}.
    $
    Since $f$ is $\mu$-strongly convex we have
        $\frac{\mu \|\hat{x}^1 - x^*\|_2^2}{2} \le f(\hat{x}^1) - f(x^*)$.
    Therefore, with probability at least $1-\frac{\beta}{\tau}$
    \begin{equation*}
        f(\hat{x}^1) - f(x^*) \le \frac{\mu R_0^2}{4},\quad \|\hat{x}^1 - x^*\|_2^2 \le \frac{R_0^2}{2}.
    \end{equation*}
    From mathematical induction and the union bound for probability events, it follows that \revision{the} inequalities
    \begin{equation*}
        f(\hat{x}^t) - f(x^*) \le \frac{\mu R_0^2}{2^{t+1}},\quad \|\hat{x}^t - x^*\|_2^2 \le \frac{R_0^2}{2^t}
    \end{equation*}
    hold simultaneously for $t = 1,\ldots, \tau$ with probability at least $1 - \beta$. Thus, it means that after $\tau = \left\lceil\log_2\frac{\mu R_0^2}{\varepsilon} \right\rceil - 1$ restarts \algname{R-clipped-SSTM} finds an $\varepsilon$-solution with probability at least $1-\beta$. The total number of iterations $\hat N$ is
    \begin{eqnarray*}
        &&\hspace{-1em}\sum\limits_{t=1}^\tau N_t\\
        &\revision{\overset{\eqref{eq:N_t_eps_t_clipped_SSTM_str_cvx}, \eqref{eq:B_a_parameters_clipped_SSTM_str_cvx}}{=}}& \cO\left(\sum\limits_{t=1}^\tau\frac{M_\nu^{\frac{2}{1+3\nu}}R_0^{\frac{2(1+\nu)}{1+3\nu}}}{2^{\frac{(1+\nu)t}{1+3\nu}}\varepsilon_t^{\frac{2}{1+3\nu}}}\ln^{\frac{2(1+\nu)}{1+3\nu}}\frac{M_\nu^{\frac{2}{1+3\nu}}R_0^{\frac{2(1+\nu)}{1+3\nu}}\tau}{2^{\frac{(1+\nu)t}{1+3\nu}}\varepsilon_t^{\frac{2}{1+3\nu}}\beta}\right)\\
        &=& \cO\left(\sum\limits_{t=1}^\tau\frac{M_\nu^{\frac{2}{1+3\nu}}R_0^{\frac{2(1+\nu)}{1+3\nu}}2^{\frac{2t}{1+3\nu}}}{2^{\frac{(1+\nu)t}{1+3\nu}}\mu^{\frac{2}{1+3\nu}}R_0^{\frac{4}{1+3\nu}}}\ln^{\frac{2(1+\nu)}{1+3\nu}}\frac{M_\nu^{\frac{2}{1+3\nu}}R_0^{\frac{2(1+\nu)}{1+3\nu}}2^{\frac{2t}{1+3\nu}}\tau}{2^{\frac{(1+\nu)t}{1+3\nu}}\mu^{\frac{2}{1+3\nu}}R_0^{\frac{4}{1+3\nu}}\beta}\right)\\
        &=& \cO\left(\sum\limits_{t=1}^\tau\frac{M_\nu^{\frac{2}{1+3\nu}}2^{\frac{(1-\nu)t}{1+3\nu}}}{\mu^{\frac{2}{1+3\nu}}R_0^{\frac{2(1-\nu)}{1+3\nu}}}\ln^{\frac{2(1+\nu)}{1+3\nu}}\frac{M_\nu^{\frac{2}{1+3\nu}}2^{\frac{(1-\nu)t}{1+3\nu}}\tau}{\mu^{\frac{2}{1+3\nu}}R_0^{\frac{2(1-\nu)}{1+3\nu}}\beta}\right)\\
        &=& \cO\left(\frac{M_\nu^{\frac{2}{1+3\nu}}\max\left\{\tau,2^{\frac{(1-\nu)\tau}{1+3\nu}}\right\}}{\mu^{\frac{2}{1+3\nu}}R_0^{\frac{2(1-\nu)}{1+3\nu}}}\ln^{\frac{2(1+\nu)}{1+3\nu}}\frac{M_\nu^{\frac{2}{1+3\nu}}2^{\frac{(1-\nu)\tau}{1+3\nu}}\tau}{\mu^{\frac{2}{1+3\nu}}R_0^{\frac{2(1-\nu)}{1+3\nu}}\beta}\right)\\
        &=& \cO\left(\max\left\{\left(\frac{M_\nu}{\mu R_0^{1-\nu}}\right)^{\frac{2}{1+3\nu}}\ln\frac{\mu R_0^2}{\varepsilon}, \left(\frac{M_\nu^2}{\mu^{1+\nu}\varepsilon^{1-\nu}}\right)^{\frac{1}{1+3\nu}}\right\}\ln^{\frac{2(1+\nu)}{1+3\nu}}\frac{M_\nu^{\frac{2}{1+3\nu}}\ln\frac{\mu R_0^2}{\varepsilon}}{\mu^{\frac{1+\nu}{1+3\nu}}\varepsilon^{\frac{1-\nu}{1+3\nu}}\beta}\right),
    \end{eqnarray*}
    and the total number of oracle calls equals
    \begin{eqnarray*}
         \sum\limits_{t=1}^\tau\sum\limits_{k=0}^{N_t-1}m_k^t &=& \cO\left(\max\left\{\sum\limits_{t=1}^\tau N_t, \sum\limits_{t=1}^\tau \frac{\sigma^2 R_0^2}{2^t \varepsilon_t^2}\ln \frac{M_\nu^{\frac{2}{1+3\nu}}2^{\frac{(1-\nu)t}{1+3\nu}}\tau}{\mu^{\frac{2}{1+3\nu}}R_0^{\frac{2(1-\nu)}{1+3\nu}}\beta}\right\}\right)\\
         &=& \cO\left(\max\left\{\hat{N}, \sum\limits_{t=1}^\tau \frac{\sigma^2 \cdot 2^{t}}{\mu^2 R_0^2}\ln \frac{M_\nu^{\frac{2}{1+3\nu}}2^{\frac{(1-\nu)\tau}{1+3\nu}}\tau}{\mu^{\frac{2}{1+3\nu}}R_0^{\frac{2(1-\nu)}{1+3\nu}}\beta}\right\}\right)\\
         &=& \cO\left(\max\left\{\hat{N}, \frac{\sigma^2}{\mu \varepsilon}\ln \frac{M_\nu^{\frac{2}{1+3\nu}}\ln\frac{\mu R_0^2}{\varepsilon}}{\mu^{\frac{1+\nu}{1+3\nu}}\varepsilon^{\frac{1-\nu}{1+3\nu}}\beta}\right\}\right). \qquad \qquad \qquad \qquad \qed
    \end{eqnarray*} 
\end{proof}
One can also derive a similar result for \algname{R-clipped-SSTM} when stepsize parameter $a$ is chosen as in Corollary~\ref{cor:SSTM_cvx_small_batch} for all restarts. In this case, on can choose unit batch sizes: $m_k^t = 1$ for all $k$ and $t$.

\section{SGD with Clipping: Missing Details and Proofs}\label{sec:clipped_SGD_appendix}

\subsection{Convex Case}\label{sec:clipped_SGD_cvx_appendix}
In this section, we provide a full statement of Theorem~\ref{thm:main_result_clipped_SGD_main} together with its proof. The proof is based on a similar idea as the proof of the complexity bounds for \algname{clipped-SSTM}.
\begin{theorem}\label{thm:main_result_clipped_SGD}
    Assume that \revision{the} function $f$ is convex,  achieves its minimum at a point $x^*$, and its stochastic gradient and its gradient satisfy \eqref{eq:bounded_variance_clipped_SSTM} and \eqref{eq:holder_def} respectively with $\sigma > 0$, $\nu \in [0,1]$, $M_\nu > 0$ on $Q = B_{7R_0}(x^*)$, where $R_0 \ge \|x^0 - x^*\|_2$. 
    Then, for all $\beta \in (0,1)$ and $N$ such that
    \begin{equation}
        \ln\frac{4N}{\beta} \ge 2, \label{eq:beta_N_condition_clipped_SGD}
    \end{equation}
    we have that after $N$ iterations of \algname{clipped-SGD} with
    \begin{equation}
        \lambda = \frac{R_0}{\gamma \ln\frac{4N}{\beta}} ,\quad m \ge \max\left\{1,\frac{81N\sigma^2}{\lambda^2\ln\frac{4N}{\beta}}\right\}\label{eq:bathces_clipped_SGD}
    \end{equation}
    and stepsize
    \begin{equation}
        \gamma \le \min \left\{\frac{\varepsilon^{\frac{1-\nu}{1+\nu}}}{8M_{\nu}^{\frac{2}{1+\nu}}}, \; \frac{R_0}{\sqrt{2N}\varepsilon^{\frac{\nu}{1+\nu}}M_{\nu}^{\frac{1}{1+\nu}}}, \; \frac{R_0^{1-\nu}}{2C^{\nu}M_{\nu}\ln\frac{4N}{\beta}}\right\},\label{eq:step_size_clipped_SGD}
    \end{equation}
    %\begin{equation}
    %    \gamma \le \frac{R_0}{M_0\sqrt{2N}},\label{eq:step_size_clipped_SGD}
    %\end{equation}
    with probability at least $1-\beta$ it holds that
    \begin{equation}
        f(\Bar{x}^N) - f(x^*) \le \frac{C^2R_0^2}{\gamma N}, \label{eq:main_result_clipped_SGD}
    \end{equation}
    where $\Bar{x}^N = \frac{1}{N}\sum_{k=0}^{N-1}x^k$ and
    \begin{equation}
        C = 7. \label{eq:C_definition_clipped_SGD}
    \end{equation}
    In other words, \algname{clipped-SGD} with $\gamma = \min \left\{\frac{\varepsilon^{\frac{1-\nu}{1+\nu}}}{8M_{\nu}^{\frac{2}{1+\nu}}},  \frac{R_0}{\sqrt{2N}\varepsilon^{\frac{\nu}{1+\nu}}M_{\nu}^{\frac{1}{1+\nu}}},  \frac{R_0^{1-\nu}}{2C^{\nu}M_{\nu}\ln\frac{4N}{\beta}}\right\}$ achieves $f(\Bar{x}^N) - f(x^*) \le \varepsilon$ with probability at least $1-\beta$ after \\
    $\cO\left(\max\left\{\frac{M_\nu^{\frac{2}{1+\nu}}R_0^2}{\varepsilon^{\frac{2}{1+\nu}}},\frac{M_\nu R_0^{1+\nu}}{\varepsilon}\ln\frac{M_{\nu}R_0^{1+\nu}}{\varepsilon\beta}\right\}\right)$ iterations and requires
    \begin{equation}
       \cO\left(\max\left\{\frac{M_\nu^{\frac{2}{1+\nu}}R_0^2}{\varepsilon^{\frac{2}{1+\nu}}},\max\left\{\frac{M_\nu R_0^{1+\nu}}{\varepsilon},\frac{\sigma^2R_0^2}{\varepsilon^2}\right\}\ln\frac{M_{\nu}R_0^{1+\nu}}{\varepsilon\beta}\right\}\right) \label{eq:clipped_SGD_oracle_complexity}
    \end{equation}
    oracle calls.
\end{theorem}

\begin{proof}
    Since $f(x)$ is convex and its gradients satisfy \eqref{eq:holder_def}, we get the following inequality under assumption that $x^k \in B_{7R_0}(x^*)$:
    \begin{eqnarray*}
        &&\|x^{k+1} - x^*\|_2^2 = \|x^k - \gamma \tnabla f(x^k,\Bxi^k) - x^*\|_2^2\\
        &=&
        \|x^k - x^*\|^2_2 + \gamma^2\|\tnabla f(x^k,\Bxi^k)\|^2_2 - 2\gamma \left\la x^k-x^*, \tnabla f(x^k,\Bxi^k)\right\ra\\
        &\overset{\revision{\eqref{eq:theta_k+1_def_clipped_SSTM}}}{=}& 
        \|x^k-x^*\|^2_2 + \gamma^2 \|\nabla f(x^k) + \theta_k\|^2_2 - 2\gamma \left\la x^k-x^*, \nabla f(x^k) + \theta_k \right\ra\\
        &\overset{\eqref{eq:squared_norm_sum}}{\le}& \|x^k-x^*\|^2_2 + 2 \gamma^2 \|\nabla f(x^k)\|^2_2 + 2\gamma^2 \|\theta_k\|^2_2 - 2\gamma \left\la x^k-x^*, \nabla f(x^k) + \theta_k\right\ra\\
        &\overset{\eqref{eq:holder_gradient_bound_2}}{\le}& \|x^k-x^*\|^2_2 - 2 \gamma \left( 1 - 2 \gamma \left(\frac{1}{\varepsilon}\right)^{\frac{1-\nu}{1+\nu}}M_{\nu}^{\frac{2}{1+\nu}}\right) \left( f(x^k) - f(x^*) \right) + 2\gamma^2 \|\theta_k\|^2_2\\
        &&\quad- 2\gamma \left\la x^k-x^*, \theta_k\right\ra + 2 \gamma^2 \varepsilon^{\frac{2\nu}{1+\nu}}M_{\nu}^{\frac{2}{1+\nu}},
    \end{eqnarray*}
    where $\theta_k = \tnabla f(x^k, \Bxi^k) - \nabla f(x^k)$ and the last inequality follows from the convexity of $f$. Using notation $R_k \eqdef \|x^k - x^*\|_2$, $k > 0$ we derive that for all $k\ge 0$
    \begin{eqnarray*}
          R_{k+1}^2 &\le& R_k^2 - 2 \gamma \left( 1 - 2 \gamma \left(\frac{1}{\varepsilon}\right)^{\frac{1-\nu}{1+\nu}}M_{\nu}^{\frac{2}{1+\nu}}\right) \left( f(x^k) - f(x^*) \right) + 2\gamma^2 \|\theta_k\|^2_2\notag\\
          &&\quad - 2\gamma \left\la x^k-x^*, \theta_k\right\ra + 2 \gamma^2 \varepsilon^{\frac{2\nu}{1+\nu}}M_{\nu}^{\frac{2}{1+\nu}}
    \end{eqnarray*}
    under assumption that $x^k \in B_{7R_0}(x^*)$. Let us define $A = 2 \gamma \left( 1 - 2 \gamma \left(\frac{1}{\varepsilon}\right)^{\frac{1-\nu}{1+\nu}}M_{\nu}^{\frac{2}{1+\nu}}\right) \overset{\eqref{eq:step_size_clipped_SGD}}{\ge} \revision{2\gamma \left(1 - 2\cdot \frac{\varepsilon^{\frac{1-\nu}{1+\nu}}}{8M_{\nu}^{\frac{2}{1+\nu}}}\cdot \left(\frac{1}{\varepsilon}\right)^{\frac{1-\nu}{1+\nu}}M_{\nu}^{\frac{2}{1+\nu}} \right) = \frac{3}{2}\gamma \geq}~\gamma > 0$, then
    \begin{equation*}
        A \left( f(x^k) - f(x^*) \right) \le R_k^2 - R_{k+1}^2 + 2\gamma^2 \|\theta_k\|^2_2 - 2\gamma \left\la x^k-x^*, \theta_k\right\ra + 2 \gamma^2 \varepsilon^{\frac{2\nu}{1+\nu}}M_{\nu}^{\frac{2}{1+\nu}}
    \end{equation*}
    under assumption that $x^k \in B_{7R_0}(x^*)$. Summing up these inequalities for \revision{$k=0,1, \ldots, N-1 $}, we obtain
    \revision{\begin{eqnarray*}
        &&\frac{A}{N} \sum\limits_{k=0}^{N-1}\left[ f(x^k) - f(x^*) \right]\\
        % &\le& \frac{1}{N} \sum\limits_{k=0}^{N-1}\left( R_k^2 - R_{k+1}^2\right) + 2 \gamma^2 \varepsilon^{\frac{2\nu}{1+\nu}}M_{\nu}^{\frac{2}{1+\nu}} + \frac{2\gamma^2}{N} \sum\limits_{k=0}^{N-1} \|\theta_k\|^2_2  - \frac{2\gamma}{N} \sum\limits_{k=0}^{N-1} \left\la x^k-x^*, \theta_k \right\ra \\
        &=& \frac{1}{N}\left( R_0^2 - R_{N}^2\right) + 2 \gamma^2 \varepsilon^{\frac{2\nu}{1+\nu}}M_{\nu}^{\frac{2}{1+\nu}} +  \frac{2\gamma^2}{N} \sum\limits_{k=0}^{N-1} \|\theta_k\|^2_2 - \frac{2\gamma}{N} \sum\limits_{k=0}^{N-1} \left\la x^k-x^*, \theta_k \right\ra
    \end{eqnarray*}}
    under assumption that $x^k \in B_{7R_0}(x^*)$. Noticing that for $\Bar{x}^N = \frac{1}{N}\sum\limits_{k=0}^{N-1}x^k$ Jensen's inequality gives $f(\Bar{x}^N) = f\left(\frac{1}{N}\sum\limits_{k=0}^{N-1}x^k\right) \le \frac{1}{N}\sum\limits_{k=0}^{N-1}f(x^k)$, we have
    \begin{eqnarray}
        A N \left( f(\Bar{x}^N) - f(x^*) \right) &\le& R_0^2 - R_{N}^2 + 2 \gamma^2 N \varepsilon^{\frac{2\nu}{1+\nu}}M_{\nu}^{\frac{2}{1+\nu}} + 2\gamma^2 \sum\limits_{k=0}^{N-1}\|\theta_k\|^2_2\notag\\
        &&\quad - 2\gamma \sum\limits_{k=0}^{N-1} \left\la x^k-x^*, \theta_k \right\ra
        \label{eq:main_thm_clipped_SGD_technical_0}
    \end{eqnarray}
    under assumption that $x^k \in B_{7R_0}(x^*)$ for $k = 0,1,\ldots,N-1$. Taking into account that $f(\Bar{x}^N) - f(x^*) \ge 0$ and changing the indices we get that for all $k = 0,1,\ldots,N$
    \begin{equation}
         R_k^2 \le R_{0}^2 + 2 \gamma^2 k \varepsilon^{\frac{2\nu}{1+\nu}}M_{\nu}^{\frac{2}{1+\nu}} + 2\gamma^2 \sum\limits_{l=0}^{k-1}\|\theta_l\|^2_2 - 2\gamma \sum\limits_{l=0}^{k-1} \left\la x^l-x^*, \revision{\theta_l} \right\ra.
         \label{eq:main_thm_clipped_SGD_technical_1}
    \end{equation}
    under assumption that $x^l \in B_{7R_0}(x^*)$ for $l = 0,1,\ldots,k-1$. The remaining part of the proof is based on the analysis of inequality \eqref{eq:main_thm_clipped_SGD_technical_1}. In particular, via induction we prove that for all $k=0,1,\ldots, N$ with probability at least $1 - \frac{k\beta}{N}$ \revision{we have $\PP\{E_k\} = 1 - \frac{k\beta}{N}$ for probability event $E_k$ defined as follows:}

    \revision{\begin{center}
        Event $E_k$:
    \end{center}
    \noindent\fbox{%
    \parbox{\textwidth}{%
    Inequalities
    \revision{\begin{eqnarray}
         R_t^2 &\overset{\eqref{eq:main_thm_clipped_SGD_technical_1}}{\le}& R_{0}^2 + 2 \gamma^2 t \varepsilon^{\frac{2\nu}{1+\nu}}M_{\nu}^{\frac{2}{1+\nu}} + 2\gamma^2 \sum\limits_{l=0}^{t-1}\|\theta_l\|^2_2 - 2\gamma \sum\limits_{l=0}^{t-1} \left\la x^l-x^*, \theta_l \right\ra \notag \\
         &\le& C^2R_0^2 \label{eq:main_thm_clipped_SGD_technical_2}
    \end{eqnarray}}
    hold for $t=0,1,\ldots,k$ simultaneously where $C$ is defined in \eqref{eq:C_definition_clipped_SGD}.
    }
    }}
    
    For $t = 0$ inequality \eqref{eq:main_thm_clipped_SGD_technical_2} holds with probability $1$ since $C \ge 1$. Next, assume that for some $k = T-1 \le N-1$ we have $\PP\{E_k\} = \PP\{E_{T-1}\} \ge 1 - \frac{(T-1)\beta}{N}$. Let us prove that $\PP\{E_{T}\} \ge 1 - \frac{T\beta}{N}$. First of all, probability event $E_{T-1}$ implies that $x^t \in B_{7R_0}(x^*)$ for $t=0,1,\ldots,T-1$, and, as a consequence, \eqref{eq:main_thm_clipped_SGD_technical_1} holds for $k = T$. Since $\nabla f(x)$ is $(\nu, M_\nu)$-H\"older continuous on $B_{7R_0}(x^*)$, we have that probability event $E_{T-1}$ implies
    \begin{eqnarray}
        \left\|\nabla f(x^{t})\right\|_2 \overset{\eqref{eq:holder_def}}{\le} M_{\nu} \|x^t-x^0\|^{\nu} \overset{\revision{\eqref{eq:main_thm_clipped_SGD_technical_2}}}\le M_{\nu}C^{\nu}R_0^{\nu} \overset{\eqref{eq:step_size_clipped_SGD}}{\le} \frac{\lambda}{2} \label{eq:main_thm_clipped_SGD_technical_4}
    \end{eqnarray}
    for \revision{$t=0,1,\ldots,T-1$}. Next, we introduce new random variables:
    \begin{equation}
        \eta_l = \begin{cases}x^* - x^l,&\text{if } \|x^* - x^l\|_2 \le CR_0,\\ 0,&\text{otherwise,} \end{cases}\quad \label{eq:main_thm_clipped_SGD_technical_4_1}
    \end{equation}
    for $l=0,1,\ldots, T-1$. Note that these random variables are bounded with probability $1$, i.e.\ with probability $1$, we have
    \begin{equation}
        \|\eta_l\|_2 \le CR_0.\label{eq:main_thm_clipped_SGD_technical_4_2}
    \end{equation}
    Using the introduced notation, we obtain that $E_{T-1}$ implies 
    \revision{\begin{eqnarray*}
         R_T^2 &\overset{\eqref{eq:main_thm_clipped_SGD_technical_2}, \eqref{eq:step_size_clipped_SGD}}{\leq}& R_{0}^2 + 2 \left(\frac{R_0}{\sqrt{2N}\varepsilon^{\frac{\nu}{1+\nu}}M_{\nu}^{\frac{1}{1+\nu}}}\right)^2 N \varepsilon^{\frac{2\nu}{1+\nu}}M_{\nu}^{\frac{2}{1+\nu}}\\
         &&\quad + 2\gamma^2 \sum\limits_{l=0}^{t-1}\|\theta_l\|^2_2 - 2\gamma \sum\limits_{l=0}^{t-1} \left\la x^l-x^*, \theta_l \right\ra\\
         &=& 2R_0^2 + 2\gamma^2 \sum\limits_{l=0}^{t-1}\|\theta_l\|^2_2 - 2\gamma \sum\limits_{l=0}^{t-1} \left\la x^l-x^*, \theta_l \right\ra\\
         &\overset{\eqref{eq:main_thm_clipped_SGD_technical_4_1},\eqref{eq:main_thm_clipped_SGD_technical_4_2}}{=}& 2R_0^2 + 2\gamma \sum\limits_{l=0}^{T-1}\left\la\theta_{l}, \eta_l\right\ra + 2\gamma^2\sum\limits_{l=0}^{T-1}\|\theta_{l}\|_2^2. \notag
    \end{eqnarray*}}
    Finally, we do some preliminaries in order to apply Bernstein's inequality (see Lemma~\ref{lem:Bernstein_ineq}) and obtain that $E_{T-1}$ implies
    \begin{eqnarray}
        R_T^2 &\overset{\eqref{eq:squared_norm_sum}}{\le}& 2R_0^2 + \underbrace{2\gamma\sum\limits_{l=0}^{T-1}\left\la\theta_{l}^u, \eta_l\right\ra}_{\circledOne}+ \underbrace{2\gamma\sum\limits_{l=0}^{T-1}\left\la\theta_{l}^b, \eta_l \right\ra}_{\circledTwo}\notag + \underbrace{4\gamma^2\sum\limits_{l=0}^{T-1}\left(\|\theta_{l}^u\|_2^2 - \EE_{\Bxi^l}\left[\|\theta_{l}^u\|_2^2\right]\right)}_{\circledThree}\\
        &&\quad  + \underbrace{4\gamma^2\sum\limits_{l=0}^{T-1}\EE_{\Bxi^l}\left[\|\theta_{l}^u\|_2^2\right]}_{\circledFour} + \underbrace{4\gamma^2\sum\limits_{l=0}^{T-1}\|\theta_{l}^b\|_2^2}_{\circledFive},\label{eq:main_thm_clipped_SGD_technical_5}
    \end{eqnarray}
    where we introduce new notations:
    \begin{equation}
        \theta_{l}^u \eqdef \tnabla f(x^{l},\Bxi^l) - \EE_{\Bxi^l}\left[\tnabla f(x^{l},\Bxi^l)\right],\quad \theta_{l}^b \eqdef \EE_{\Bxi^l}\left[\tnabla f(x^{l},\Bxi^l)\right] - \nabla f(x^{l}),\label{eq:main_thm_clipped_SGD_technical_6}
    \end{equation}
    \begin{equation}
        \theta_{l} = \theta_{l}^u +\theta_{l}^b.\notag
    \end{equation}
    It remains to provide tight upper bounds for $\circledOne$, $\circledTwo$, $\circledThree$, $\circledFour$ and $\circledFive$, i.e.\ in the remaining part of the proof we show that $\circledOne+\circledTwo+\circledThree+\circledFour+\circledFive \le \delta C^2R_0^2$ for some $\delta < 1$.
    
   %%%%%%%%%%%%%%%%%%%%%%%%%%%%%%%%%%%%%%%%%%%%%%%%%%%%%%%%%%%%%%%%%%%%%%%%%%%%%%%%%
    
    \textbf{Upper bound for $\circledOne$.} First of all, since $\EE_{\Bxi^l}[\theta_{l}^u] = 0$ summands in $\circledOne$ are conditionally unbiased:
        $\EE_{\Bxi^l}\left[2\gamma \left\la \theta_l^u, \eta_l\right\ra\right] = 0$.
    Secondly, these summands are bounded with probability $1$:
        $\left|2\gamma \left\la \theta_l^u, \eta_l\right\ra\right| \le 2\gamma\|\theta_{l}^u\|_2\left\|\eta_l\right\|_2 \overset{\eqref{eq:magnitude_bound_clipped_SSTM},\eqref{eq:main_thm_clipped_SGD_technical_4_2}}{\le} 4\gamma\lambda CR_0$.
    Finally, one can bound conditional variances $\sigma_l^2 \eqdef \EE_{\Bxi^l}\left[4\gamma^2\left\la\theta_{l}^u, \eta_l\right\ra^2\right]$ in the following way:
         $\sigma_l^2 \le \EE_{\Bxi^l}\left[4\gamma^2\left\|\theta_{l}^u\right\|_2^2 \left\|\eta_l\right\|_2^2\right] \overset{\eqref{eq:main_thm_clipped_SGD_technical_4_2}}{\le} 4\gamma^2(CR_0)^2\EE_{\Bxi^l}\left[\left\|\theta_{l}^u\right\|_2^2\right]$,
    i.e., $\sigma_l^2$ is finite due to finiteness of $\|\theta_{l+1}^u\|_2$ (see Lemma~\ref{lem:main_stoch_lemma_clipped_SSTM}). In other words, sequence $\left\{2\gamma \left\la \theta_l^u, \eta_l\right\ra\right\}_{l\ge 0}$ is a bounded martingale difference sequence with bounded conditional variances $\{\sigma_l^2\}_{l\ge 0}$.  Therefore, we can apply Bernstein's inequality, i.e., we apply Lemma~\ref{lem:Bernstein_ineq} with $X_l = 2\gamma \left\la \theta_l^u, \eta_l\right\ra$, $c = 4\gamma\lambda CR_0$ and $F = \frac{c^2\ln\frac{4N}{\beta}}{6}$ and get that for all $b > 0$
    \begin{equation*}
        \PP\left\{\left|\sum\limits_{l=0}^{T-1}X_l\right| > b\text{ and } \sum\limits_{l=0}^{T-1}\sigma_l^2 \le F\right\} \le 2\exp\left(-\frac{b^2}{2F + \nicefrac{2cb}{3}}\right)
    \end{equation*}
    or, equivalently, with probability at least $1 - 2\exp\left(-\frac{b^2}{2F + \nicefrac{2cb}{3}}\right)$
    \begin{equation*}
        \text{either } \sum\limits_{l=0}^{T-1}\sigma_l^2 > F \quad \text{or} \quad \underbrace{\left|\sum\limits_{l=0}^{T-1}X_l\right|}_{|\circledOne|} \le b.
    \end{equation*}
    The choice of $F$ will be clarified \revision{further.} Let us now choose $b$ in such a way that $2\exp\left(-\frac{b^2}{2F + \nicefrac{2cb}{3}}\right) = \frac{\beta}{2N}$. This implies that $b$ is the positive root of the quadratic equation \begin{equation*}
        b^2 - \frac{2c\ln\frac{4N}{\beta}}{3}b - 2F\ln\frac{4N}{\beta} = 0,
    \end{equation*}
    hence
    \begin{eqnarray*}
         b &=& \frac{c\ln\frac{4N}{\beta}}{3} + \sqrt{\frac{c^2\ln^2\frac{4N}{\beta}}{9} + 2F\ln\frac{4N}{\beta}}=\frac{c\ln\frac{4N}{\beta}}{3} + \sqrt{\frac{4c^2\ln^2\frac{4N}{\beta}}{9}}\\
         &=& c\ln\frac{4N}{\beta} = 4\gamma\lambda CR_0\ln\frac{4N}{\beta}.
    \end{eqnarray*}
    That is, with probability at least $1 - \frac{\beta}{2N}$
     \begin{equation*}
        \underbrace{\text{either } \sum\limits_{l=0}^{T-1}\sigma_l^2 > F \quad \text{or} \quad \left|\circledOne\right| \le 4\gamma \lambda CR_0\ln\frac{4N}{\beta}}_{\text{probability event } E_{\circledOne}}.
    \end{equation*}
    Here and below, we notice that the conditions of Lemma~\ref{lem:main_stoch_lemma_clipped_SSTM} hold when $E_{T-1}$ holds, since event $E_{T-1}$ implies that $x^0, x^1, \ldots, x^T$ lie in $B_{7R_0}(x^*)$. Therefore, probability event $E_{T-1}$ implies that
    \begin{eqnarray*}
        \sum\limits_{l=0}^{T-1}\sigma_l^2 &\le& 4\gamma^2(CR_0)^2\sum\limits_{l=0}^{T-1}\EE_{\Bxi^l}\left[\|\theta_l^u\|_2^2\right] \overset{\eqref{eq:variance_bound_clipped_SSTM}}{\le} 72\gamma^2 (CR_0)^2 \sigma^2 \frac{T}{m}\\
        &\overset{T\le N}{\le}& 72\gamma^2 (CR_0)^2 \sigma^2 \frac{N}{m} \le \frac{c^2\ln\frac{4N}{\beta}}{6} = F,
    \end{eqnarray*}
    where the last inequality follows from $c = 4\gamma\lambda CR_0$ and simple arithmetic.
    
   %%%%%%%%%%%%%%%%%%%%%%%%%%%%%%%%%%%%%%%%%%%%%%%%%%%%%%%%%%%%%%%%%%%%%%%%%%%%%%%%%
    
    \textbf{Upper bound for $\circledTwo$.} First of all, we notice that probability event $E_{T-1}$ implies
     \begin{eqnarray*}
        2\gamma\left\la\theta_{l}^b, \eta_l \right\ra &\le& 2\gamma\left\|\theta_{l}^b\right\|_2\left\|\eta_l\right\|_2\overset{\eqref{eq:bias_bound_clipped_SSTM},\eqref{eq:main_thm_clipped_SGD_technical_4_2}}{\le} 2\gamma\frac{4\sigma^2}{m \lambda}CR_0 = \frac{8\gamma \sigma^2 CR_0}{m \lambda}.
    \end{eqnarray*}
    This implies that
    \begin{eqnarray*}
        \circledTwo &=& 2\gamma\sum\limits_{l=0}^{T-1}\left\la\theta_{l}^b,\eta_l\right\ra \overset{T\le N}{\le} \frac{8\gamma \sigma^2 CR_0 N}{m \lambda}\overset{\eqref{eq:bathces_clipped_SGD}}{\le} \frac{8}{81} \lambda \gamma CR_0 \ln\frac{4N}{\beta}.
    \end{eqnarray*}
    
    %%%%%%%%%%%%%%%%%%%%%%%%%%%%%%%%%%%%%%%%%%%%%%%%%%%%%%%%%%%%%%%%%%%%%%%%%%%%%%%%%
    
     \textbf{Upper bound for $\circledThree$.} We derive the upper bound for $\circledThree$ using the same technique as for $\circledOne$. First of all, we notice that the summands in $\circledThree$ are conditionally unbiased:
        $\EE_{\Bxi^l}\left[4\gamma^2\left(\|\theta_{l}^u\|_2^2 - \EE_{\Bxi^l}\left[\|\theta_{l}^u\|_2^2\right]\right)\right] = 0$.
    \revision{Second}, the summands are bounded with probability $1$:
    \begin{eqnarray}
        \left|4\gamma^2\left(\|\theta_{l}^u\|_2^2 - \EE_{\Bxi^l}\left[\|\theta_{l}^u\|_2^2\right]\right)\right| &\le& 4\gamma^2\left(\|\theta_{l}^u\|_2^2 + \EE_{\Bxi^l}\left[\|\theta_{l}^u\|_2^2\right]\right)\overset{\eqref{eq:magnitude_bound_clipped_SSTM}}{\le} 4\gamma^2\left(4\lambda^2 + 4\lambda^2\right)\notag\\
        &=& 32 \gamma^2 \lambda^2  \eqdef c_1.\label{eq:main_thm_clipped_SGD_technical_8}
    \end{eqnarray}
    Finally, one can bound conditional variances \\$\hat \sigma_l^2 \eqdef \EE_{\Bxi^l}\left[\left|4\gamma^2\left(\|\theta_{l}^u\|_2^2 - \EE_{\Bxi^l}\left[\|\theta_{l}^u\|_2^2\right]\right)\right|^2\right]$ in the following way:
    \begin{eqnarray}
         \hat \sigma_l^2 &\overset{\eqref{eq:main_thm_clipped_SGD_technical_8}}{\le}& c_1\EE_{\Bxi^l}\left[\left|4\gamma^2\left(\|\theta_{l}^u\|_2^2 - \EE_{\Bxi^l}\left[\|\theta_{l}^u\|_2^2\right]\right)\right|\right]\notag\\
         &\le& 4\gamma^2c_1\EE_{\Bxi^l}\left[\|\theta_{l}^u\|_2^2 + \EE_{\Bxi^l}\left[\|\theta_{l}^u\|_2^2\right]\right] = 8\gamma^2c_1\EE_{\Bxi^l}\left[\|\theta_{l}^u\|_2^2\right],\label{eq:main_thm_clipped_SGD_technical_9}
    \end{eqnarray}
    i.e., $\hat \sigma_l^2$ is finite due to finiteness of $\|\theta_{l+1}^u\|_2$ (see Lemma~\ref{lem:main_stoch_lemma_clipped_SSTM})\revision{.} In other words, sequence $\left\{4\gamma^2\left(\|\theta_{l}^u\|_2^2 - \EE_{\Bxi^l}\left[\|\theta_{l}^u\|_2^2\right]\right)\right\}_{l\ge 0}$ is a bounded martingale difference sequence with bounded conditional variances $\{\hat \sigma_l^2\}_{l\ge 0}$. Therefore, we can apply Bernstein's inequality, i.e.\ we apply Lemma~\ref{lem:Bernstein_ineq} with $X_l = \hat X_l = 4\gamma^2\left(\|\theta_{l}^u\|_2^2 - \EE_{\Bxi^l}\left[\|\theta_{l}^u\|_2^2\right]\right)$, $c = c_1 = 32 \gamma^2 \lambda^2$ and $F = F_1 = \frac{c_1^2\ln\frac{4N}{\beta}}{18}$ and get that for all $b > 0$
    \begin{equation*}
        \PP\left\{\left|\sum\limits_{l=0}^{T-1}\hat X_l\right| > b\text{ and } \sum\limits_{l=0}^{T-1}\hat \sigma_l^2 \le F_1\right\} \le 2\exp\left(-\frac{b^2}{2F_1 + \nicefrac{2c_1b}{3}}\right)
    \end{equation*}
    or, equivalently, with probability at least $1 - 2\exp\left(-\frac{b^2}{2F_1 + \nicefrac{2c_1b}{3}}\right)$
    \begin{equation*}
        \text{either } \sum\limits_{l=0}^{T-1}\hat\sigma_l^2 > F_1 \quad \text{or} \quad \underbrace{\left|\sum\limits_{l=0}^{T-1}\hat X_l\right|}_{|\circledThree|} \le b.
    \end{equation*}
    As in our derivations of the upper bound for $\circledOne$ we choose such $b$ that $2\exp\left(-\frac{b^2}{2F_1 + \nicefrac{2c_1b}{3}}\right) = \frac{\beta}{2N}$, i.e.,
    \begin{eqnarray*}
         b &=& \frac{c_1\ln\frac{4N}{\beta}}{3} + \sqrt{\frac{c_1^2\ln^2\frac{4N}{\beta}}{9} + 2F_1\ln\frac{4N}{\beta}} \le c_1\ln\frac{4N}{\beta} = 32 \gamma^2 \lambda^2 \ln\frac{4N}{\beta} .
    \end{eqnarray*}
    That is, with probability at least $1 - \frac{\beta}{2N}$
     \begin{equation*}
        \underbrace{\text{either } \sum\limits_{l=0}^{T-1}\hat\sigma_l^2 > F_1 \quad \text{or} \quad \left|\circledThree\right| \le 32 \gamma^2 \lambda^2 \ln\frac{4N}{\beta}}_{\text{probability event } E_{\circledThree}}.
    \end{equation*}
    Next, we notice that probability event $E_{T-1}$ implies that
    \begin{eqnarray*}
        \sum\limits_{l=0}^{T-1}\hat\sigma_l^2 &\overset{\eqref{eq:main_thm_clipped_SGD_technical_9}}{\le}& 8\gamma^2 c_1\sum\limits_{l=0}^{T-1}\EE_{\Bxi^l}\left[\left\|\theta_{l}^u\right\|_2^2\right]\overset{\eqref{eq:variance_bound_clipped_SSTM}}{\le} 144\gamma^2 c_1 \sigma^2\frac{T}{m}\\
        &\overset{T\le N}{\le}& 
        144\gamma^2 c_1 \sigma^2\frac{N}{m} \revision{\overset{\eqref{eq:bathces_clipped_SGD}}{\leq} \frac{144\gamma^2 \lambda^2 c_1 \ln \frac{4N}{\beta}}{81} = \frac{c_1^2\ln\frac{4N}{\beta}}{18} =}~ F_1.
    \end{eqnarray*}
    
    %%%%%%%%%%%%%%%%%%%%%%%%%%%%%%%%%%%%%%%%%%%%%%%%%%%%%%%%%%%%%%%%%%%%%%%%%%%%%%%%%
    
     \textbf{Upper bound for $\circledFour$.} The probability event $E_{T-1}$ implies
    \begin{eqnarray*}
        \circledFour &=& 4\gamma^2\sum\limits_{l=0}^{T-1}\EE_{\Bxi^l}\left[\|\theta_{l}^u\|_2^2\right] \overset{\eqref{eq:variance_bound_clipped_SSTM}}{\le} 72\gamma^2\sigma^2\sum\limits_{l=0}^{T-1}\frac{1}{m} \overset{T\le N}{\le} \frac{72\gamma^2N\sigma^2}{m}\overset{\eqref{eq:bathces_clipped_SGD}}{\le} \frac{8}{9} \lambda^2 \gamma^2 \ln \frac{4N}{\beta}.
    \end{eqnarray*}
    
    %%%%%%%%%%%%%%%%%%%%%%%%%%%%%%%%%%%%%%%%%%%%%%%%%%%%%%%%%%%%%%%%%%%%%%%%%%%%%%%%%
    
    \textbf{Upper bound for $\circledFive$.} Again, we use corollaries of probability event $E_{T-1}$:
    \begin{eqnarray*}
        \circledFive &=& 4\gamma^2\sum\limits_{l=0}^{T-1}\|\theta_{l}^b\|_2^2 \overset{\eqref{eq:bias_bound_clipped_SSTM}}{\le} 64 \gamma^2 \sigma^4\frac{T}{m^2\lambda^2} \overset{T\le N}{\le} 64 \gamma^2 \sigma^4\frac{N}{m^2\lambda^2}\overset{\eqref{eq:bathces_clipped_SGD}}{\le} \frac{64}{6561} \frac{\lambda^2 \gamma^2 \ln^2 \frac{4N}{\beta}}{N}.
    \end{eqnarray*}

    Now we summarize all bounds that we have: probability event $E_{T-1}$ implies
    \revision{\begin{gather*}
         R_T^2 
         % \overset{\eqref{eq:main_thm_clipped_SGD_technical_1}}{\le} 2R_{0}^2 + 2\gamma^2 \sum\limits_{l=0}^{T-1}\|\theta_l\|^2_2 - 2\gamma \sum\limits_{l=0}^{T-1} \left\la x^l-x^*, \theta_l \right\ra\\
        \overset{\eqref{eq:main_thm_clipped_SGD_technical_5}}{\le} 2R_0^2 + \circledOne + \circledTwo + \circledThree + \circledFour + \circledFive,\\
        \circledTwo \le \frac{8}{81} \lambda \gamma CR_0 \ln\frac{4N}{\beta},\quad \circledFour \le \frac{8}{9} \lambda^2 \gamma^2 \ln \frac{4N}{\beta},\quad \circledFive \le \frac{64}{6561} \frac{\lambda^2 \gamma^2 \ln^2 \frac{4N}{\beta}}{N},\\
        \sum\limits_{l=0}^{T-1}\sigma_l^2 \le F,\quad \sum\limits_{l=0}^{T-1}\hat\sigma_l^2 \le F_1
    \end{gather*}}
    and        $\PP\{E_{T-1}\} \ge 1 - \frac{(T-1)\beta}{N},\quad \PP\{E_\circledOne\} \ge 1 - \frac{\beta}{2N},\quad \PP\{E_\circledThree\} \ge 1 - \frac{\beta}{2N}$,
    % where
    \begin{eqnarray*}
        \text{where} \;\; E_{\circledOne} &=& \left\{\text{either } \sum\limits_{l=0}^{T-1}\sigma_l^2 > F \quad \text{or} \quad \left|\circledOne\right| \le 4\gamma\lambda CR_0\ln\frac{4N}{\beta}\right\},\\
        E_{\circledThree} &=& \left\{\text{either } \sum\limits_{l=0}^{T-1}\hat\sigma_l^2 > F_1 \quad \text{or} \quad \left|\circledThree\right| \le 32 \gamma^2 \lambda^2 \ln\frac{4N}{\beta}\right\}.
    \end{eqnarray*}
    Taking into account these inequalities and our assumptions on $\lambda$ and $\gamma$ (see \eqref{eq:bathces_clipped_SGD} and \eqref{eq:step_size_clipped_SGD}) we get that probability event $E_{T-1}\cap E_\circledOne \cap E_\circledThree$ implies
   \revision{\begin{eqnarray}
         R_T^2 
         % &\overset{\eqref{eq:main_thm_clipped_SGD_technical_1}}{\le}& 2R_{0}^2 + 2\gamma^2 \sum\limits_{l=0}^{T-1}\|\theta_l\|^2_2 - 2\gamma \sum\limits_{l=0}^{T-1} \left\la x^l-x^*, \theta_l \right\ra\notag\\
         &\le& 2R_0^2 + \left(\frac{4}{7} + \frac{8}{567} + \frac{16}{49} + \frac{4}{441} + \frac{64}{321489}\right)C^2R_0^2 \overset{\eqref{eq:C_definition_clipped_SGD}}{\le} C^2R_0^2.\label{eq:main_thm_clipped_SGD_technical_10}
    \end{eqnarray}}
    Moreover, \revision{using the bound for the union,} we derive
    \begin{equation}
        \PP\left\{E_{T-1}\cap E_\circledOne \cap E_\circledThree\right\} = 1 - \PP\left\{\overline{E}_{T-1}\cup\overline{E}_\circledOne\cup\overline{E}_\circledThree\right\} \ge 1 - \frac{T\beta}{N}.\label{eq:main_thm_clipped_SGD_technical_11}
    \end{equation}
    That is, by definition of $E_T$ and $E_{T-1}$ we have \revision{proven} that
    \begin{eqnarray*}
        \PP\{E_T\} &\overset{\eqref{eq:main_thm_clipped_SGD_technical_10}}{\ge}& \PP\left\{E_{T-1}\cap E_\circledOne \cap E_\circledThree\right\} \overset{\eqref{eq:main_thm_clipped_SGD_technical_11}}{\ge} 1 - \frac{T\beta}{N},
    \end{eqnarray*}
    which implies that for all $k = 0,1,\ldots, N$ we have $\PP\{E_k\} \ge 1 - \frac{k\beta}{N}$. Then, for $k = N$ we have that with probability at least $1-\beta$
    \revision{\begin{eqnarray*}
        AN\left(f(\Bar{x}^N) - f(x^*)\right) &\overset{\eqref{eq:main_thm_clipped_SGD_technical_0}, \eqref{eq:step_size_clipped_SGD}}{\le}& R_0^2 + 2 \left(\frac{R_0}{\sqrt{2N}\varepsilon^{\frac{\nu}{1+\nu}}M_{\nu}^{\frac{1}{1+\nu}}}\right)^2 N \varepsilon^{\frac{2\nu}{1+\nu}}M_{\nu}^{\frac{2}{1+\nu}}\\
        &&\quad +  2\gamma^2 \sum\limits_{k=0}^{N-1}\|\theta_k\|^2_2 - 2\gamma \sum\limits_{k=0}^{N-1} \left\la x^k-x^*, \theta_k \right\ra \\
        &\leq& 2R_0^2 +  2\gamma^2 \sum\limits_{k=0}^{N-1}\|\theta_k\|^2_2 - 2\gamma \sum\limits_{k=0}^{N-1} \left\la x^k-x^*, \theta_k \right\ra\\
        &\overset{\eqref{eq:main_thm_clipped_SGD_technical_2}}{\le}& C^2R_0^2.
    \end{eqnarray*}}
    Since $A = 2 \gamma \left( 1 - 2 \gamma \left(\frac{1}{\varepsilon}\right)^{\frac{1-\nu}{1+\nu}}M_{\nu}^{\frac{2}{1+\nu}}\right)\overset{\eqref{eq:step_size_clipped_SGD}}{\ge} \gamma$
    we get that with probability at least $1-\beta$, it holds that 
        $f(\Bar{x}^N) - f(x^*) \le \frac{C^2R_0^2}{AN} \revision{\leq} \frac{C^2R_0^2}{\gamma N}$.
    When 
    \begin{equation*}
        \gamma = \min \left\{\frac{\varepsilon^{\frac{1-\nu}{1+\nu}}}{8M_{\nu}^{\frac{2}{1+\nu}}},  \frac{R_0}{\sqrt{2N}\varepsilon^{\frac{\nu}{1+\nu}}M_{\nu}^{\frac{1}{1+\nu}}},  \frac{R_0^{1-\nu}}{2C^{\nu}M_{\nu}\ln\frac{4N}{\beta}}\right\}
    \end{equation*}
    we have that with probability at least $1-\beta$
    \begin{eqnarray*}
        &&\hspace{-2em}f(\Bar{x}^N) - f(x^*) \\
        &\le& \max\left\{\frac{8C^2M_\nu^{\frac{2}{1+\nu}}R_0^2}{\varepsilon^{\frac{1-\nu}{1+\nu}} N}, \frac{\sqrt{2}C^2M_\nu^{\frac{1}{1+\nu}}R_0\varepsilon^{\frac{\nu}{1+\nu}}}{\sqrt{N}}, \frac{2C^{2+\nu}M_\nu R_0^{1+\nu}\ln\frac{4N}{\beta}}{N}\right\}.
    \end{eqnarray*}
    Next, we estimate the iteration and oracle complexities of the method and consider $3$ possible situations.
    \begin{enumerate}
        \item If $\gamma = \frac{\varepsilon^{\frac{1-\nu}{1+\nu}}}{8M_{\nu}^{\frac{2}{1+\nu}}}$, then with probability at least $1-\beta$
        \begin{equation*}
            f(\Bar{x}^N) - f(x^*) \le \frac{8C^2M_\nu^{\frac{2}{1+\nu}}R_0^2}{\varepsilon^{\frac{1-\nu}{1+\nu}} N}.
        \end{equation*}
        In other words, \algname{clipped-SGD} achieves $f(\Bar{x}^N) - f(x^*) \le \varepsilon$ with probability at least $1-\beta$ after
     \begin{equation*}
         \cO\left(\frac{M_\nu^{\frac{2}{1+\nu}}R_0^2}{\varepsilon^{\frac{2}{1+\nu}}}\right)
     \end{equation*}
     iterations and requires
    \begin{eqnarray*}
         Nm &\overset{\eqref{eq:bathces_clipped_SGD}}{=}&  \cO\left(\max\left\{N,\frac{N^2\sigma^2\gamma^2\ln\frac{N}{\beta}}{R_0^2}\right\}\right) \\
         &=&  \cO\left(\max\left\{N,\frac{N^2\varepsilon^{\frac{2(1-\nu)}{1+\nu}}\sigma^2\ln\frac{N}{\beta}}{M_\nu^{\frac{4}{1+\nu}}R_0^2}\right\}\right)\\
         &=& \cO\left(\max\left\{\frac{M_\nu^{\frac{2}{1+\nu}}R_0^2}{\varepsilon^{\frac{2}{1+\nu}}},\frac{\sigma^2R_0^2}{\varepsilon^2}\ln\frac{M_\nu^{\frac{2}{1+\nu}}R_0^2}{\varepsilon^{\frac{2}{1+\nu}}\beta}\right\}\right)
    \end{eqnarray*}
    oracle calls.
    
    \item If $\gamma = \frac{R_0}{\sqrt{2N}\varepsilon^{\frac{\nu}{1+\nu}}M_{\nu}^{\frac{1}{1+\nu}}}$, then with probability at least $1-\beta$
        \begin{equation*}
            f(\Bar{x}^N) - f(x^*) \le \frac{\sqrt{2}C^2M_\nu^{\frac{1}{1+\nu}}R_0\varepsilon^{\frac{\nu}{1+\nu}}}{\sqrt{N}}.
        \end{equation*}
        In other words, \algname{clipped-SGD} achieves $f(\Bar{x}^N) - f(x^*) \le \varepsilon$ with probability at least $1-\beta$ after
     \begin{equation*}
         \cO\left(\frac{M_\nu^{\frac{2}{1+\nu}}R_0^2}{\varepsilon^{\frac{2}{1+\nu}}}\right)
     \end{equation*}
     iterations and requires
    \begin{eqnarray*}
         Nm &\overset{\eqref{eq:bathces_clipped_SGD}}{=}&  \cO\left(\max\left\{N,\frac{N^2\sigma^2\gamma^2\ln\frac{N}{\beta}}{R_0^2}\right\}\right) =  \cO\left(\max\left\{N,\frac{N\sigma^2\ln\frac{N}{\beta}}{\varepsilon^{\frac{2\nu}{1+\nu}} M_\nu^{\frac{2}{1+\nu}}}\right\}\right)\\
         &=& \cO\left(\max\left\{\frac{M_\nu^{\frac{2}{1+\nu}}R_0^2}{\varepsilon^{\frac{2}{1+\nu}}},\frac{\sigma^2R_0^2}{\varepsilon^2}\ln\frac{M_\nu^{\frac{2}{1+\nu}}R_0^2}{\varepsilon^{\frac{2}{1+\nu}}\beta}\right\}\right)
    \end{eqnarray*}
    oracle calls.
    
    \item If $\gamma =  \frac{R_0^{1-\nu}}{2C^{\nu}M_{\nu}\ln\frac{4N}{\beta}}$, then with probability at least $1-\beta$
        \begin{equation*}
            f(\Bar{x}^N) - f(x^*) \le \frac{2C^{2+\nu}M_\nu R_0^{1+\nu}\ln\frac{4N}{\beta}}{N}.
        \end{equation*}
        In other words, \algname{clipped-SGD} achieves $f(\Bar{x}^N) - f(x^*) \le \varepsilon$ with probability at least $1-\beta$ after
     \begin{equation*}
         \cO\left(\frac{M_\nu R_0^{1+\nu}\ln\frac{M_\nu R_0^{1+\nu}}{\varepsilon\beta}}{\varepsilon}\right)
     \end{equation*}
     iterations and requires
    \begin{eqnarray*}
         Nm &\overset{\eqref{eq:bathces_clipped_SGD}}{=}&  \cO\left(\max\left\{N,\frac{N^2\sigma^2\gamma^2\ln\frac{N}{\beta}}{R_0^2}\right\}\right) =  \cO\left(\max\left\{N,\frac{N^2\sigma^2}{M_\nu^2 R_0^{2\nu}\ln\frac{N}{\beta}}\right\}\right)\\
         &=& \cO\left(\max\left\{\frac{M_\nu R_0^{1+\nu}}{\varepsilon},\frac{\sigma^2R_0^2}{\varepsilon^2}\right\}\ln\frac{M_\nu R_0^{1+\nu}}{\varepsilon\beta}\right)
    \end{eqnarray*}
    oracle calls.
    \end{enumerate}
    Putting all together and noticing that $\ln\frac{M_\nu^{\frac{2}{1+\nu}}R_0^2}{\varepsilon^{\frac{2}{1+\nu}}\beta} = \cO\left(\ln\frac{M_\nu R_0^{1+\nu}}{\varepsilon\beta}\right)$ we get the desired result. \qed
\end{proof}

As for \algname{clipped-SSTM}, it is possible to get rid of using large batch sizes without sacrificing the oracle complexity via a proper choice of $\gamma$.

\begin{corollary}\label{cor:SGD_cvx_small_batch}
    Let the assumptions of Theorem~\ref{thm:main_result_clipped_SGD} hold and
    \begin{equation}
        \gamma = \min \left\{\frac{\varepsilon^{\frac{1-\nu}{1+\nu}}}{8M_{\nu}^{\frac{2}{1+\nu}}},  \frac{R_0}{\sqrt{2N}\varepsilon^{\frac{\nu}{1+\nu}}M_{\nu}^{\frac{1}{1+\nu}}},  \frac{R_0^{1-\nu}}{2C^{\nu}M_{\nu}\ln\frac{4N}{\beta}}, \frac{R_0}{9\sigma \sqrt{N \ln\frac{4N}{\beta}}}\right\}. \label{eq:refined_assumption_on_gamma_SGD}
    \end{equation}
    Then for all $k=0,1,\ldots,N-1$ one can use $m = 1$ and to achieve $f(\Bar{x}^N) - f(x^*) \le \varepsilon$ with probability at least $1-\beta$ \algname{clipped-SGD} requires
    \begin{equation}
        \cO\left(\max\left\{\frac{M_\nu^{\frac{2}{1+\nu}}R_0^2}{\varepsilon^{\frac{2}{1+\nu}}},\max\left\{\frac{M_\nu R_0^{1+\nu}}{\varepsilon},\frac{\sigma^2R_0^2}{\varepsilon^2}\right\}\ln\left(\frac{M_{\nu}R_0^{1+\nu}}{\varepsilon\beta} + \frac{\sigma^2 R_0^2}{\varepsilon^2\beta}\right)\right\}\right) \label{eq:SGD_complexity_small_batch_cvx}
    \end{equation}
    iterations/oracle calls.
\end{corollary}
\begin{proof}
    First of all, we verify that $m = 1$ is a valid choice. The only assumption on $m$ is given in \eqref{eq:bathces_clipped_SGD}:
        $m \overset{\eqref{eq:bathces_clipped_SGD}}{\geq} \max\left\{1, \frac{81N\sigma^2}{\lambda^2\ln \tfrac{4N}{\beta}}\right\}.$
    Since $\gamma \leq \tfrac{R_0}{9\sigma \sqrt{N \ln\frac{4N}{\beta}}}$, we have
    \revision{\begin{eqnarray*}
         \max\left\{1, \frac{81N\sigma^2}{\lambda^2\ln \tfrac{4N}{\beta}}\right\} &\overset{\eqref{eq:bathces_clipped_SGD}}{=}& \max\left\{1, \frac{81\gamma^2\sigma^2N\ln\tfrac{4N}{\beta}}{R_0^2}\right\} \leq 1
% &\leq& \max\left\{1, \frac{R_0^2}{81\sigma^2N\ln\tfrac{4N}{\beta}}\cdot\frac{81\sigma^2N\ln\tfrac{4N}{\beta}}{R_0^2}\right\} = 1.
    \end{eqnarray*}}
    Therefore, for $\gamma$ given in \eqref{eq:refined_assumption_on_gamma_SGD} one can use $m = 1$.
    
    Next, if the minimum in \eqref{eq:refined_assumption_on_gamma_SGD} is attained on any of the first three terms, then applying the derivations from the end of the proof of Theorem~\ref{thm:main_result_clipped_SGD}, we get that the method requires 
    \begin{equation}
       \cO\left(\max\left\{\frac{M_\nu^{\frac{2}{1+\nu}}R_0^2}{\varepsilon^{\frac{2}{1+\nu}}},\max\left\{\frac{M_\nu R_0^{1+\nu}}{\varepsilon},\frac{\sigma^2R_0^2}{\varepsilon^2}\right\}\ln\frac{M_{\nu}R_0^{1+\nu}}{\varepsilon\beta}\right\}\right) \notag
    \end{equation}
    iterations/oracle calls to achieve $f(\Bar{x}^N) - f(x^*) \le \varepsilon$ with probability at least $1-\beta$. If  $\gamma = \tfrac{R_0}{9\sigma \sqrt{N \ln\frac{4N}{\beta}}}$, then with probability at least $1-\beta$
        \begin{equation*}
            f(\Bar{x}^N) - f(x^*) \overset{\eqref{eq:main_result_clipped_SGD}}{\le} \frac{9 C^2 R_0 \sigma \sqrt{\ln\tfrac{4N}{\beta}}}{\sqrt{N}}.
        \end{equation*}
        In other words, \algname{clipped-SGD} achieves $f(\Bar{x}^N) - f(x^*) \le \varepsilon$ with probability at least $1-\beta$ after\footnote{\revision{To achieve $f(\Bar{x}^N) - f(x^*) \le \varepsilon$ it is sufficient to take $N$ such that $\frac{9 C^2 R_0 \sigma \sqrt{\ln\tfrac{4N}{\beta}}}{\sqrt{N}} \leq \varepsilon$. Solving this inequality w.r.t.\ $N$, we get that it is sufficient to take $N$ such that $N \geq \frac{81C^4\sigma^2 R_0^2 \ln \frac{4N}{\beta}}{\varepsilon^2}$, e.g., $N = \left\lceil\frac{162C^4\sigma^2 R_0^2 \ln \left(\frac{648C^4\sigma^2 R_0^2}{\varepsilon^2\beta}\right)}{\varepsilon^2}\right\rceil$ satisfies this inequality.}}
     \begin{equation*}
         \cO\left(\frac{\sigma^2 R_0^2\ln\frac{\sigma^2 R_0^2}{\varepsilon^2\beta}}{\varepsilon^2}\right)
     \end{equation*}
     iterations/oracle calls. Putting all together, we get the desired result. \qed
\end{proof}

\subsection{Strongly Convex Case}\label{sec:clipped_SGD_str_cvx_appendix}
In this section, we provide a full statement of Theorem~\ref{thm:main_result_clipped_SGD_str_cvx_main} together with its proof.  Note that due to strong convexity, the solution $x^*$ is unique. 
\begin{theorem}\label{thm:main_result_clipped_SGD_str_cvx}
    Assume that function $f$ is $\mu$-strongly convex, its stochastic gradient and its gradient satisfy \eqref{eq:bounded_variance_clipped_SSTM} and \eqref{eq:holder_def} respectively with $\sigma > 0$, $\nu \in [0,1]$, $M_\nu > 0$ on $Q = B_{7R_0}(x^*)$, where $R_0 \ge \|x^0 - x^*\|_2$. Let $\varepsilon > 0$, $\beta\in (0,1)$, and for all $t = 1,\ldots, \tau$\revision{, where $\tau = \left\lceil\log_2\frac{\mu R_0^2}{\varepsilon} \right\rceil - 1$,}
    \begin{equation*}
        N_t = \max\left\{\frac{2C^4M_\nu^{\frac{2}{1+\nu}}R_0^2}{2^t\varepsilon_t^{\frac{2}{1+\nu}}}, \frac{4C^{2+\nu}M_\nu R_0^{1+\nu}\ln\frac{16C^{2+\nu}M_\nu R_0^{1+\nu}}{2^{\frac{(1+\nu)t}{2}}\varepsilon_t\beta}}{2^{\frac{(1+\nu)t}{2}}\varepsilon_t}\right\},\quad \varepsilon_t = \frac{\mu R_0^2}{2^{t+1}},
    \end{equation*}
    \begin{equation*}
         \lambda_t = \frac{R_0}{2^{\frac{t}{2}}\gamma_t \ln\frac{4N_t\tau}{\beta}} ,\quad m_t \ge \max\left\{1,\frac{81N_t\sigma^2}{\lambda_t^2\ln\frac{4N_t\tau}{\beta}}\right\},\quad \ln\frac{4N_t\tau}{\beta} \ge 2,
    \end{equation*}
    \begin{equation*}
        \gamma_t = \min \left\{\frac{\varepsilon_t^{\frac{1-\nu}{1+\nu}}}{8M_{\nu}^{\frac{2}{1+\nu}}}, \; \frac{R_0}{2^{\frac{t}{2}}\sqrt{2N_t}\varepsilon_t^{\frac{\nu}{1+\nu}}M_{\nu}^{\frac{1}{1+\nu}}}, \; \frac{R_0^{1-\nu}}{2^{1+\frac{(1-\nu)t}{2}}C^{\nu}M_{\nu}\ln\frac{4N_t\tau}{\beta}}\right\}.
    \end{equation*}
    Then \algname{R-clipped-SGD} achieves $f(\Bar{x}^\tau) - f(x^*) \le \varepsilon$ with probability at least $1-\beta$ after  
    \begin{equation}
        \cO\left(\max\left\{D_1^{\frac{2}{1+\nu}}\ln\frac{\mu R_0^2}{\varepsilon}, D_2^{\frac{2}{1+\nu}}, \max\left\{D_1\ln\frac{\mu R_0^2}{\varepsilon},D_2\right\}\ln\frac{D}{\beta} \right\}\right) \notag
    \end{equation}
    iterations of Algorithm~\ref{alg:clipped-SGD} in total and requires
    \begin{equation}
        \cO\left(\max\left\{D_1^{\frac{2}{1+\nu}}\ln\frac{\mu R_0^2}{\varepsilon}, D_2^{\frac{2}{1+\nu}}, \max\left\{D_1\ln\frac{\mu R_0^2}{\varepsilon},D_2, \frac{\sigma^2}{\mu\varepsilon}\right\}\ln\frac{D}{\beta} \right\}\right) \label{eq:clipped_SGD_oracle_complexity_str_cvx}
    \end{equation}
    oracle calls, where
    \begin{equation*}
        D_1 = \frac{M_\nu}{\mu R_0^{1-\nu}},\quad D_2 = \frac{M_\nu}{\mu^{\frac{1+\nu}{2}}\varepsilon^{\frac{1-\nu}{2}}},\quad D = D_2\ln\frac{\mu R_0^2}{\varepsilon}.
    \end{equation*}
\end{theorem}
\begin{proof}
    Applying Theorem~\ref{thm:main_result_clipped_SGD}, we obtain that with probability at least $1-\frac{\beta}{\tau}$ it holds that 
        $f(\hat{x}^1) - f(x^*) \le \frac{\mu R_0^2}{4}$.
    Since $f$ is $\mu$-strongly convex we have
        $\frac{\mu \|\hat{x}^1 - x^*\|_2^2}{2} \le f(\hat{x}^1) - f(x^*)$.
    Therefore, with probability at least $1-\frac{\beta}{\tau}$
    \begin{equation*}
        f(\hat{x}^1) - f(x^*) \le \frac{\mu R_0^2}{4},\quad \|\hat{x}^1 - x^*\|_2^2 \le \frac{R_0^2}{2}.
    \end{equation*}
    From mathematical induction and the union bound for probability events, it follows that inequalities
        $f(\hat{x}^t) - f(x^*) \le \frac{\mu R_0^2}{2^{t+1}},\quad \|\hat{x}^t - x^*\|_2^2 \le \frac{R_0^2}{2^t}$
    hold simultaneously for $t = 1,\ldots, \tau$ with probability at least $1 - \beta$. In particular, it means that after $\tau = \left\lceil\log_2\frac{\mu R_0^2}{\varepsilon} \right\rceil - 1$ restarts \algname{R-clipped-SGD} finds an $\varepsilon$-solution with probability at least $1-\beta$. The total number of iterations $\hat N$ is
    \begin{eqnarray*}
        &&\sum\limits_{t=1}^\tau N_t = \cO\left(\sum\limits_{t=1}^\tau \max\left\{\frac{M_\nu^{\frac{2}{1+\nu}}R_0^2}{2^t\varepsilon_t^{\frac{2}{1+\nu}}},\frac{M_\nu R_0^{1+\nu}}{2^{\frac{(1+\nu)t}{2}}\varepsilon_t}\ln\frac{M_\nu R_0^{1+\nu}\tau}{2^{\frac{(1+\nu)t}{2}}\varepsilon_t\beta}\right\}\right)\\
        &=& \cO\left(\sum\limits_{t=1}^\tau \max\left\{\frac{M_\nu^{\frac{2}{1+\nu}}\cdot 2^{\frac{(1-\nu)t}{1+\nu}}}{\mu^{\frac{2}{1+\nu}}R_0^{\frac{2(1-\nu)}{1+\nu}}}, \frac{M_\nu\cdot 2^{\frac{(1-\nu)t}{2}}}{\mu R_0^{1-\nu}}\ln\frac{M_\nu\cdot 2^{\frac{(1-\nu)\tau}{2}}\tau}{\mu R_0^{1-\nu}\beta}\right\}\right)\\
        &=& \cO\left(\max\left\{\frac{M_\nu^{\frac{2}{1+\nu}}}{\mu^{\frac{2}{1+\nu}}R_0^{\frac{2(1-\nu)}{1+\nu}}}, \frac{M_\nu}{\mu R_0^{1-\nu}}\ln\frac{M_\nu \ln\frac{\mu R_0^2}{\varepsilon}}{\mu^{\frac{1+\nu}{2}} \varepsilon^{\frac{1-\nu}{2}}\beta}\right\}\cdot \left(\frac{\mu R_0^2}{\varepsilon}\right)^{\frac{1-\nu}{1+\nu}}\right)\\
        &=& \cO\left(\max\left\{D_1^{\frac{2}{1+\nu}}\ln\frac{\mu R_0^2}{\varepsilon}, D_2^{\frac{2}{1+\nu}}, \max\left\{D_1\ln\frac{\mu R_0^2}{\varepsilon},D_2\right\}\ln\frac{D}{\beta} \right\}\right),
    \end{eqnarray*}
    where 
    \begin{equation*}
        D_1 = \frac{M_\nu}{\mu R_0^{1-\nu}},\quad D_2 = \frac{M_\nu}{\mu^{\frac{1+\nu}{2}}\varepsilon^{\frac{1-\nu}{2}}},\quad D = D_2\ln\frac{\mu R_0^2}{\varepsilon}.
    \end{equation*}
    Finally, the total number of oracle calls equals
    \begin{eqnarray*}
         \sum\limits_{t=1}^\tau\sum\limits_{k=0}^{N_t-1}m_k^t &=& \cO\left(\max\left\{\sum\limits_{t=1}^\tau N_t, \sum\limits_{t=1}^\tau \frac{\sigma^2 R_0^2}{2^t \varepsilon_t^2} \ln\frac{M_{\nu}R_0^{1+\nu}\tau}{2^{\frac{(1+\nu)t}{2}}\varepsilon_t\beta}\right\}\right)\\
         &=& \cO\left(\max\left\{\hat{N}, \sum\limits_{t=1}^\tau \frac{\sigma^2 \cdot 2^{t}}{\mu^2 R_0^2}\ln\frac{D}{\beta}\right\}\right) = \cO\left(\max\left\{\hat{N}, \frac{\sigma^2}{\mu \varepsilon}\ln\frac{D}{\beta}\right\}\right). \qed
    \end{eqnarray*}
\end{proof}
One can also derive a similar result for \algname{R-clipped-SGD} when stepsize $\gamma$ is chosen as in Corollary~\ref{cor:SGD_cvx_small_batch} for all restarts. In this case, one can choose unit batch sizes: $m_t = 1$ for all $t$.

\section{Numerical Experiments}\label{sec:experiments}

\revision{In this section, we present the results of our numerical experiments in synthetic and real-world data. We defer additional details regarding the choice of parameters to Appendix~\ref{sec:extra_experiments}.}

\subsection{Experiments on Synthetic Data}
First of all, we tested the considered methods on the following problem, which corresponds to the linear regression with the noise having generalized Gaussian distribution (Example 4.4 from \citep{Chaux_2007}):
\begin{equation}
    \min\limits_{x \in \R^n}\left\{f_p(x) = \frac{1}{m}\|\mA x - y\|_p^p =: \frac{1}{m}\sum\limits_{i=1}^m f_{i,p}(x)\right\},\;\;   f_{i,p}(x) = |a_i^\top x - y_i|^{p}, \label{eq:generalized_lin_reg}
\end{equation}
where $\mA \in \R^{m\times n}$, $y \in \R^m$, $p \in [1,2]$, and $a_i^\top$ denotes the $i$-th row of matrix $\mA$. One can show that $f_p(x)$ is convex and has $(\nu, M_\nu)$-H\"older continuous (sub)gradient\footnote{\revision{For $p \in (1,2]$ function $f_{i,p}(x)$ is differentiable and $\nabla f_{i,p}(x) = p|a_i^\top x - y_i|^{p-1} \mathrm{sign}(a_i^\top x - y_i)a_i $ and for $p = 1$ it has subdifferential $\partial f_{i,p}(x) = \begin{cases}
    a_i,& \text{if } a_i^\top x - y_i > 0,\\ [-a_i, a_i],&\text{if } a_i^\top x - y_i = 0,\\ -a_i,& \text{if } a_i^\top x - y_i < 0.
\end{cases}$}} with $\nu = p-1$ and $M_\nu = \tfrac{2^{1-\nu}(1+\nu)}{m}\sum_{i=1}^m\|a_i\|_2^{1+\nu}$. Moreover, to rewrite the considered problem in the form \eqref{eq:main_problem}, we define $\xi$ as a random index having a uniform distribution on $\{1,\ldots,m\}$. Since the (sub)gradient of $f_i$ is bounded on any compact set and any $i \in \{1, \ldots, m\}$, the variance of the stochastic gradient can be uniformly upper bounded on any compact set as well. That is, problem \eqref{eq:generalized_lin_reg} fits the setup we consider in the theoretical analysis in the previous sections.

\begin{figure}[h]
  \centering
  \includegraphics[scale=0.27]{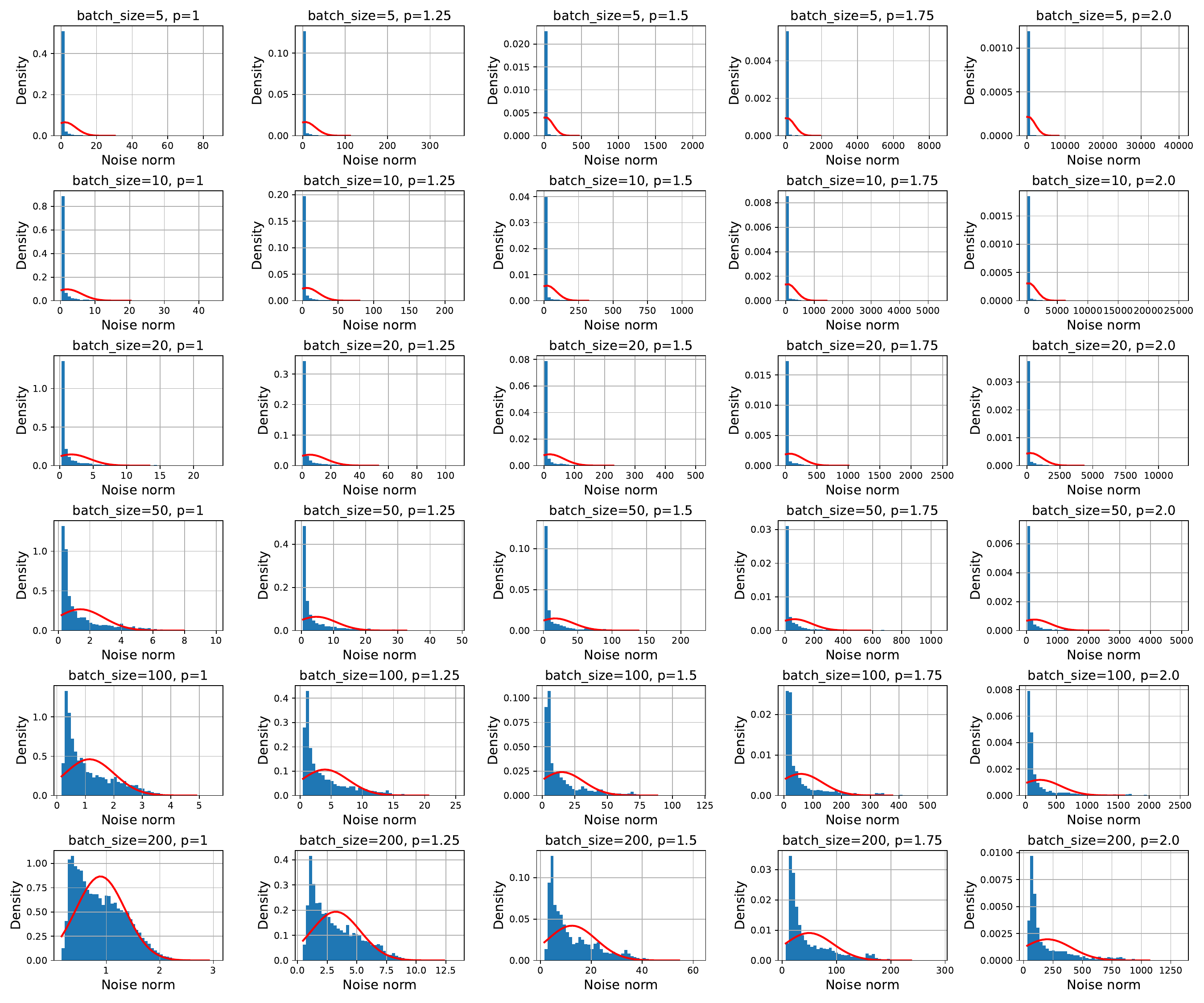}
  \caption{Noise distribution of the stochastic gradients for synthetic dataset, depending on batch size and $p$ of the loss function (\ref{eq:generalized_lin_reg}). \revision{Red lines: Gaussian probability density functions with means and variances empirically estimated by the samples.} The total number of batches for each graph is $5\cdot10^5$.}
  \label{figure:norm_diffs_distribution_synth}
\end{figure}

We generate matrix $\mA$ as follows: 1) \revision{assemble the} matrix $\revision{\mA_1} \in \R^{m\times n}$ from $mn$ i.i.d.\ samples from the standard Gaussian distribution $\cN(0,1)$, 2) multiply the rows of matrix $\revision{\mA_1}$ on i.i.d.\ samples from the Levy distribution with parameters $0$ and $0.5$ that are smaller than the threshold $t = 10^4$ (we redraw a sample if it is larger than $t$ to avoid numerical instabilities during the experiments)\revision{, denote the result by $\revision{\mA_2}$}, 3) divide the columns of \revision{$\mA_2$} by their empirical standard deviations (again due to numerical instabilities)\revision{, denote the result by $\mA$}, 4) split the dataset equally into the train and test sets and add i.i.d.\ samples from the Levy distribution with parameters $0$ and $0.5$ to the train part (we redraw a sample if it is larger than $t\cdot \alpha$ with $\alpha = 10$). Next, we generate $x = x_{\text{true}}$ as a random vector from the uniform distribution on the unit Euclidean sphere and set $y := y_{\text{true}} = \mA x_{\text{true}}$. We use $m = 10000$, $n = 200$ \revision{($5000$ for the train set and $5000$ for the test set)}. The starting point for the methods was generated from the uniform distribution on the Euclidean sphere with radius $10$. We also use $x_{\text{pred}}$ to denote the output of a method and $y_{\text{pred}} = \mA x_{\text{pred}}$ to denote the ``answer'' of the trained model.

The resulting problem has a heavy-tailed stochastic gradient noise. To illustrate this, for different values of $p$, we sample a large enough number of batched stochastic gradients $\nabla f_p(x^0,\Bxi_1),\ldots, \nabla f_p(x^0,\Bxi_K)$ with the batch size we use to run the methods and plot the histograms for $\|\nabla f_p(x^0,\Bxi_1) - \nabla f_p(x^0)\|_2,\ldots, \|\nabla f_p(x^0,\Bxi_K) - \nabla f_p(x^0)\|_2$, see Figure~\ref{figure:norm_diffs_distribution_synth}.

% We generate matrix $\mA$ using one of two options: we sample each entry of the matrix $\mA$ either from the standard normal distribution (Option I) or from the Levy stable distribution with parameters $0$, $\tfrac{1}{2}$ (Option II). Clearly, in the second case, the distribution of the stochastic gradients has much heavier tails. To illustrate this, for different values of $p$ we sample large enough number of batched stochastic gradients $\nabla f(x^0,\Bxi_1),\ldots, \nabla f(x^0,\Bxi_K)$ with the batch size we use to run the methods and plot the histograms for $\|\nabla f(x^0,\Bxi_1) - \nabla f(x^0)\|_2,\ldots, \|\nabla f(x^0,\Bxi_K) - \nabla f(x^0)\|_2$, see Figure~\ref{figure:norm_diffs_distribution_synth}. For details on how we generate dataset $(\mA,y)$ see Appendix~\ref{sec:extra_experiments_synth_dataset}.

We compared 4 different methods on this problem with different $p$: \algname{Adam}, \algname{SGD}, \algname{clipped-SGD}, and \algname{clipped-SSTM}. The results w.r.t. the best relative loss achieved on the training dataset are reported in Figure~\ref{figure:results_best_train_synth}. In all our experiments, \algname{clipped-SSTM} performs significantly better than other tested methods for all values of $p$. We also observe that for $p < 1.5$ \algname{SGD} has a comparable or even faster convergence than \algname{clipped-SGD}, while for larger values of $p$ \algname{SGD} is much slower than \algname{clipped-SGD}. Taking into account the noise distributions reported in Figure~\ref{figure:norm_diffs_distribution_synth}, this behavior is expected since the stochastic gradient noise in the considered problem has heavier tails due to the specifics of the dataset generation. We also notice that, in this series of experiments, \algname{Adam} is never faster than \algname{clipped-SSTM} and, moreover, for $p \geq 1.5$ \algname{Adam} converges slower than \algname{clipped-SGD}. Additionally, we compared these methods w.r.t.\ the number of epochs needed to achieve a $2.0$ relative loss on the train, the results are reported in Appendix~\ref{sec:extra_experiments_synth_x2_train_speed}.

\subsection{Neural Networks Training}

In our experiments with the training of neural networks, we tested the performance of the methods on the following non-convex non-smooth problems\footnote{We conduct these experiments to illustrate that \algname{clipped-SSTM} and \algname{clipped-SGD} might be useful even for the problems that are not theoretically studied in this paper. Since \citep{gorbunov2020clipped_sstm} does not provide numerical experiments with \algname{clipped-SSTM} on the training of neural networks, our experiments are the first ones showing the behavior of \algname{clipped-SSTM} on the considered tasks.} (in both tasks, we use standard cross-entropy loss functions):
\begin{itemize}
    \item {\tt BERT} fine-tuning on {\tt CoLA} dataset \citep{warstadt2018neural}. We use pretrained {\tt BERT} from Transformers library \citep{wolf-etal-2020-transformers} ({\tt bert-base-uncased}) and freeze all layers except the last two linear ones.
    \item {\tt ResNet-18} training on {\tt ImageNet-100} (first 100 classes of {\tt ImageNet} \citep{ILSVRC15}).
\end{itemize}

First, we study the noise distribution for both problem\revision{s} as follows: at the starting point we sample large enough number of batched stochastic gradients $\nabla f(x^0,\Bxi_1),\ldots, \nabla f(x^0,\Bxi_K)$ with batch size $32$ and plot the histograms for $\|\nabla f(x^0,\Bxi_1) - \nabla f(x^0)\|_2,\ldots, \|\nabla f(x^0,\Bxi_K) - \nabla f(x^0)\|_2$, see Figure~\ref{figure:norm_diffs_distribution}. As one can see, the noise distribution for {\tt BERT} + {\tt CoLA} is substantially non-sub-Gaussian, whereas the distribution for {\tt ResNet-18} + {\tt Imagenet-100} is almost Gaussian. We observe a similar phenomenon for other points along the trajectories of the methods; see Appendix~\ref{sec:extra_experiments_noise_distr_evolution}.

\begin{figure}[h]
  \centering
  \includegraphics[scale=0.26]{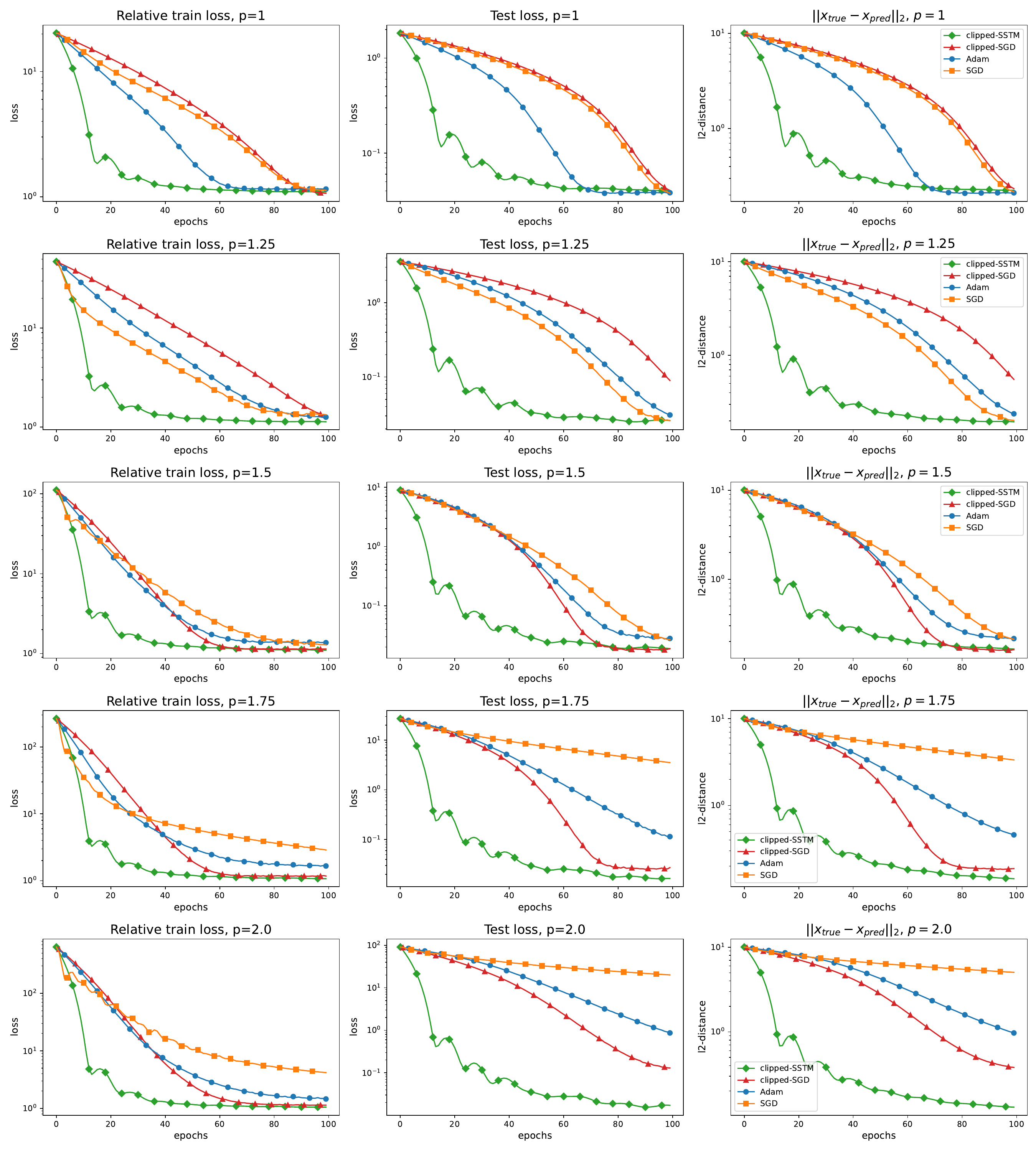}
  \caption{Results obtained for different $p$ by the best relative train loss achieved. To calculate relative loss, we use $f_p(x_{\text{pred}})/f_p(x_{\text{true}})$, where $f_p(x_{true})$ is non-zero because of the noise added to the train part of the dataset.}
  \label{figure:results_best_train_synth}
\end{figure}

\begin{figure}[h]
  \centering
  \includegraphics[scale=0.385]{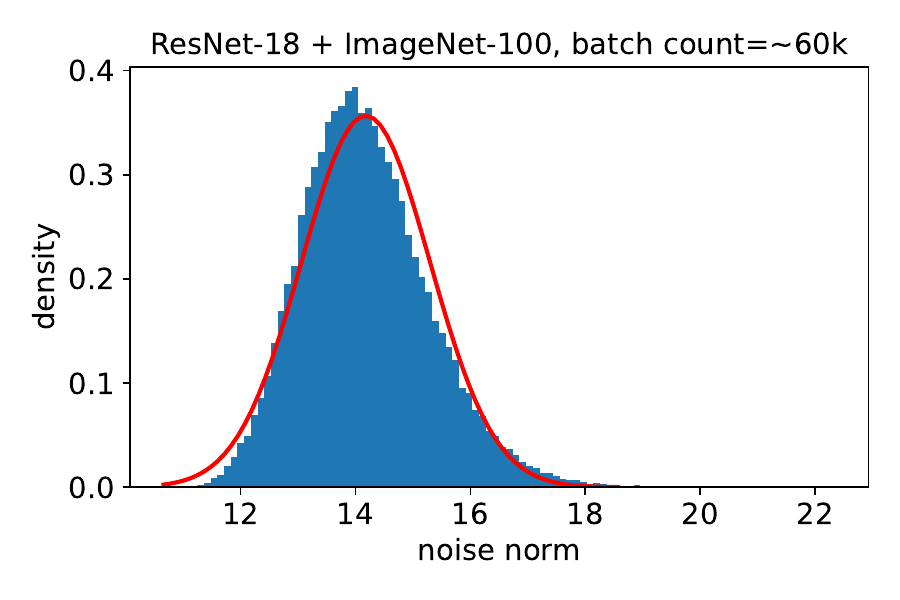}
  \includegraphics[scale=0.385]{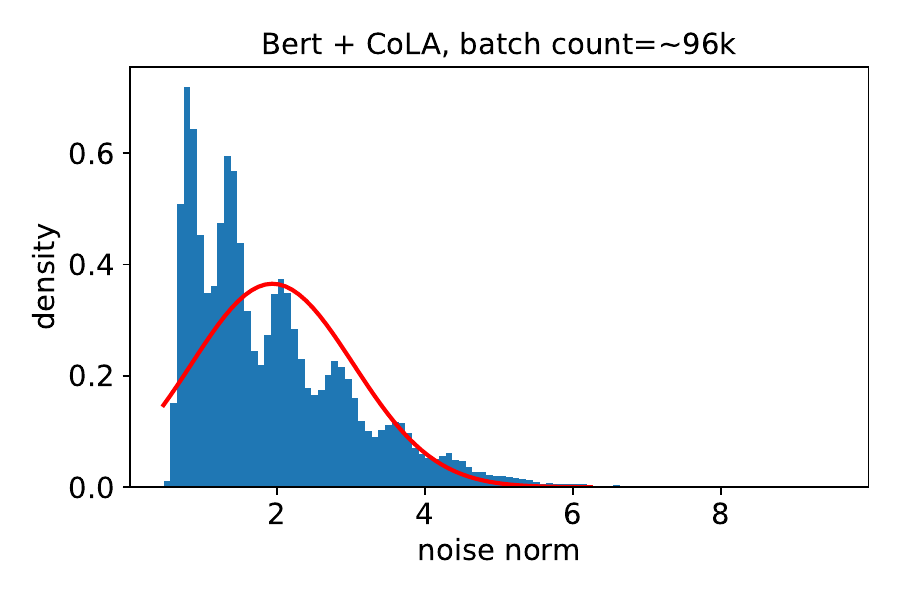}
  \caption{Noise distribution of the stochastic gradients for {\tt ResNet-18} on {\tt ImageNet-100} and {\tt BERT} fine-tuning on the {\tt CoLA} dataset before the training. Red lines: Gaussian probability density functions with means and variances empirically estimated by the samples. Batch count is the total number of samples used to build a histogram.}
  \label{figure:norm_diffs_distribution}
\end{figure}

%\vspace{-1cm}
Next, we compared \revision{four} different optimizers on these problems: \algname{Adam}, \algname{SGD} (with Momentum), \algname{clipped-SGD} (with Momentum and coordinate-wise\footnote{\revision{Following standard practice in the usage of clipping, we use coordinate-wise clipping in \algname{clipped-SGD} \cite{zhang2020why}. In the preliminary experiments, we also tried norm-clipping for \algname{clipped-SGD}, but it showed worse results than the coordinate-wise one. Our analysis can be generalized to the case of coordinate-wise clipping if we assume the boundedness of the coordinate-wise variance $\sigma_{c}^2$ of the stochastic gradients. Then, the result of Lemma~\ref{lem:main_stoch_lemma_clipped_SSTM} will hold with $\sigma^2 = n\sigma_c^2$, and the norm of the clipped vector will be bounded by $\sqrt{n}\lambda$. These changes will lead to the explicit dependence on the dimension in the complexity bounds, similarly to \cite{zhang2020why}.}} clipping) and \algname{clipped-SSTM} (with norm-clipping and $\nu=1$). The results are presented in Figure~\ref{figure:losses_graph_bert_resnet}. We observed that the noise distributions do not change significantly along the trajectories of the considered methods, see Appendix~\ref{sec:extra_experiments}. During the hyper-parameters search, we compared different batch sizes emulated via gradient accumulation (thus, we compare methods with different batch sizes by the number of base batches used). The base batch size was $32$ for both problems; stepsizes and clipping levels were tuned. One can find additional details regarding our experiments in Appendix~\ref{sec:extra_experiments}.

\begin{figure}[h]
  \centering
  \includegraphics[scale=0.33]{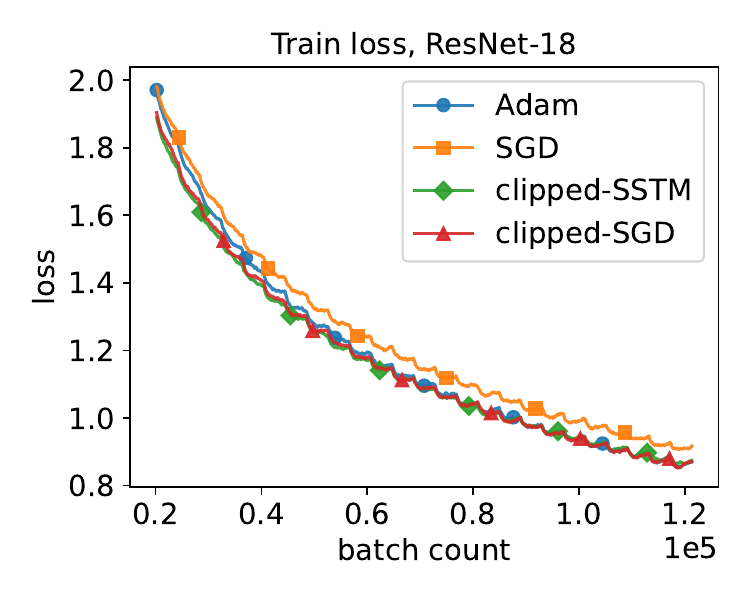}
  \includegraphics[scale=0.33]{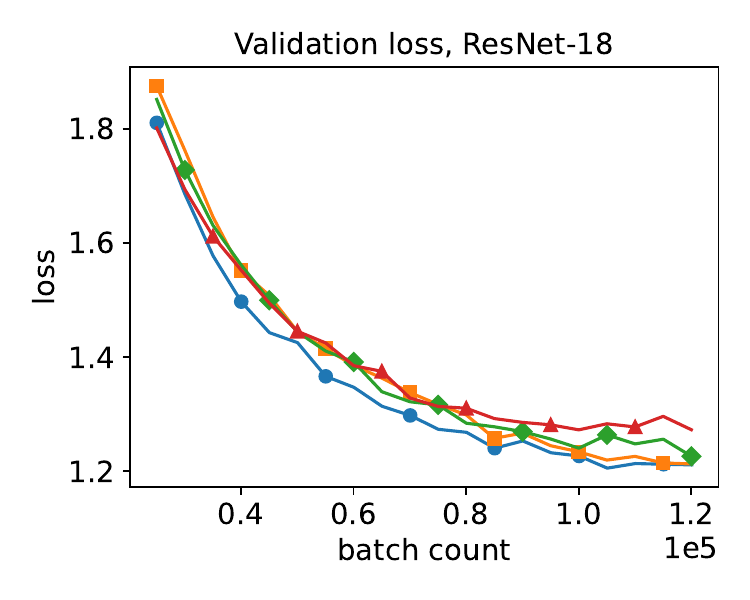}
  \includegraphics[scale=0.33]{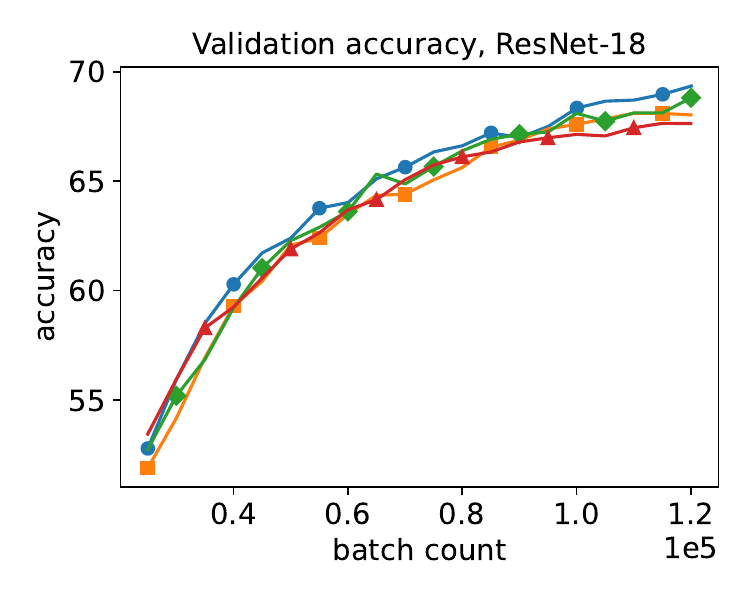}
  \includegraphics[scale=0.33]{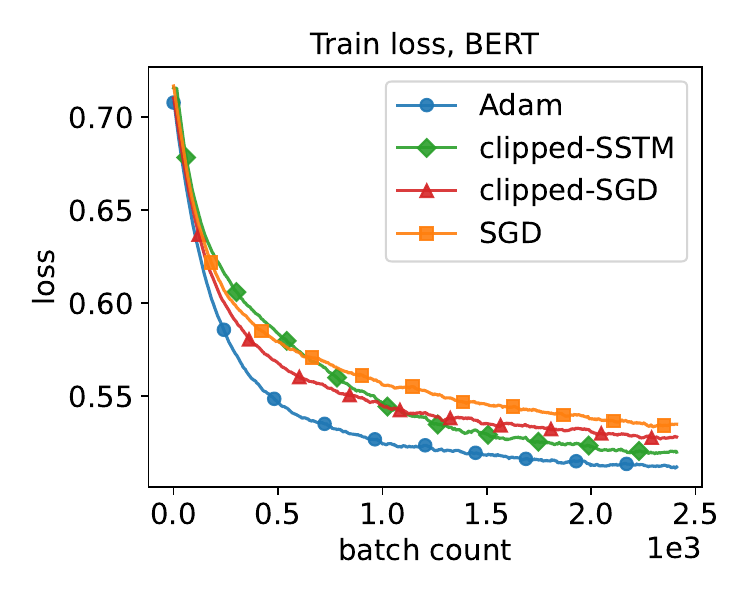}
  \includegraphics[scale=0.33]{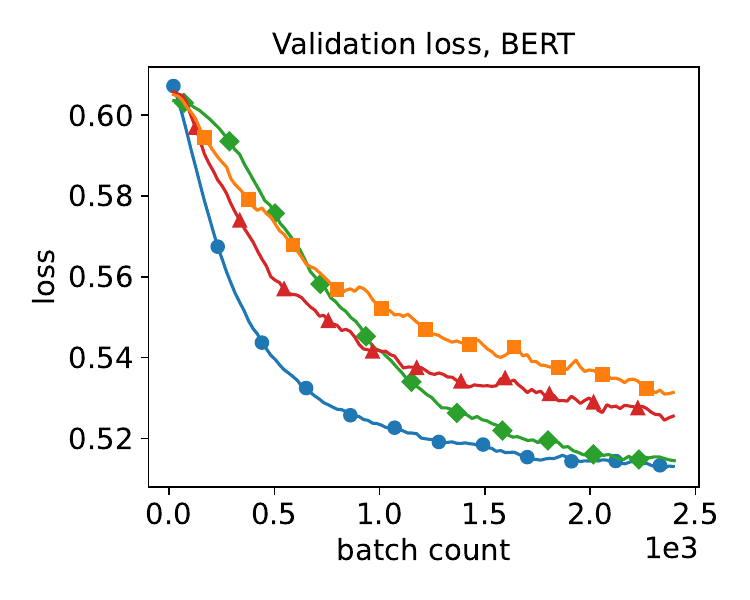}
  \includegraphics[scale=0.33]{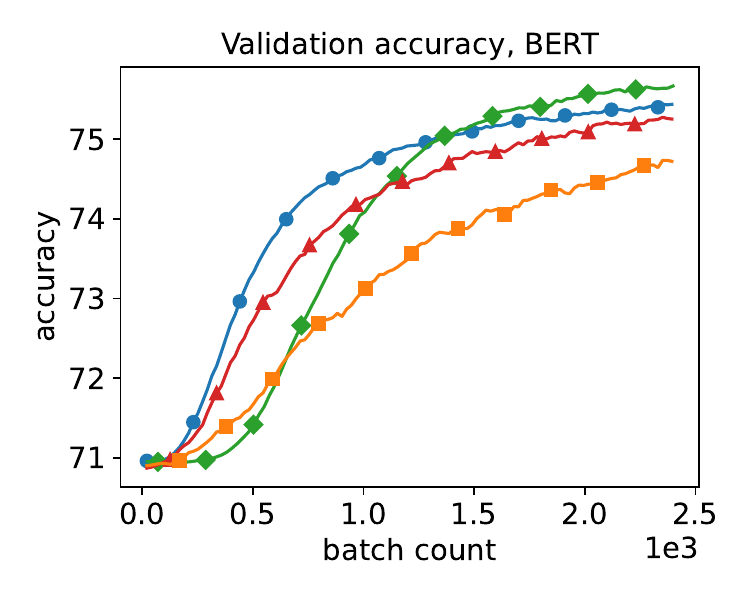}
  \caption{Train and validation loss + accuracy for different optimizers on both problems. Here, ``batch count'' denotes the total number of used stochastic gradients.}
  \label{figure:losses_graph_bert_resnet}
\end{figure}

% \vspace{-0.5cm}
\paragraph{Image classification.} On {\tt ResNet-18} + {\tt ImageNet-100} task, \algname{SGD} performs relatively well, and even ties with \algname{Adam} (with batch size of $4\times 32$) in validation loss. \algname{clipped-SSTM} (with batch size of $2\times 32$) also ties with \algname{Adam} and \algname{clipped-SGD} is not far from them. The results were averaged from 5 different launches (with different starting points/weight initializations). Since the noise distribution is almost Gaussian, even vanilla \algname{SGD} performs well, i.e., gradient clipping is not required. At the same time, the clipping does not slow down the convergence significantly.

\paragraph{Text classification.} On {\tt BERT} + {\tt CoLA} task, when the noise distribution is heavy-tailed, the methods with clipping outperform \algname{SGD} by a large margin. This result is in good correspondence with the derived high-probability complexity bounds for \algname{clipped-SGD}, \algname{clipped-SSTM}, and the best-known ones for \algname{SGD}. Moreover, \algname{clipped-SSTM} (with batch size of $8\times 32$) achieves the same loss on validation as \algname{Adam}, and has better accuracy. These results were averaged from $5$ different train-val splits and $20$ launches (with different starting points/weight initializations) for each of the splits, $100$ launches in total. We provide additional experiments with different NLP tasks in Appendix~\ref{appendix:additional_NLP}.

% \section{Conclusions}
% This section is a summary of the major results of the paper, without formulas.

\section*{Acknowledgements}
This work was supported by a grant for research centers in the field of artificial intelligence, provided by the Analytical Center for the Government of the Russian Federation in accordance with the subsidy agreement (agreement identifier 000000D730324P540002) and the agreement with the Moscow Institute of Physics and Technology dated November 1, 2021 No. 70-2021-00138.

\paragraph{Data availability statement} The codes for the conducted numerical experiments are publicly available:\\ \url{https://github.com/ClippedStochasticMethods/clipped-SSTM}.

\appendix  %This command ends the counting of sections.
\section{Basic Facts, Technical Lemmas, and Auxiliary Results}\label{sec:basic_facts}
\subsection{Useful Inequalities}
% \paragraph{Notation and missing definitions.} We use standard notation for stochastic optimization. For all $x\in\R^n$ we use $\|x\|_2 = \sqrt{\la x,x\ra}$ to denote standard Euclidean norm, where $\la x,y \ra = x_1y_1 + x_2y_2 + \ldots + x_ny_n$, $x = (x_1,\ldots,x_n)^\top, x = (x_1,\ldots,x_n)^\top \in \R^n$. Next, we use $\EE[\xi]$ and $\EE[\xi \mid \eta]$ to denote expectation of $\xi$ and expectation of $\xi$ conditioned on $\eta$ respectively. In some places of the paper, we also use $\EE_\xi[\cdot]$ to denote conditional expectation taken w.r.t.\ the randomness coming from $\xi$. The probability of event $E$ is defined as $\PP\{E\}$.

% Finally, we use a standard definition of differentiable strongly convex function.
% \begin{definition}\label{eq:str_cvx_def}
%     Differentiable function $f:Q\subseteq \R^n \to \R$ is called $\mu$-strongly convex for some $\mu \geq 0$ if for all $x,y \in Q$
%     \begin{equation*}
%         f(y) \ge f(x) + \langle\nabla f(x), y-x \rangle + \frac{\mu}{2}\|y-x\|_2^2.
%     \end{equation*}
%     When $\mu = 0$ function $f$ is called convex.
% \end{definition}

For all $a,b\in\R^n$ 
%and $\lambda > 0$
% \begin{equation}
%     |\la a, b\ra| \le \frac{\|a\|_2^2}{2\lambda} + \frac{\lambda\|b\|_2^2}{2},\label{eq:fenchel_young_inequality}
% \end{equation}
\begin{equation}
    \|a+b\|_2^2 \le 2\|a\|_2^2 + 2\|b\|_2^2,\label{eq:squared_norm_sum}
\end{equation}
\begin{equation}
    \la a, b\ra = \frac{1}{2}\left(\|a+b\|_2^2 - \|a\|_2^2 - \|b\|_2^2\right). \label{eq:inner_product_representation}
\end{equation}

\subsection{Auxiliary Lemmas}
The following lemma is a standard result about functions with $(\nu, M_\nu)$-H\"older continuous gradient \citep{devolder2014first,nesterov2015universal}. 

\begin{lemma}
    Let $f$ has $(\nu, M_\nu)$-H\"older continuous gradient on $Q\subseteq \R^n$. Then for all $x,y\in Q$ and for all $\delta > 0$
\begin{equation}
    f(y) \le f(x) + \la\nabla f(x), y-x \ra + \frac{M_\nu}{1+\nu} \|x-y\|_2^{1+\nu}, \label{eq:holder_cor1}
\end{equation}
\begin{equation}
    f(y) \le f(x) + \la\nabla f(x), y-x \ra + \frac{L(\delta,\nu)}{2} \|x-y\|_2^{2} + \frac{\delta}{2}, \quad  L(\delta,\nu) = \left(\frac{1}{\delta}\right)^{\frac{1-\nu}{1+\nu}}M_\nu^{\frac{2}{1+\nu}}. \label{eq:holder_cor2}
\end{equation}
\end{lemma}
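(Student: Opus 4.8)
The plan is to establish the two descent-type inequalities in sequence, deriving \eqref{eq:holder_cor2} from \eqref{eq:holder_cor1} by an elementary Young-type argument. First I would prove \eqref{eq:holder_cor1} using the fundamental theorem of calculus along the segment joining $x$ and $y$. Writing $f(y) - f(x) = \int_0^1 \la \nabla f(x + t(y-x)), y-x\ra \, dt$ and subtracting the first-order term gives
\[
f(y) - f(x) - \la \nabla f(x), y-x\ra = \int_0^1 \la \nabla f(x+t(y-x)) - \nabla f(x),\, y-x\ra \, dt .
\]
Applying Cauchy--Schwarz to the integrand, bounding $\|\nabla f(x+t(y-x)) - \nabla f(x)\|_2 \le M_\nu\, (t\|y-x\|_2)^\nu$ via the H\"older assumption \eqref{eq:holder_def}, and computing $\int_0^1 t^\nu\, dt = \tfrac{1}{1+\nu}$ yields exactly $\tfrac{M_\nu}{1+\nu}\|x-y\|_2^{1+\nu}$. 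Here I would note that the whole segment $[x,y]$ must lie in $Q$ for the H\"older bound to be applicable along it, which holds whenever $Q$ is convex, as in the paper's usage (balls centered at $x^*$).

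Next, I would deduce \eqref{eq:holder_cor2} from \eqref{eq:holder_cor1} purely by a scaled Young's inequality, since the two right-hand sides differ only in how $\|x-y\|_2^{1+\nu}$ is majorized. Setting $r = \|x-y\|_2$, it suffices to show $\tfrac{M_\nu}{1+\nu} r^{1+\nu} \le \tfrac{L(\delta,\nu)}{2} r^2 + \tfrac{\delta}{2}$ for all $r \ge 0$. I would invoke Young's inequality with conjugate exponents $p = \tfrac{2}{1+\nu}$, $q = \tfrac{2}{1-\nu}$, namely $ab \le \tfrac{1+\nu}{2}\,a^{2/(1+\nu)} + \tfrac{1-\nu}{2}\,b^{2/(1-\nu)}$, applied to $a = \tau r^{1+\nu}$, $b = \tau^{-1}$ with a free scaling parameter $\tau > 0$, obtaining $r^{1+\nu} \le \tfrac{1+\nu}{2}\tau^{2/(1+\nu)} r^2 + \tfrac{1-\nu}{2}\tau^{-2/(1-\nu)}$. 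Choosing $\tau = (M_\nu/\delta)^{(1-\nu)/2}$ makes the coefficient of $r^2$ equal to $\tfrac{1}{2}L(\delta,\nu)$ with $L(\delta,\nu)=\delta^{-(1-\nu)/(1+\nu)} M_\nu^{2/(1+\nu)}$ after multiplying through by $\tfrac{M_\nu}{1+\nu}$, while the residual constant evaluates to $\tfrac{(1-\nu)}{2(1+\nu)}\delta \le \tfrac{\delta}{2}$ since $\nu \in [0,1]$.

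The only delicate point is matching the precise constant in $L(\delta,\nu)$ and confirming the additive term is genuinely bounded by $\tfrac{\delta}{2}$ rather than a larger multiple of $\delta$; this is exactly what pins down the value of $\tau$, and the verification reduces to the elementary estimate $\tfrac{1-\nu}{1+\nu}\le 1$. A minor separate check is the degenerate endpoint $\nu = 1$, where the conjugate exponent $q = \tfrac{2}{1-\nu}$ blows up: there \eqref{eq:holder_cor1} is already the standard descent lemma with $L = M_1$, and \eqref{eq:holder_cor2} holds with $L(\delta,1) = M_1$ and no $\delta$ term needed, so it is handled directly. Otherwise the argument is a routine combination of the integral mean-value representation and Young's inequality, and I do not anticipate any substantial obstacle.
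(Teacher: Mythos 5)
Your proposal is correct. Note that the paper does not prove this lemma at all: it is imported verbatim from the cited references \cite{devolder2014first,nesterov2015universal}, so there is no in-paper proof to compare against. Your argument is essentially the standard one from those references: the integral representation plus Cauchy--Schwarz and the H\"older bound gives \eqref{eq:holder_cor1} (your caveat that the segment $[x,y]$ must lie in $Q$, i.e.\ that $Q$ be convex, is the same implicit assumption made in the references and is satisfied in all of the paper's applications, where $Q$ is a ball). For \eqref{eq:holder_cor2}, your scaled Young's inequality with $\tau = (M_\nu/\delta)^{(1-\nu)/2}$ checks out: the coefficient of $r^2$ is exactly $\tfrac{1}{2}L(\delta,\nu)$ and the residual is $\tfrac{1-\nu}{2(1+\nu)}\delta \le \tfrac{\delta}{2}$. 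The only cosmetic difference from the cited source is that Nesterov obtains the admissible smoothness constant by maximizing the gap $r \mapsto \tfrac{M_\nu}{1+\nu}r^{1+\nu} - \tfrac{L}{2}r^2$, which yields the slightly sharper threshold $\bigl(\tfrac{1-\nu}{1+\nu}\cdot\tfrac{1}{\delta}\bigr)^{\frac{1-\nu}{1+\nu}}M_\nu^{\frac{2}{1+\nu}}$, of which the paper's $L(\delta,\nu)$ is an upper bound; your Young-inequality route produces the paper's constant directly and handles the degenerate endpoint $\nu=1$ correctly as a separate trivial case.
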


The next result is known as Bernstein inequality for martingale differences \citep{bennett1962probability,dzhaparidze2001bernstein,freedman1975tail}.
\begin{lemma}\label{lem:Bernstein_ineq}
    Let the sequence of random variables $\{X_i\}_{i\ge 1}$ form a martingale difference sequence, i.e.\ $\EE\left[X_i\mid X_{i-1},\ldots, X_1\right] = 0$ for all $i \ge 1$. Assume that conditional variances $\sigma_i^2\eqdef\EE\left[X_i^2\mid X_{i-1},\ldots, X_1\right]$ exist and are bounded and also assume that there exists deterministic constant $c>0$ such that $\|X_i\|_2 \le c$ almost surely for all $i\ge 1$. Then for all $b > 0$, $F > 0$ and $n\ge 1$
    \begin{equation}
        \PP\left\{\Big|\sum\limits_{i=1}^nX_i\Big| > b \text{ and } \sum\limits_{i=1}^n\sigma_i^2 \le F\right\} \le 2\exp\left(-\frac{b^2}{2F + \nicefrac{2cb}{3}}\right).
    \end{equation}
\end{lemma}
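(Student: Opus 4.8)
The plan is to use the classical exponential-supermartingale (Freedman-type) argument, adapted to the fact that the variance proxy $\sum_{i=1}^n \sigma_i^2$ appearing inside the event is itself random rather than a fixed constant. Write $S_n = \sum_{i=1}^n X_i$ and $V_n = \sum_{i=1}^n \sigma_i^2$, and let $\mathcal{F}_{i-1}$ denote the $\sigma$-algebra generated by $X_1,\ldots,X_{i-1}$. I will first establish a per-step conditional exponential-moment bound, then assemble a supermartingale whose exponent already subtracts off the random quadratic variation, and finally apply Markov's inequality on the event $\{V_n \le F\}$ followed by a Bernstein-type optimization over the free parameter $\lambda$.

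First, the bounded-difference exponential-moment estimate. Using that $\varphi(u) = (e^u - 1 - u)/u^2$ is nondecreasing and that $X_i \le c$ almost surely, I would bound $e^{\lambda X_i} \le 1 + \lambda X_i + \varphi(\lambda c)\lambda^2 X_i^2$ pointwise for every $\lambda > 0$. Taking conditional expectation and using $\EE[X_i \mid \mathcal{F}_{i-1}] = 0$ together with $\EE[X_i^2 \mid \mathcal{F}_{i-1}] = \sigma_i^2$ gives
\[
\EE\bigl[e^{\lambda X_i}\mid \mathcal{F}_{i-1}\bigr] \le 1 + \psi(\lambda)\sigma_i^2 \le \exp\bigl(\psi(\lambda)\sigma_i^2\bigr), \qquad \psi(\lambda) = \frac{e^{\lambda c} - 1 - \lambda c}{c^2}.
\]

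Second, the supermartingale. Define $Y_n = \exp(\lambda S_n - \psi(\lambda)V_n)$ with $Y_0 = 1$. Because the increment $V_n - V_{n-1} = \sigma_n^2$ is $\mathcal{F}_{n-1}$-measurable, the previous bound yields $\EE[Y_n \mid \mathcal{F}_{n-1}] = Y_{n-1}\exp(-\psi(\lambda)\sigma_n^2)\,\EE[e^{\lambda X_n}\mid \mathcal{F}_{n-1}] \le Y_{n-1}$, so $\{Y_n\}$ is a supermartingale and $\EE[Y_n] \le 1$. This is the crucial point: subtracting $\psi(\lambda)V_n$ inside the exponential is exactly what lets us control the randomness of the quadratic variation, which a plain Markov bound on $\EE[e^{\lambda S_n}]$ could not. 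On the event $\{S_n > b\}\cap\{V_n\le F\}$ one has $Y_n > \exp(\lambda b - \psi(\lambda)F)$ since $\psi(\lambda) \ge 0$, whence Markov's inequality gives $\PP\{S_n > b,\ V_n \le F\} \le \exp(\psi(\lambda)F - \lambda b)$.

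Third, optimize and symmetrize. I would bound $\psi(\lambda) F = \tfrac{F}{c^2}(e^{\lambda c} - 1 - \lambda c) \le \tfrac{F\lambda^2}{2(1-\lambda c/3)}$ using $e^u - 1 - u \le \tfrac{u^2}{2(1-u/3)}$ for $0 \le u < 3$, and then choose $\lambda = b/(F + cb/3)$, which automatically satisfies $\lambda c < 3$ because $F > 0$. Substituting collapses the exponent to $-b^2/(2F + 2cb/3)$, giving the one-sided bound $\PP\{S_n > b,\ V_n \le F\} \le \exp(-b^2/(2F+2cb/3))$. Applying the identical argument to the martingale difference sequence $\{-X_i\}_{i\ge 1}$, which has the same conditional variances and the same almost-sure bound, controls $\PP\{S_n < -b,\ V_n \le F\}$, and a union bound produces the stated factor of $2$. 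I expect the main obstacle to be conceptual rather than computational: recognizing that the random variance cap forces the Freedman-type construction with $V_n$ absorbed into the exponent, instead of an ordinary Chernoff bound with a deterministic variance. The remaining ingredients — the monotonicity of $\varphi$ yielding the per-step estimate and the Bernstein choice of $\lambda$ — are routine.
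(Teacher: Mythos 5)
Your proposal is correct and complete: the per-step bound via monotonicity of $(e^u-1-u)/u^2$, the Freedman-type supermartingale $\exp(\lambda S_n - \psi(\lambda)V_n)$ that absorbs the random quadratic variation, the Bernstein choice $\lambda = b/(F + cb/3)$ (for which $\lambda c < 3$ indeed holds automatically when $F>0$), and the symmetrization via $\{-X_i\}$ all check out and reproduce exactly the stated exponent $-b^2/(2F + 2cb/3)$ with the factor $2$. The paper offers no proof of this lemma — it imports it from the cited references (Freedman; Dzhaparidze and van Zanten) — and your argument is precisely the classical exponential-supermartingale proof given there, so it matches the intended derivation.
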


\subsection{Technical Lemmas}

\begin{lemma}\label{lem:alpha_k_A_K_lemma}
    Let sequences $\{\alpha_k\}_{k\ge0}$ and $\{A_k\}_{k\ge 0}$ satisfy
    \begin{equation}
        \alpha_{0} = A_0 = 0,\quad \alpha_{k+1} = \frac{(k+1)^{\frac{2\nu}{1+\nu}}(\nicefrac{\varepsilon}{2})^{\frac{1-\nu}{1+\nu}}}{2^{\frac{2\nu}{1+\nu}}aM_\nu^{\frac{2}{1+\nu}}}, \quad  A_{k+1} = A_k + \alpha_{k+1},\quad a,\varepsilon, M_{\nu} > 0,\; \nu \in [0,1] \label{eq:alpha_k_A_k_def}
    \end{equation}
    for all $k\ge 0$. Then for all $k\ge 0$ we have
    \begin{equation}
        A_{k} \ge a L_{k} \alpha_{k}^2,\quad A_{k} \ge  \frac{k^{\frac{1+3\nu}{1+\nu}}(\nicefrac{\varepsilon}{2})^{\frac{1-\nu}{1+\nu}}}{2^{\frac{1+3\nu}{1+\nu}}aM_\nu^{\frac{2}{1+\nu}}}, \label{eq:A_k_lower_bound}
    \end{equation}
    where $L_0 = 0$ and for $k > 0$
    \begin{equation}
        L_{k} = \left(\frac{2A_{k}}{\alpha_{k}\varepsilon}\right)^{\frac{1-\nu}{1+\nu}} M_\nu^{\frac{2}{1+\nu}}. \label{eq:L_k_definition}
    \end{equation}
    Moreover, for all $k \ge 0$
    \begin{equation}
        A_k \le \frac{k^{\frac{1+3\nu}{1+\nu}}(\nicefrac{\varepsilon}{2})^{\frac{1-\nu}{1+\nu}}}{2^{\frac{2\nu}{1+\nu}}aM_\nu^{\frac{2}{1+\nu}}}. \label{eq:A_k_upper_bound}
    \end{equation}
\end{lemma}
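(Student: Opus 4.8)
The plan is to strip the definition down to a pure power sum and then read off all three claims from elementary bounds on $\sum_{j=1}^k j^p$. First I would introduce the shorthand $p = \frac{2\nu}{1+\nu}$ and $q = \frac{1-\nu}{1+\nu}$ and record the two identities that drive everything: $p+q = 1$ and $p+1 = \frac{1+3\nu}{1+\nu}$. With these, the definition \eqref{eq:alpha_k_A_k_def} collapses to $\alpha_k = \frac{\varepsilon^{q}}{2aM_\nu^{\frac{2}{1+\nu}}}\,k^{p}$ for $k\ge 1$ (using $(\varepsilon/2)^{q} = \varepsilon^{q}2^{-q}$ together with $2^{-q}2^{-p} = 2^{-(p+q)} = \tfrac12$), so that $A_k = \frac{\varepsilon^{q}}{2aM_\nu^{\frac{2}{1+\nu}}}\sum_{j=1}^{k} j^{p}$.

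For the two explicit bounds on $A_k$, I would use that $x\mapsto x^{p}$ is increasing on $[0,\infty)$ with $p+1\le 2$. The integral comparison gives $\sum_{j=1}^{k} j^{p} \ge \int_0^{k} x^{p}\,dx = \frac{k^{p+1}}{p+1} \ge \frac{k^{p+1}}{2}$, while $j\le k$ yields the trivial upper bound $\sum_{j=1}^{k} j^{p} \le k\cdot k^{p} = k^{p+1}$. Multiplying by $\frac{\varepsilon^{q}}{2aM_\nu^{\frac{2}{1+\nu}}}$ and rewriting $\frac{\varepsilon^{q}}{4} = \frac{(\varepsilon/2)^{q}}{2^{p+1}}$ (again using $p+q=1$) produces exactly the second inequality in \eqref{eq:A_k_lower_bound} and the upper bound \eqref{eq:A_k_upper_bound}.

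For the first inequality in \eqref{eq:A_k_lower_bound}, I would substitute the definition \eqref{eq:L_k_definition} of $L_k$ and divide by $A_k^{q}$: the claim $A_k \ge aL_k\alpha_k^2$ is equivalent to $aM_\nu^{\frac{2}{1+\nu}}(2/\varepsilon)^{q}\alpha_k^{2-q} \le A_k^{1-q}$, and the identities $1-q=p$, $2-q=p+1$ turn this into $aM_\nu^{\frac{2}{1+\nu}}(2/\varepsilon)^{q}\alpha_k^{p+1} \le A_k^{p}$. Plugging in the explicit $\alpha_k$ and simplifying the powers of $a$, $M_\nu$, $\varepsilon$ and $2$ with $p+q=1$ collapses the left-hand side to exactly $\left(\frac{\varepsilon^{q}k^{p+1}}{4aM_\nu^{\frac{2}{1+\nu}}}\right)^{p}$, which is the $p$-th power of the lower bound already established, hence $\le A_k^{p}$. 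The case $k=0$ is immediate since $A_0=0$ and $L_0=0$.

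The computation is entirely elementary, so I expect no genuine obstacle beyond bookkeeping; the one point needing care is tracking the exponents in the reduction of $A_k \ge aL_k\alpha_k^2$ and recognizing that the three auxiliary exponents all collapse through the single relation $p+q=1$. It is precisely this identity that makes the first lower bound hold exactly, i.e.\ the reduced inequality becomes an equality against the already-proved bound on $A_k$ rather than merely holding up to a constant factor.
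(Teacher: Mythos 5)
Your proof is correct, and it takes a genuinely different route from the paper's. The paper proves both the lower bound $A_k \ge \frac{k^{(1+3\nu)/(1+\nu)}(\nicefrac{\varepsilon}{2})^{(1-\nu)/(1+\nu)}}{2^{(1+3\nu)/(1+\nu)}aM_\nu^{2/(1+\nu)}}$ and the upper bound \eqref{eq:A_k_upper_bound} by induction on $k$, which requires checking monotonicity facts such as $\frac{K}{K-1} \ge \left(\frac{K+1}{K}\right)^{2-\frac{2}{1+\nu}}$; you instead write $A_k$ as the explicit power sum $\frac{\varepsilon^{q}}{2aM_\nu^{2/(1+\nu)}}\sum_{j=1}^{k}j^{p}$ and read both bounds off the integral comparison $\frac{k^{p+1}}{p+1}\le\sum_{j=1}^k j^p\le k^{p+1}$ together with $p+1\le 2$, avoiding induction entirely. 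Your treatment of the first inequality $A_k \ge aL_k\alpha_k^2$ is in substance the same as the paper's: the paper observes (for $\nu>0$) that it is equivalent to the second lower bound after substituting \eqref{eq:L_k_definition} and \eqref{eq:alpha_k_A_k_def}, and your exponent bookkeeping with $1-q=p$, $2-q=p+1$ performs exactly that reduction, including the exact-equality phenomenon at $\nu=0$ that the paper notes separately. What your route buys is transparency and generality: the constants ($2^{p+1}$ versus $2^{p}$ in the denominators) appear naturally from $p+1\le 2$ rather than from induction bookkeeping, and the argument works verbatim for any exponent $p\ge 0$ with constant $p+1$ in place of $2$; what the paper's induction buys is that it never needs the closed-form resummation and so stays entirely within the recursive definition used elsewhere in the analysis. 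All your identities check out ($p+q=1$, $p+1=\frac{1+3\nu}{1+3\nu}\cdot\frac{1+3\nu}{1+\nu}$, the collapse of $aM_\nu^{2/(1+\nu)}(2/\varepsilon)^q\alpha_k^{p+1}$ to $\bigl(\frac{\varepsilon^q k^{p+1}}{4aM_\nu^{2/(1+\nu)}}\bigr)^p$), and the $k=0$ and $\nu=0$ edge cases are handled; the only point worth making explicit in a final write-up is that raising the established bound to the power $p$ uses monotonicity of $t\mapsto t^{p}$ on $t>0$, which is where $p\ge 0$ enters.
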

\begin{proof}
    We start with deriving the second inequality from \eqref{eq:A_k_lower_bound}. The proof goes by induction. For $k = 0$, the inequality holds. Next, we assume that it holds for all $k \le K$. Then,
    \begin{equation*}
        A_{K+1} = A_{K} + \alpha_{K+1} \ge \frac{K^{\frac{1+3\nu}{1+\nu}}(\nicefrac{\varepsilon}{2})^{\frac{1-\nu}{1+\nu}}}{2^{\frac{1+3\nu}{1+\nu}}aM_\nu^{\frac{2}{1+\nu}}} + \frac{(K+1)^{\frac{2\nu}{1+\nu}}(\nicefrac{\varepsilon}{2})^{\frac{1-\nu}{1+\nu}}}{2^{\frac{2\nu}{1+\nu}}aM_\nu^{\frac{2}{1+\nu}}}.
    \end{equation*}
    Let us estimate the right-hand side of the previous inequality. We want to show that
    \begin{eqnarray*}
        \frac{K^{\frac{1+3\nu}{1+\nu}}(\nicefrac{\varepsilon}{2})^{\frac{1-\nu}{1+\nu}}}{2^{\frac{1+3\nu}{1+\nu}}aM_\nu^{\frac{2}{1+\nu}}} + \frac{(K+1)^{\frac{2\nu}{1+\nu}}(\nicefrac{\varepsilon}{2})^{\frac{1-\nu}{1+\nu}}}{2^{\frac{2\nu}{1+\nu}}aM_\nu^{\frac{2}{1+\nu}}} &\ge& \frac{(K+1)^{\frac{1+3\nu}{1+\nu}}(\nicefrac{\varepsilon}{2})^{\frac{1-\nu}{1+\nu}}}{2^{\frac{1+3\nu}{1+\nu}}aM_\nu^{\frac{2}{1+\nu}}}
    \end{eqnarray*}
    that is equivalent to the inequality:
    \begin{equation*}
        \frac{K^{\frac{1+3\nu}{1+\nu}}}{2} + (K+1)^{\frac{2\nu}{1+\nu}} \ge \frac{(K+1)^{\frac{1+3\nu}{1+\nu}}}{2} \Longleftrightarrow \frac{K^{\frac{1+3\nu}{1+\nu}}}{2} \ge \frac{(K+1)^{\frac{2\nu}{1+\nu}}(K-1)}{2}.
    \end{equation*}
    If $K = 1$, it trivially holds. If $K > 1$, it is equivalent to
    \begin{equation*}
        \frac{K}{K-1} \ge \left(\frac{K+1}{K}\right)^{2 - \frac{2}{1+\nu}}.
    \end{equation*}
    Since $2 - \frac{2}{1+\nu}$ is monotonically increasing function for $\nu\in[0,1]$ we have that
    \begin{equation*}
        \left(\frac{K+1}{K}\right)^{2 - \frac{2}{1+\nu}} \le \frac{K+1}{K} \le \frac{K}{K-1}.
    \end{equation*}
    That is, the second inequality in \eqref{eq:A_k_lower_bound} holds for $k = K+1$, and, as a consequence, it holds for all $k \ge 0$. Next, we derive the first part of \eqref{eq:A_k_lower_bound}. For $k = 0$, it trivially holds. For $k > 0$ we consider cases $\nu = 0$ and $\nu > 0$ separately. When $\nu = 0$ the inequality is equivalent to
    \begin{equation*}
        1 \ge \frac{2a\alpha_k M_0^2}{\varepsilon}, \text{ where } \frac{2a\alpha_k M_0^2}{\varepsilon} \overset{\eqref{eq:alpha_k_A_k_def}}{=} 1,
    \end{equation*}
    i.e., we have $A_k = aL_k\alpha_k^2$ for all $k\ge 0$. When $\nu > 0$ the first inequality in \eqref{eq:A_k_lower_bound} is equivalent to
    \begin{equation*}
        A_{k} \ge a^{\frac{1+\nu}{2\nu}}\alpha_{k}^{\frac{1+3\nu}{2\nu}}(\nicefrac{\varepsilon}{2})^{-\frac{1-\nu}{2\nu}}M_\nu^{\frac{1}{\nu}} \overset{\eqref{eq:alpha_k_A_k_def}}{\Longleftrightarrow} A_{k} \ge \frac{k^{\frac{1+3\nu}{1+\nu}}(\nicefrac{\varepsilon}{2})^{\frac{1-\nu}{1+\nu}}}{2^{\frac{1+3\nu}{1+\nu}}aM_\nu^{\frac{2}{1+\nu}}},
    \end{equation*}
    where the last inequality coincides with the second inequality from \eqref{eq:A_k_lower_bound} that we derived earlier in the proof.
    
    To finish the proof, it remains to derive \eqref{eq:A_k_upper_bound}. Again, the proof goes by induction. For $k=0$ inequality \eqref{eq:A_k_upper_bound} is trivial. Next, we assume that it holds for all $k \le K$. Then,
    \begin{equation*}
        A_{K+1} = A_{K} + \alpha_{K+1} \le \frac{K^{\frac{1+3\nu}{1+\nu}}(\nicefrac{\varepsilon}{2})^{\frac{1-\nu}{1+\nu}}}{2^{\frac{2\nu}{1+\nu}}aM_\nu^{\frac{2}{1+\nu}}} + \frac{(K+1)^{\frac{2\nu}{1+\nu}}(\nicefrac{\varepsilon}{2})^{\frac{1-\nu}{1+\nu}}}{2^{\frac{2\nu}{1+\nu}}aM_\nu^{\frac{2}{1+\nu}}}.
    \end{equation*}
    Let us estimate the right-hand side of the previous inequality. We want to show that
    \begin{eqnarray*}
        \frac{K^{\frac{1+3\nu}{1+\nu}}(\nicefrac{\varepsilon}{2})^{\frac{1-\nu}{1+\nu}}}{2^{\frac{2\nu}{1+\nu}}aM_\nu^{\frac{2}{1+\nu}}} + \frac{(K+1)^{\frac{2\nu}{1+\nu}}(\nicefrac{\varepsilon}{2})^{\frac{1-\nu}{1+\nu}}}{2^{\frac{2\nu}{1+\nu}}aM_\nu^{\frac{2}{1+\nu}}} &\le& \frac{(K+1)^{\frac{1+3\nu}{1+\nu}}(\nicefrac{\varepsilon}{2})^{\frac{1-\nu}{1+\nu}}}{2^{\frac{2\nu}{1+\nu}}aM_\nu^{\frac{2}{1+\nu}}}
    \end{eqnarray*}
    that is equivalent to the inequality:
    \begin{equation*}
        K^{\frac{1+3\nu}{1+\nu}} + (K+1)^{\frac{2\nu}{1+\nu}} \le (K+1)^{\frac{1+3\nu}{1+\nu}}.
    \end{equation*}
    This inequality holds due to
    \begin{equation*}
        K^{\frac{1+3\nu}{1+\nu}} \le (K+1)^{\frac{2\nu}{1+\nu}}K.
    \end{equation*}
    That is, \eqref{eq:A_k_upper_bound} holds for $k = K+1$, and, as a consequence, it holds for all $k \ge 0$. \qed
\end{proof}

\begin{lemma}\label{lem:gradient_bound}
    Let $f$ have H\"older continuous gradients on \revision{$\R^n$} for some $\nu \in [0,1]$ with constant $M_\nu > 0$, be convex and $x^*$ be  some minimum of $f(x)$ on $\R^n$. Then, for all \revision{$x\in \R^n$}
    \begin{equation}
        \|\nabla f(x)\|_2 \le \left(\frac{1+\nu}{\nu}\right)^{\frac{\nu}{1+\nu}}M_\nu^{\frac{1}{1+\nu}} \left(f(x) - f(x^*)\right)^{\frac{\nu}{1+\nu}}, \label{eq:holder_gradient_bound_appendix}
    \end{equation}
    where for $\nu = 0$ we use $\left[\left(\frac{1+\nu}{\nu}\right)^{\frac{\nu}{1+\nu}}\right]_{\nu=0} := \lim_{\nu\to 0}\left(\frac{1+\nu}{\nu}\right)^{\frac{\nu}{1+\nu}} = 1$.
\end{lemma}
\begin{proof}
    For $\nu = 0$ inequality \eqref{eq:holder_gradient_bound_appendix} follows from \eqref{eq:holder_def} and\footnote{When $f$ is not differentiable, we use subgradients. In this case, $0$ belongs to the subdifferential of $f$ at the point $x^*$, and we take it as $\nabla f(x^*)$.} $\nabla f(x^*) = 0$. When $\nu > 0$ for arbitrary point \revision{$x\in \R^n$} we consider the point $y = x - \alpha\nabla f(x)$, where $\alpha = \left(\frac{\|\nabla f(x)\|_2^{1-\nu}}{M_\nu}\right)^{\frac{1}{\nu}}$.
    % Since $x^* \in Q$ and $f$ is convex, one can easily show that $y \in Q$. 
    For the pair of points $x,y$ we apply \eqref{eq:holder_cor1} and get
    \begin{eqnarray*}
        f(y) &\le& f(x) + \langle\nabla f(x), y-x\rangle + \frac{M_\nu}{1+\nu}\|x-y\|_2^{1+\nu}\\
        &=& f(x) - \alpha\|\nabla f(x)\|^2 + \frac{\alpha^{\nu+1}M_\nu}{1+\nu}\|\nabla f(x)\|_2^{1+\nu}\\
        &=& f(x) - \frac{\|\nabla f(x)\|_2^{\frac{1+\nu}{\nu}}}{M_\nu^{\frac{1}{\nu}}} + \frac{\|\nabla f(x)\|_2^{\frac{1+\nu}{\nu}}}{(1+\nu)M_\nu^{\frac{1}{\nu}}} = f(x) - \frac{\nu\|\nabla f(x)\|_2^{\frac{1+\nu}{\nu}}}{(1+\nu)M_\nu^{\frac{1}{\nu}}}
    \end{eqnarray*}
    implying
    \begin{equation*}
         \|\nabla f(x)\|_2 \le \left(\frac{1+\nu}{\nu}\right)^{\frac{\nu}{1+\nu}}M_\nu^{\frac{1}{1+\nu}} \left(f(x) - f(y)\right)^{\frac{\nu}{1+\nu}} \le \left(\frac{1+\nu}{\nu}\right)^{\frac{\nu}{1+\nu}}M_\nu^{\frac{1}{1+\nu}} \left(f(x) - f(x^*)\right)^{\frac{\nu}{1+\nu}}. \qed
    \end{equation*}
\end{proof}

\begin{lemma}\label{lem:gradient_bound_2}
    Let $f$ have H\"older continuous gradients on \revision{$\R^n$} for some $\nu \in [0,1]$ with constant $M_\nu > 0$, be convex and $x^*$ be  some  minimum of $f(x)$ on $\R^n$. Then, for all $x\in\R^n$  and all $\delta >0$,
    \begin{equation}
        \|\nabla f(x)\|_2^2 \le
        2\left(\frac{1}{\delta}\right)^{\frac{1-\nu}{1+\nu}}M_{\nu}^{\frac{2}{1+\nu}}\left(f(x)-f(x^*)\right) + \delta^{\frac{2\nu}{1+\nu}} M_{\nu}^{\frac{2}{1+\nu}} .\label{eq:holder_gradient_bound_2_appendix}
    \end{equation}
\end{lemma}
\begin{proof}
    For a given $\delta > 0$ we consider an arbitrary point $x\in Q$ and $y = x - \frac{1}{L(\delta,\nu)}\nabla f(x)$, where $L(\delta,\nu) = \left(\frac{1}{\delta}\right)^{\frac{1-\nu}{1+\nu}}M_\nu^{\frac{2}{1+\nu}}$. 
    % Since $x^* \in Q$ and $f$ is convex, one can easily show that $y \in Q$. 
    For the pair of points $x,y$ we apply \eqref{eq:holder_cor2} and get
    \begin{eqnarray*}
        f(y) &\le& f(x) + \la\nabla f(x), y-x \ra + \frac{L(\delta,\nu)}{2} \|x-y\|_2^{2} + \frac{\delta}{2}\\
        &=& f(x) - \frac{1}{2L(\delta,\nu)}\|\revision{\nabla f(x)}\|_2^2 + \frac{\delta}{2}
    \end{eqnarray*}
    implying
    \begin{eqnarray*}
         \|\nabla f(x)\|_2^2 &\le& 2L(\delta,\nu)\left(f(x) - f(y)\right) + \delta L(\delta, \nu)\\
         &\le& 2\left(\frac{1}{\delta}\right)^{\frac{1-\nu}{1+\nu}}M_{\nu}^{\frac{2}{1+\nu}}\left(f(x)-f(x^*)\right) + \delta^{\frac{2\nu}{1+\nu}} M_{\nu}^{\frac{2}{1+\nu}}. \qed
    \end{eqnarray*}
\end{proof}

% \clearpage

\section{Additional Experimental Details and Results}\label{sec:extra_experiments}

\subsection{Experiments on Synthetic Data}

% \subsubsection{Dataset}\label{sec:extra_experiments_synth_dataset}
% To generate a dataset with heavy-tailed distribution of gradient noise for chosen loss function we consider a simple $Ax_{true} = y_{true}$ linear relation. To make it heavy-tailed, we 
% \begin{itemize}
%     \item Generate $A$ ($m\times n$ array) from $\mathcal{N}(0,1)$
%     \item Multiply $A$ rows on random values from $Levy(0, 0.5)$ distribution (we actually use clipped version, where we re-draw variables when they go above threshold $t$ to avoid numerical instability)
%     \item Divide $A$ by column-wise std to normalize it, and further lower the risk of instabilities
%     \item Split dataset equally into train/test part, and add the same $t$-thresholded $Levy(0, 0.5)$ noise multiplied by $\alpha$ to train
% \end{itemize}

% We used $m = 10000$, $n = 200$, $t = 10^4$ and $\alpha = 10$, and $x_{true}$ was drawn from the uniform distribution on a 1-sphere.

% Then we consider a model $Ax_{pred} = y_{pred}$ where we draw initial $w_{pred}$ from the uniform distribution on a 10-sphere.

\subsubsection{Hyper-Parameters tuning}

We grid-searched hyper-parameters for each model. Commonly for all models we considered batch sizes of $\revision{\{5, 10, 20, 50, 100, 200\}}$ and stepsizes \revision{$lr\in[1\mathrm{e}{-1},1\mathrm{e}{-5}]$}. As to model-specific parameters:
\begin{itemize}
    \item for \algname{Adam} we grid-search over $betas\in(\{0.8, 0.9, 0.95, 0.99\}, \{0.9, 0.99, 0.999\})$,
    \item for \algname{SGD} ~--- over \revision{$momentum\in\{0.8, 0.9, 0.99, 0.999\}$},
    \item for \algname{clipped-SSTM} ~--- over clipping parameter $B\in \revision{\{1\mathrm{e}{-0},1\mathrm{e}{-1}, 1\mathrm{e}{-2}, 1\mathrm{e}{-3}\}}$,
    \item for \algname{clipped-SGD} ~--- over \revision{$momentum\in\{0.8, 0.9, 0.99, 0.999\}$} and clipping parameter \revision{$B\in \revision{\{1\mathrm{e}{-0},1\mathrm{e}{-1}, 1\mathrm{e}{-2}, 1\mathrm{e}{-3}\}}$}.
\end{itemize}

For \algname{clipped-SSTM} we additionally use $\nu=1$ and norm clipping (we did not gridsearch over it extensively; however, in our experiments on real data, these parameters were the best). For \algname{clipped-SGD} we use coordinate-wise clipping.

For  \algname{Adam}, \algname{clipped-SSTM} and \algname{clipped-SGD} the best parameters for each $p$ were approximately the same:
\begin{itemize}
    \item \algname{Adam}: $lr=1\mathrm{e}{-3}$, $betas=(0.9, 0.9)$ and batch size of $10$
    \item \algname{clipped-SSTM}: $lr=1\mathrm{e}{-3}$, $\nu = 1$, $B=1\mathrm{e}{-2}$, norm clipping and a batch size of $5$
    \item \algname{clipped-SGD}: $lr=1\mathrm{e}{-3}$ and $B=1\mathrm{e}{-1}$ or $lr=1\mathrm{e}{-2}$ and $B=1\mathrm{e}{-2}$, $momentum=0.8$, coordinate-wise clipping and a batch size of $5$
\end{itemize}

\subsubsection{Comparison w.r.t. certain relative train loss level} \label{sec:extra_experiments_synth_x2_train_speed}

In Figure~\ref{figure:results_best_train_synth}, we reported the performance of the methods in terms of the best models w.r.t.\ train loss achieved. However, it is also interesting to compare the methods w.r.t.\ the speed they achieve a certain ($2.0$) level of relative loss on train ($f_p(x_{\text{pred}})/f_p(x_{\text{true}})$). This is a valid metric, since $f_p(x_{\text{true}})$ is non-zero, after adding noise to the train part of the dataset, and $x_{\text{true}}$ is still a good approximation of the optimal solution. The results are represented in Figure~\ref{figure:results_best_by_x2_train_loss_synth}. As in the previous set of experiments, one can see that \algname{clipped-SSTM} outperforms other algorithms and achieves this $2.0$ level of relative loss much faster, though later \revision{loses} to \algname{Adam}/\algname{clipped-SGD}.

\begin{figure}[h]
  \centering
  \includegraphics[scale=0.26]{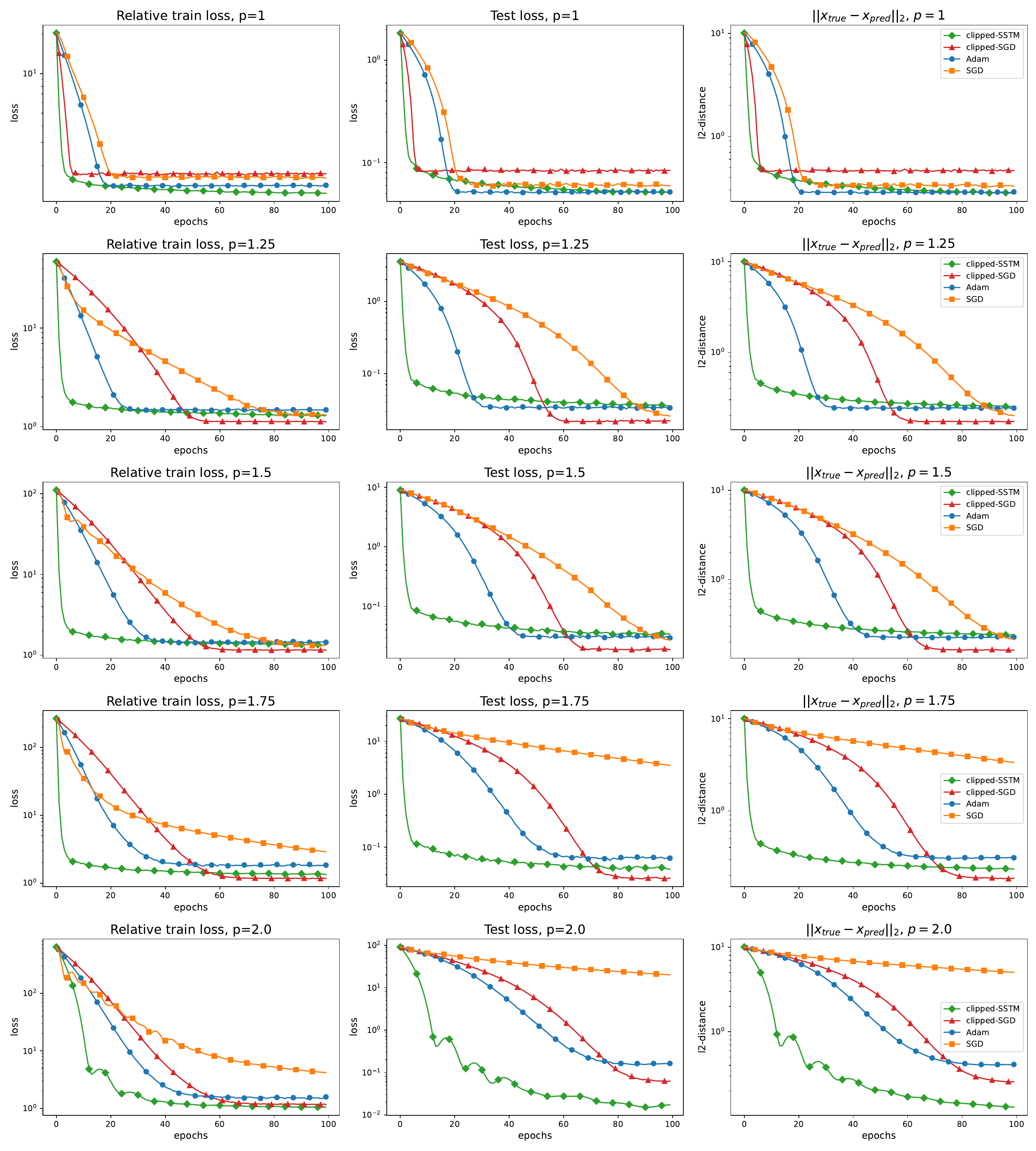}
  \caption{Results obtained for different $p$ by the lowest epoch when model achieved $\times 2$ from loss in $x_{\text{true}}$}
  \label{figure:results_best_by_x2_train_loss_synth}
\end{figure}

\subsection{Neural Networks Training}

\subsubsection{Hyper-Parameters}

In our experiments with the training of neural networks, we use standard implementations of \algname{Adam} and \algname{SGD} from PyTorch \cite{NEURIPS2019_9015}; we write only the parameters we changed from the default.

To conduct these experiments, we used Nvidia RTX 2070s. The longest experiment (evolution of the noise distribution for image classification task) took 53 hours (we iterated several times over the train dataset to build a better histogram; see Appendix~\ref{sec:extra_experiments_noise_distr_evolution}). 
% Our code is available at \url{https://github.com/ClippedStochasticMethods/clipped-SSTM}.

\paragraph{Image Classification.} For {\tt ResNet-18} + {\tt ImageNet-100} the parameters of the methods were chosen as follows:
\begin{itemize}
    \item \algname{Adam}: $lr=1e-3$ and a batch size of $4\times 32$
    \item \algname{SGD}: $lr=1e-2$, $momentum=0.9$ and a batch size of $32$
    \item \algname{clipped-SGD}: $lr=5e-2$, $momentum=0.9$, coordinate-wise clipping with clipping parameter $B=0.1$ and a batch size of $32$
    \item \algname{clipped-SSTM}: $\nu = 1$, stepsize parameter $\alpha = 1e-3$ (in code we use separately $lr=1e-2$ and $L=10$ and $\alpha = \frac{lr}{L}$), norm clipping with clipping parameter $B=1$ and a batch size of $2\times 32$. We also upper bounded the ratio $\nicefrac{A_k}{A_{k+1}}$ by $0.99$ (see $a\_k\_ratio\_upper\_bound$ parameter in code)
\end{itemize}

The main two parameters that we grid-searched were $lr$ and batch size. For both of them, we used a logarithmic grid (i.e., for $lr$, we used $1e-5,2e-5,5e-5,1e-4,\ldots,1e-2,2e-2,5e-2$ for \algname{Adam}). Batchsize was chosen from $32, 2\cdot 32, 4\cdot 32$, and $8\cdot 32$. For \algname{SGD}, we also tried various momentum parameters. 

For \algname{clipped-SSTM} and \algname{clipped-SGD}, we used clipping levels of $1$ and $0.1$, respectively. Too small a choice of the clipping level, e.g. $0.01$, slows down the convergence significantly.

Another important parameter for \algname{clipped-SSTM} here was $a\_k\_ratio\_upper\_bound$ -- we used it to upper bound the maximum ratio of $\nicefrac{A_k}{A_{k+1}}$. Without this modification, the method is \revision{too} conservative. e.g.,  after $10^4$ steps $\nicefrac{A_k}{A_{k+1}}\approx 0.9999$. Effectively, it can be seen as a momentum parameter of \algname{SGD}.

\paragraph{Text Classification, CoLA.} For {\tt BERT} + {\tt CoLA} the parameters of the methods were chosen as follows:
\begin{itemize}
    \item \algname{Adam}: $lr=5e-5$, $weight\_decay=5e-4$ and a batch size of $32$
    \item \algname{SGD}: $lr=1e-3$, $momentum=0.9$ and a batch size of $32$
    \item \algname{clipped-SSTM}: $\nu = 1$, stepsize parameter $\alpha = 8e-3$, norm clipping with clipping parameter $B=1$ and a batch size of $8\times 32$
    \item \algname{clipped-SGD}: $lr=2e-3$, $momentum=0.9$, coordinate-wise clipping with clipping parameter $B=0.1$ and a batch size of $32$
\end{itemize}

There, we used the same grid as in the previous task. The main difference here is that we didn't bound \algname{clipped-SSTM} $A_k/A_{k+1}$ ratio -- there are only $\approx 300$ steps of the method (because the batch size is $8\cdot 32$). \revision{Thus, the} method is still not too conservative.

\subsubsection{On the Relation Between Stepsize Parameter $\alpha$ and Batchsize}

In our experiments, we noticed that \algname{clipped-SSTM} shows similar results when the \revision{ratio} $\nicefrac{bs^2}{\alpha}$ is kept unchanged, where $bs$ is batch size (see Figure~\ref{figure:losses_graph_bert_alpha-bs_relation}). We compare the performance of \algname{clipped-SSTM} with $4$ different choices of $\alpha$ and the batch size. 

\begin{figure}[h]
  \centering
  \includegraphics[scale=0.3]{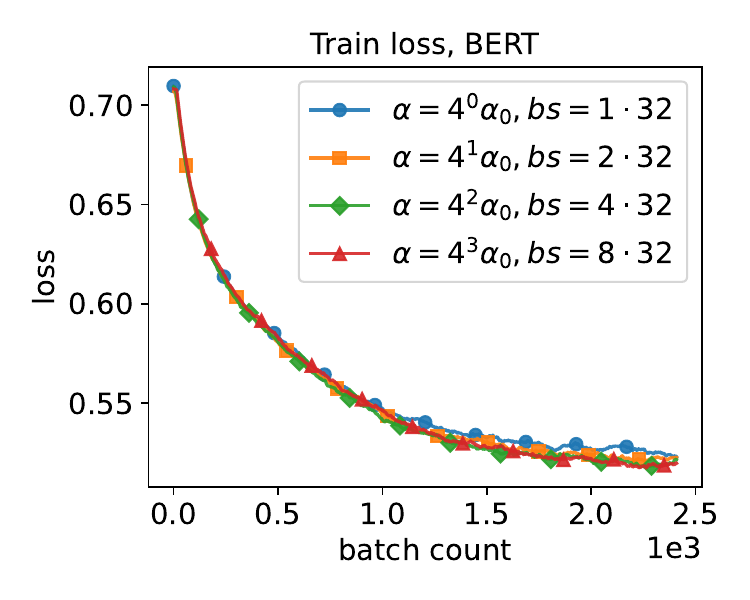}
  \includegraphics[scale=0.3]{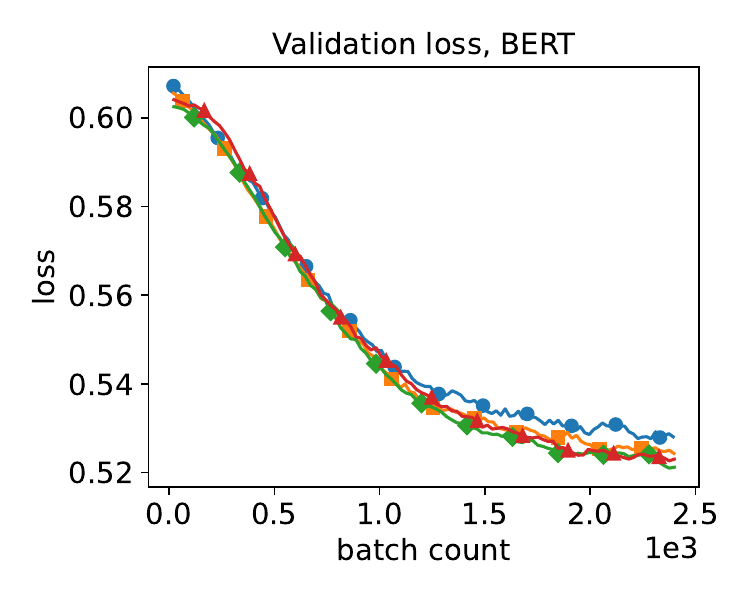}
  \includegraphics[scale=0.3]{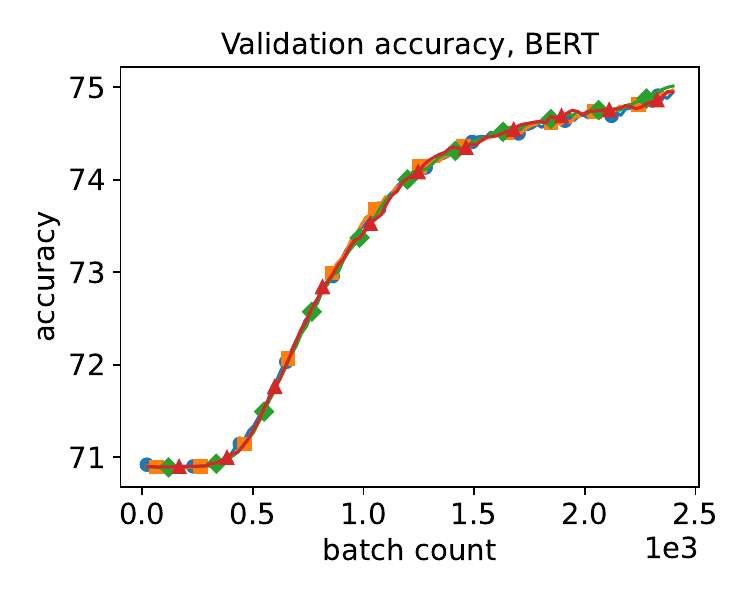}
  \caption{Train and validation loss + accuracy for \algname{clipped-SSTM} with different parameters. Here $\alpha_0 = 0.000125$, $bs$ means batch size. As we can see from the plots, increasing $\alpha$ 4 times and batch size 2 times almost does not affect the method's behavior.}
  \label{figure:losses_graph_bert_alpha-bs_relation}
\end{figure}

Theorem~\ref{thm:main_result_clipped_SSTM} explains this phenomenon in the convex case. For the case of $\nu = 1$ we have (from \eqref{eq:bathces_clipped_SSTM} and \eqref{eq:C_definition_clipped_SSTM}):
\begin{align*}
    \alpha \sim \frac{1}{aM_1},\quad \alpha_k \sim k\alpha,\quad m_k \sim \frac{N a \sigma^2 \alpha_{k+1}^2}{C^2R_0^2\ln \frac{4N}{\beta}},\quad N \sim \frac{a^{\frac{1}{2}}CR_0M_1^{\frac{1}{2}}}{\varepsilon^{\frac{1}{2}}}\sim \frac{CR_0}{\alpha^{\frac{1}{2}}\varepsilon^{\frac{1}{2}}},
\end{align*}
whence
\begin{align*}
    m_k \sim \frac{CR_0 a \sigma^2 \alpha^2(k+1)^2}{\alpha^{\frac{1}{2}}\varepsilon^{\frac{1}{2}}C^2R_0^2\ln \frac{4N}{\beta}} \sim \frac{\sigma^2 \alpha^2(k+1)^2}{\alpha^{\frac{1}{2}}\alpha M_1\varepsilon^{\frac{1}{2}}CR_0\ln \frac{4N}{\beta}}\sim \alpha^{\frac{1}{2}},
\end{align*}
where the dependencies on numerical constants and logarithmic factors are omitted. Therefore, the observed empirical relation between batch size ($m_k$) and $\alpha$ correlates well with the established theoretical results for \algname{clipped-SSTM}.

\subsubsection{Evolution of the Noise Distribution}\label{sec:extra_experiments_noise_distr_evolution}
In this section, we provide our empirical study of the noise distribution evolution along the trajectories of different optimizers. As one can see from the plots \revision{in Figures \ref{figure:bert_norm_diffs_evolution} and \ref{figure:resnet_18_norm_diffs_evolution}}, the noise distribution for {\tt ResNet-18} + {\tt ImageNet-100} task is always close to Gaussian distribution, whereas for  {\tt BERT} + {\tt CoLA} task it is significantly heavy-tailed.
\begin{figure}[h]
  \centering
  \includegraphics[scale=0.26]{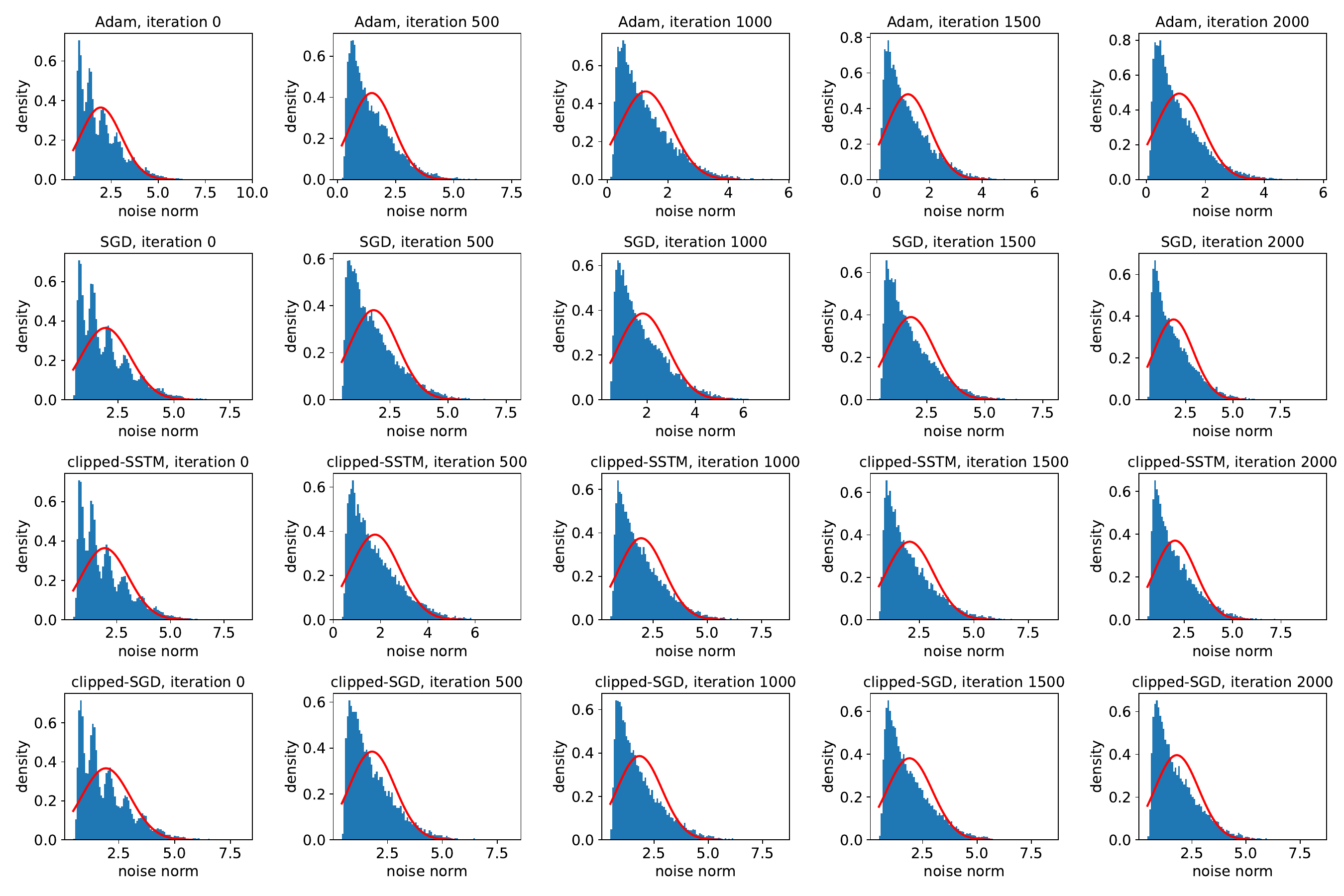}
  \caption{Evolution of the noise distribution for {\tt BERT} + {\tt CoLA} task.}
  \label{figure:bert_norm_diffs_evolution}
\end{figure}

\begin{figure}[h]
  \centering
  \includegraphics[scale=0.26]{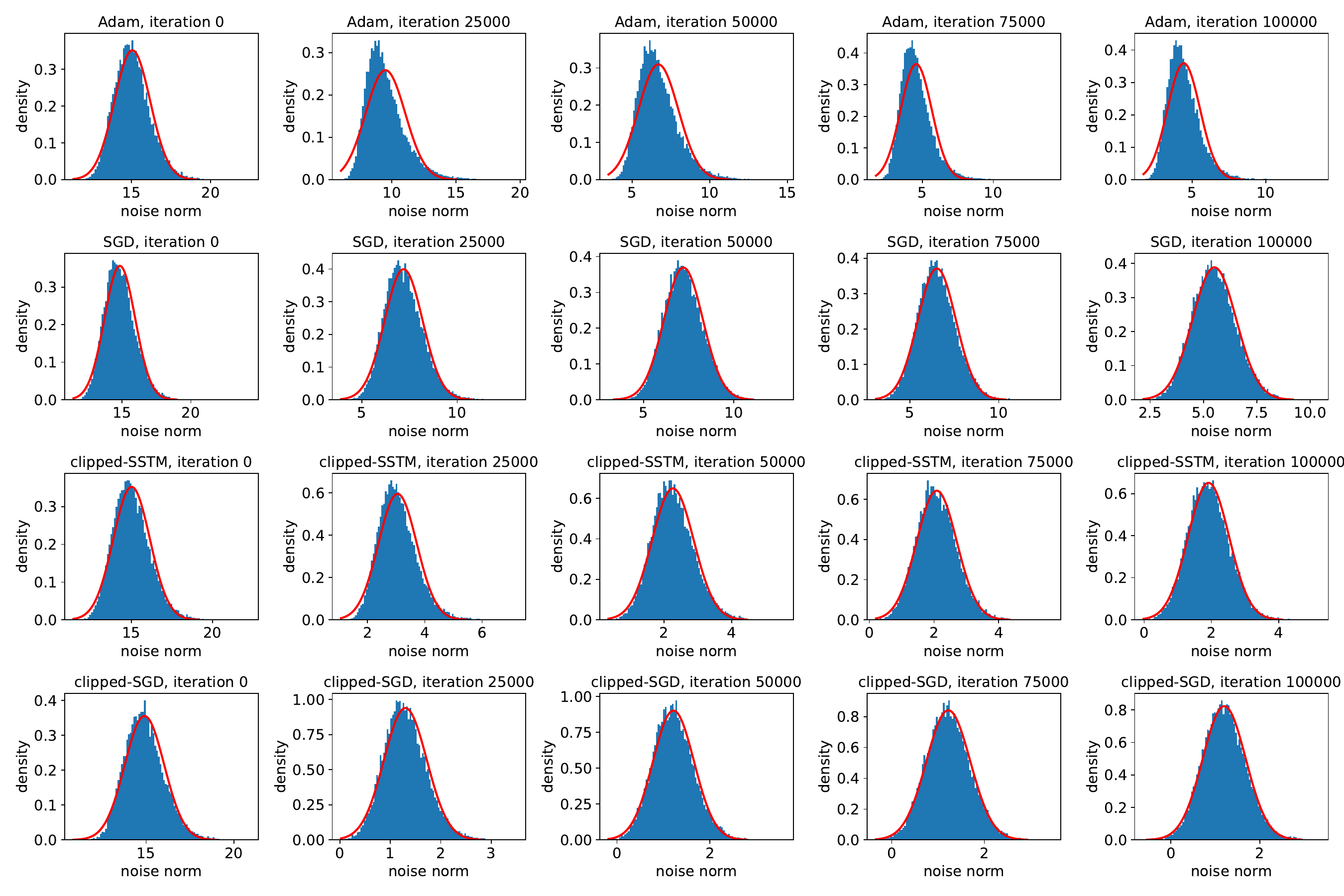}
  \caption{Evolution of the noise distribution for {\tt ResNet-18} + {\tt ImageNet-100} task.}
  \label{figure:resnet_18_norm_diffs_evolution}
\end{figure}

\subsubsection{Additional Results on NLP Tasks}\label{appendix:additional_NLP}

In addition to the previous experiments, we tried several different tasks from GLUE benchmark \citep{wang2018glue}: binary classification ({\tt SST-2}), paraphrase ({\tt MRPC}) and text similarity ({\tt STS-B}). Noise distributions for these tasks are represented in Figure~\ref{figure:norm_diffs_distribution_GLUE}. As for {\tt BERT} + {\tt CoLA}, the noise in the stochastic gradients is heavy-tailed.

In these additional tests, we also consider $\algname{\text{\sf clipped-SSTM}^{\ast}}$ -- a modification of \algname{clipped-SSTM} with linearly increasing batch size. This method provides faster convergence at early steps, as well as better results overall in comparison to \algname{clipped-SSTM}. Parameters for all methods were tuned and are reported below.

\begin{figure}[h]
  \centering
  \includegraphics[scale=0.25]{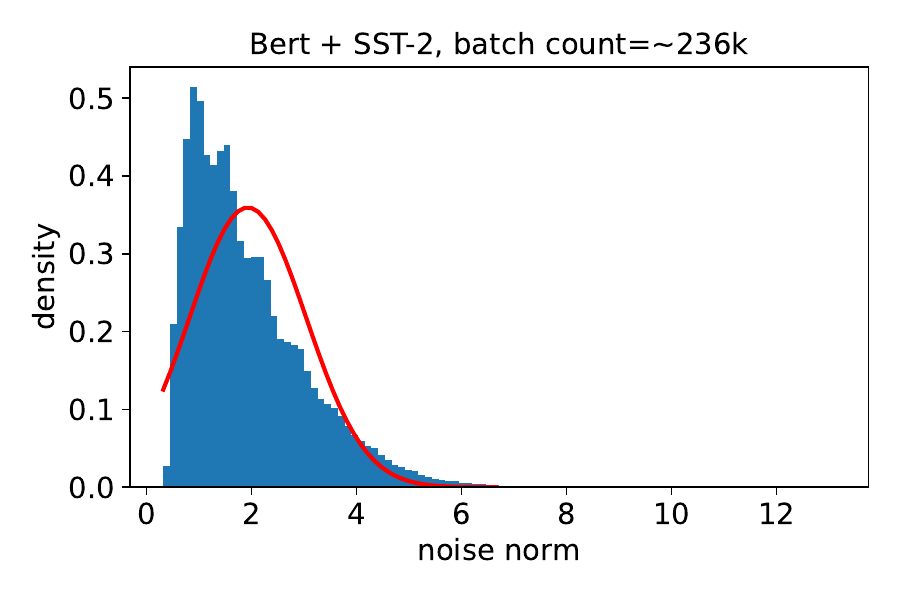}
  \includegraphics[scale=0.25]{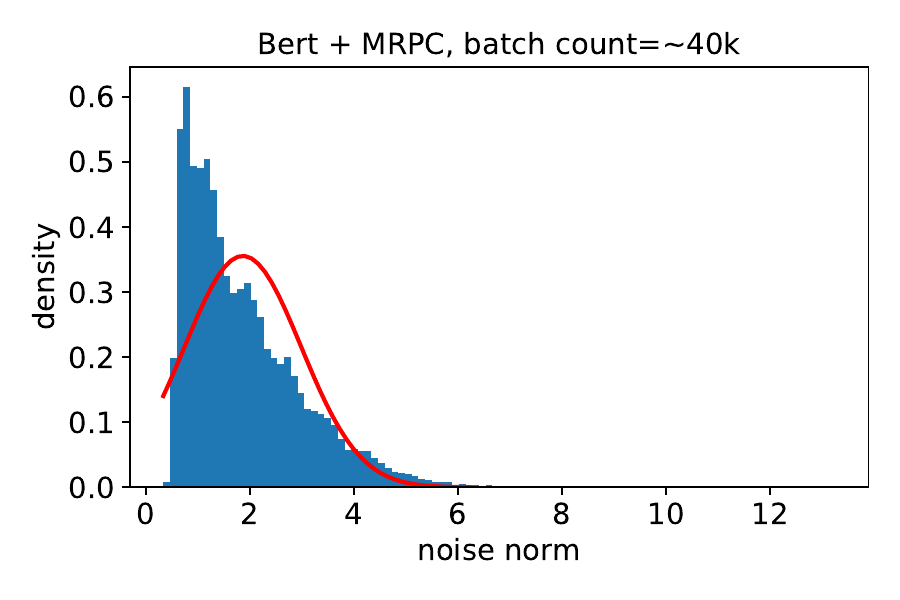}
  \includegraphics[scale=0.25]{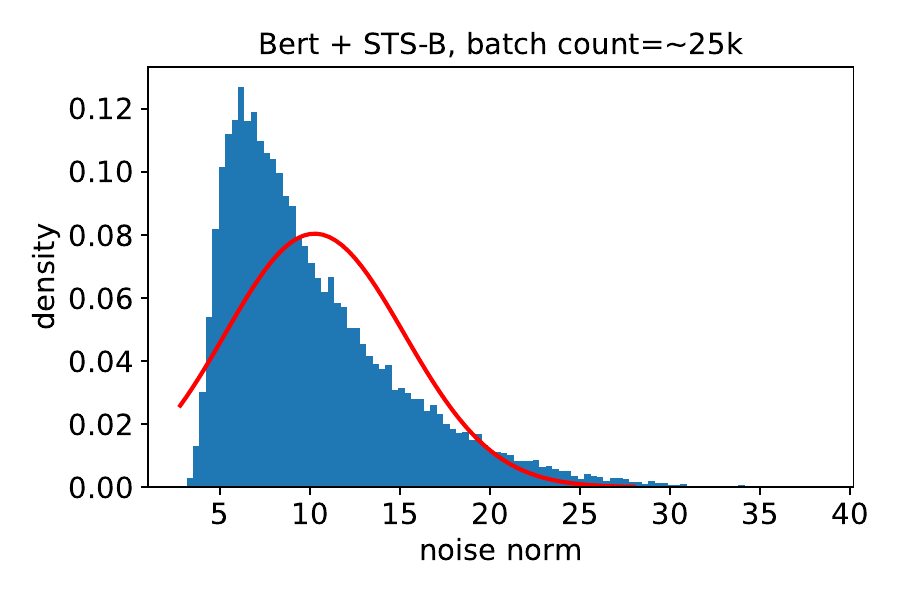}
  \caption{Noise distribution of the stochastic gradients for GLUE benchmark and {\tt BERT}  before the training. Red lines: probability density functions with means and variances empirically estimated by the samples. Batch count is the total number of samples used to build a histogram.}
  \label{figure:norm_diffs_distribution_GLUE}
\end{figure}

\paragraph{Text Classification, SST-2.} For {\tt BERT} + {\tt SST-2} the parameters of the methods were chosen as follows:
\begin{itemize}
    \item \algname{Adam}: $lr=5e-4$ and a batch size of $32$
    \item \algname{SGD}: $lr=1e-3$, $momentum=0.9$ and a batch size of $32$
    \item \algname{clipped-SGD}: $lr=2e-3$, $momentum=0.9$, coordinate-wise clipping with clipping parameter $B=0.1$ and a batch size of $32$
    \item \algname{clipped-SSTM}: $\nu = 1$, stepsize parameter $\alpha = 1e-3$, norm clipping with clipping parameter $B=1$ and a batch size of $8\times 32$
    \item $\algname{\text{\sf clipped-SSTM}^{\ast}}$: $\nu = 1$, stepsize parameter $\alpha = 1e-3$, norm clipping with clipping parameter $B=1$ and a batch size of $k\times 32$ where $k = 1 + \lfloor\frac{batch~count}{400}\rfloor$
\end{itemize}

The results are represented in Figure~\ref{figure:losses_graph_bert_sst_2}. 

\begin{figure}[h]
  \centering
  \includegraphics[scale=0.3]{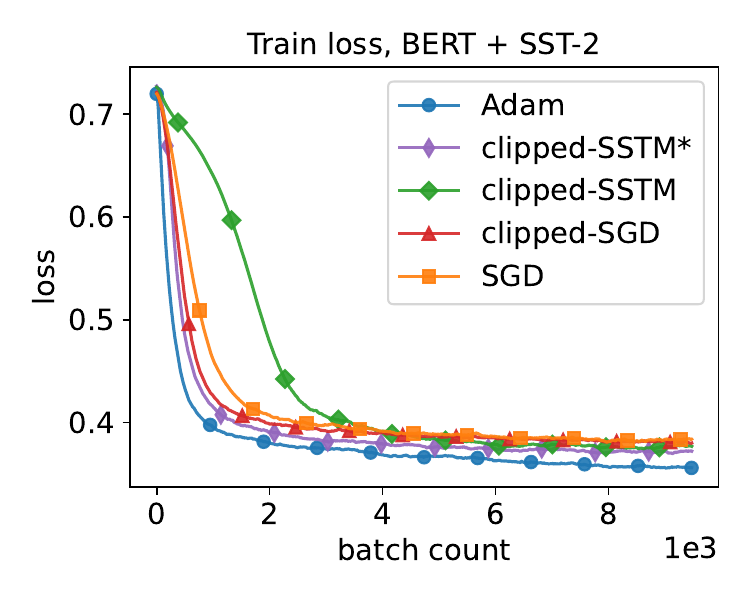}
  \includegraphics[scale=0.3]{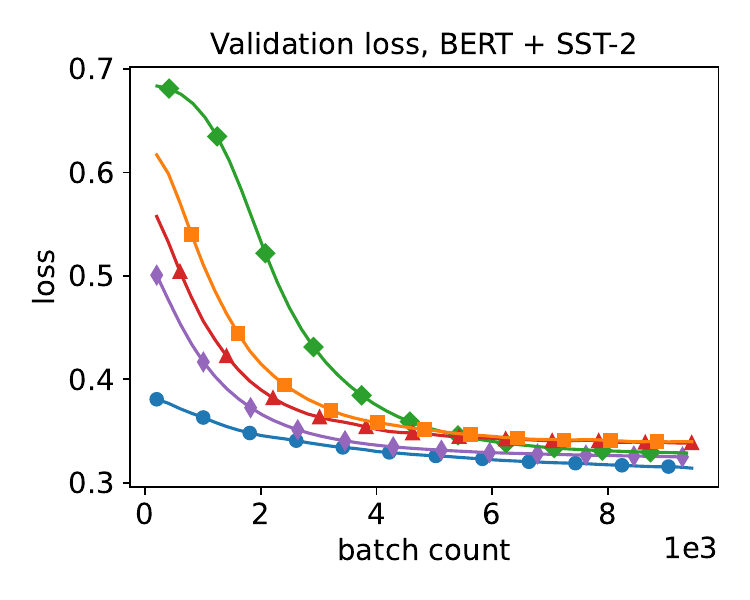}
  \includegraphics[scale=0.3]{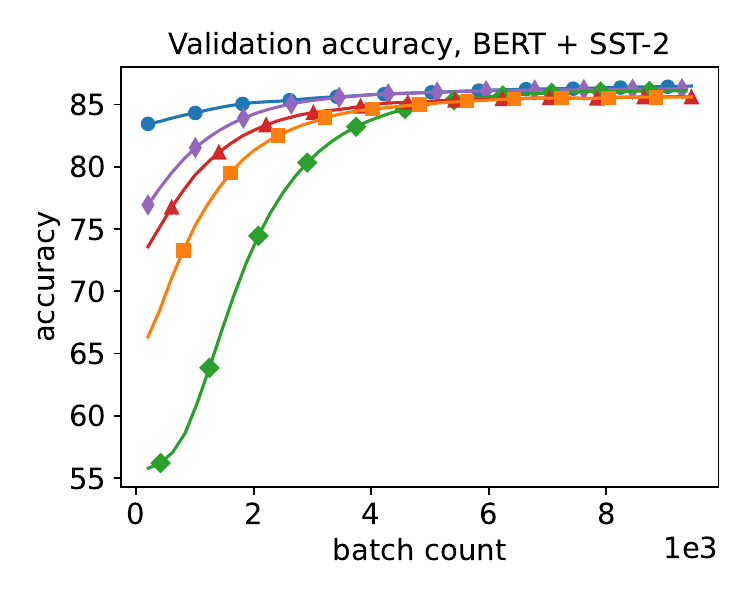}
  \caption{Train and validation loss + accuracy for different optimizers for {\tt BERT} + {\tt SST-2} task.}
  \label{figure:losses_graph_bert_sst_2}
\end{figure}

\paragraph{Paraphrase, MRPC.} For {\tt BERT} + {\tt MRPC} the parameters of the methods were chosen as follows:
\begin{itemize}
    \item \algname{Adam}: $lr=5e-4$ and a batch size of $32$
    \item \algname{SGD}: $lr=3e-4$, $momentum=0.99$ and a batch size of $32$
    \item \algname{clipped-SGD}: $lr=1e-3$, $momentum=0.95$, coordinate-wise clipping with clipping parameter $B=0.1$ and a batch size of $32$
    \item \algname{clipped-SSTM}: $\nu = 1$, stepsize parameter $\alpha = 3e-3$, norm clipping with clipping parameter $B=1$ and a batch size of $4\times 32$
    \item $\algname{\text{\sf clipped-SSTM}^{\ast}}$: $\nu = 1$, stepsize parameter $\alpha = 1e-2$, norm clipping with clipping parameter $B=1$ and a batch size of $k\times 32$ where $k = 1 + \lfloor\frac{batch~count}{100}\rfloor$
\end{itemize}

The results are represented in Figure~\ref{figure:losses_graph_bert_mrpc}. 

\begin{figure}[h]
  \centering
  \includegraphics[scale=0.3]{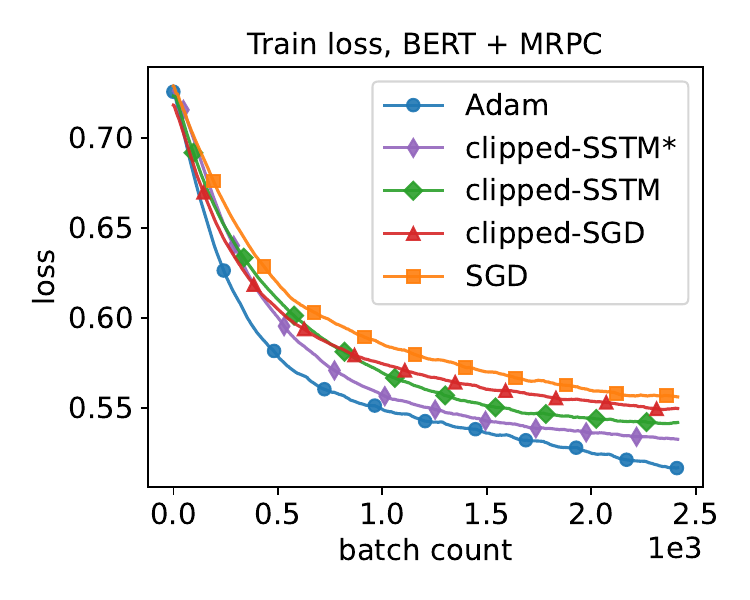}
  \includegraphics[scale=0.3]{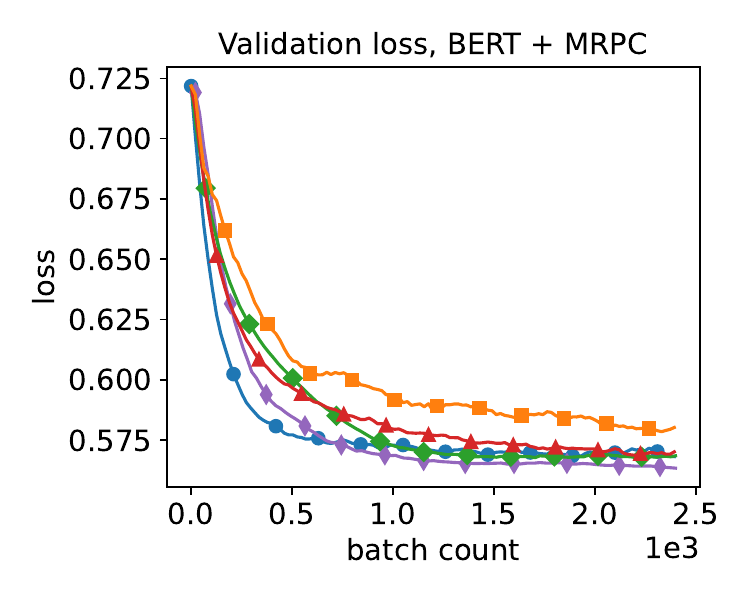}
  \includegraphics[scale=0.3]{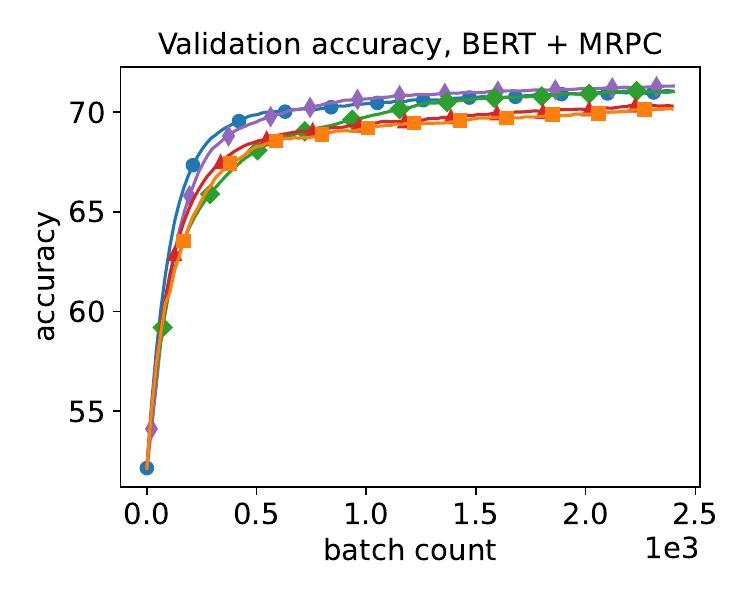}
  \caption{Train and validation loss + accuracy for different optimizers for {\tt BERT} + {\tt MRPC} task.}
  \label{figure:losses_graph_bert_mrpc}
\end{figure}

\paragraph{Text similarity, STS-B.} For {\tt BERT} + {\tt STS-B} the parameters of the methods were chosen as follows:
\begin{itemize}
    \item \algname{Adam}: $lr=1e-3$ and a batch size of $32$
    \item \algname{SGD}: $lr=1e-4$, $momentum=0.99$ and a batch size of $32$
    \item \algname{clipped-SGD}: $lr=1e-3$, $momentum=0.995$, coordinate-wise clipping with clipping parameter $B=0.1$ and a batch size of $32$
    \item \algname{clipped-SSTM}: $\nu = 1$, stepsize parameter $\alpha = 1e-2$, norm clipping with clipping parameter $B=1$ and a batch size of $8\times 32$
    \item $\algname{\text{\sf clipped-SSTM}^{\ast}}$: $\nu = 1$, stepsize parameter $\alpha = 3e-3$, norm clipping with clipping parameter $B=1$ and a batch size of $k\times 32$ where $k = 1 + \lfloor\frac{batch~count}{200}\rfloor$
\end{itemize}

The results are represented in Figure~\ref{figure:losses_graph_bert_sts_b}. 

\begin{figure}[h]
  \centering
  \includegraphics[scale=0.3]{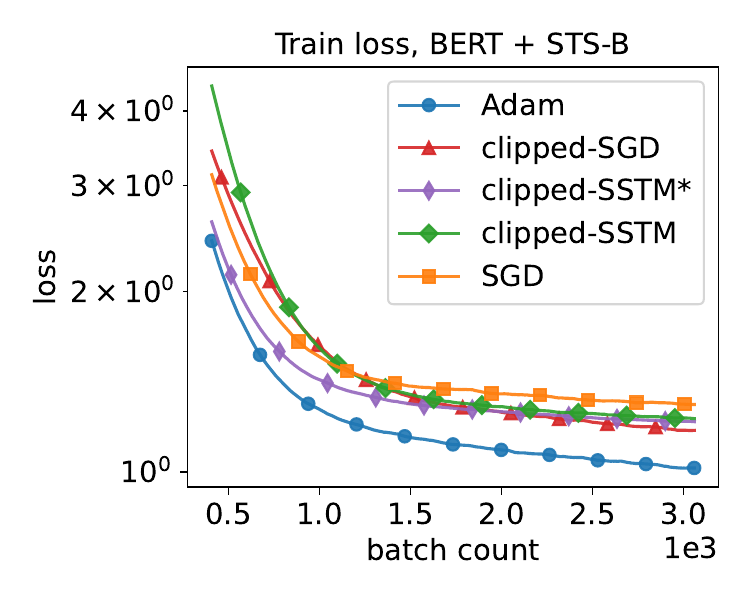}
  \includegraphics[scale=0.3]{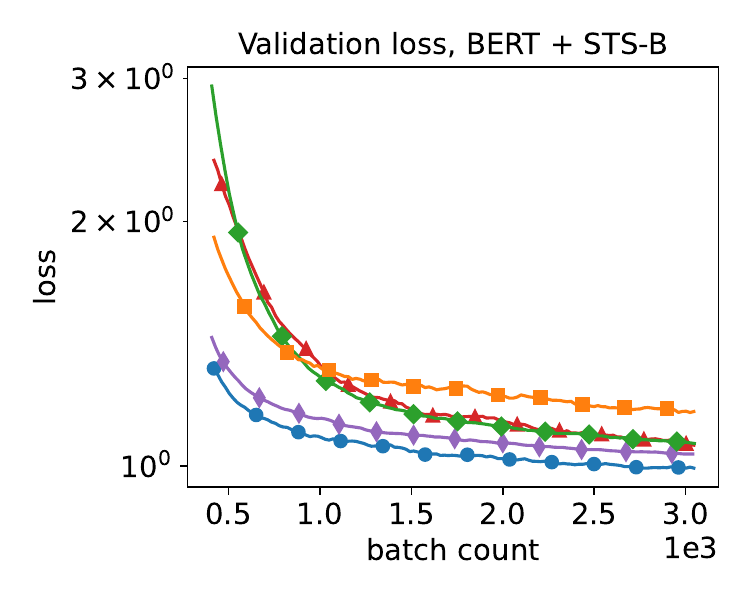}
  \includegraphics[scale=0.3]{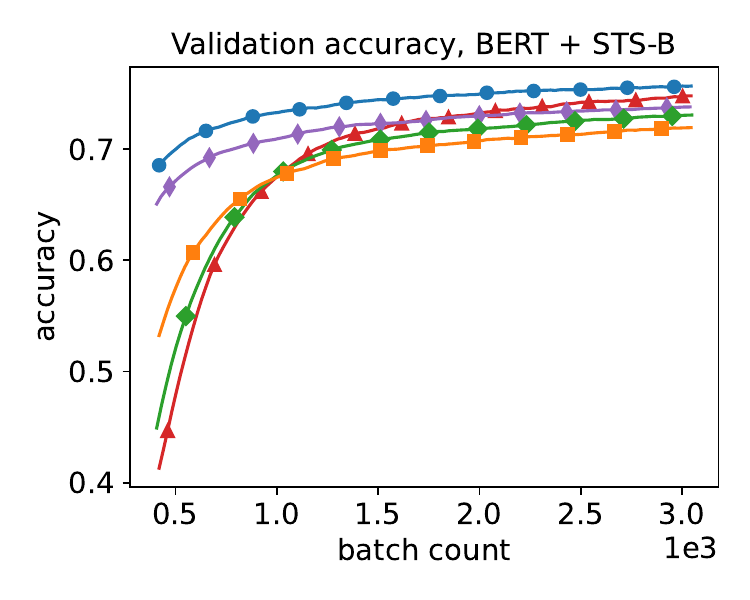}
  \caption{Train and validation loss + accuracy for different optimizers for {\tt BERT} + {\tt STS-B} task.}
  \label{figure:losses_graph_bert_sts_b}
\end{figure}

As for {\tt BERT} + {\tt CoLA}, we see that methods with clipping are superior to \algname{SGD}. This is expected in view of the histograms reported in Figure~\ref{figure:norm_diffs_distribution_GLUE} and previous empirical studies. We also point out that \algname{clipped-SSTM}/\algname{clipped-SSTM}$^{\ast}$/\algname{clipped-SGD} achieve either comparable or even better validation accuracy than \algname{Adam}.

% %References
% \begin{thebibliography}{plain}

% % References must be listed in alphabetical order.

% % Format for journal articles
% \bibitem{Huang} Huang, H.Y.: Unified approach to quadratically convergent algorithms for function minimization. J. Optim. Theory Appl. 5, 354-405 (2009)

% %Format for edited books
% \bibitem{Miele} Miele, A. (ed.): Theory of Optimum Aerodynamic Shapes. Academic Press, New York (1965)

% %Format for books
% \bibitem{Nocedal}  Nocedal, J., Wright, S.J.: Numerical Optimization, Springer, New York  (2000)

% %Format for articles in edited books
% \bibitem{Ralston} Ralston, A.: Numerical integration methods for the solution of ordinary differential equations. In: Ralston, A., Wilf, H.S. (eds.): Mathematical Methods for Digital Computers, vol. 1, pp. 95-109. Wiley, New York (1960)

% % Format for reports
% \bibitem{Yang} Yang, T. L.: Optimal control for a rocket in a three-dimensional central force field. Technical Memorandum TM 69-2011-2, Bellcomm (1969)

% \end{thebibliography}

\bibliography{refs1}

\end{document}